\theoremstyle{plain}
\newtheorem{theorem}{Theorem}[section]
\newtheorem{lemma}[theorem]{Lemma}
\newtheorem{corollary}[theorem]{Corollary}
\newtheorem{proposition}[theorem]{Proposition}
\theoremstyle{remark}
\newtheorem{definition}[theorem]{Definition}
\newtheorem{remark}[theorem]{Remark}
\newcommand{\discmu}{{\bar{\mu}}}
\newcommand{\discsigma}{\mathcal{F}}
\newcommand{\overcirc}{\accentset{\circ}}
\newcommand{\extmu}{{\tilde{\mu}}}
\newcommand{\gradsigma}{\mathcal{E}}
\renewcommand{\d}{\mathrm{d}}
\renewcommand{\k}{\kappa}
\renewcommand{\t}{\mathbf{t}}
\newcommand{\R}{{\mathbb{R}}}
\renewcommand{\P}{{\mathbb{P}}}
\newcommand{\E}{{\mathbb{E}}}
\newcommand{\Z}{{\mathbb{Z}}}
\newcommand{\bs}{\boldsymbol}
\newcommand{\p}{\varphi}
\newcommand{\crm}{\mathrm{c}}
\newcommand{\Edge}{\mathbf{E}}
\newcommand{\Vertex}{\mathbf{V}}
\newcommand{\1}{\mathds{1}}
\newcommand{\vertiii}[1]{{\left\vert\kern-0.25ex\left\vert\kern-0.25ex\left\vert
#1 
    \right\vert\kern-0.25ex\right\vert\kern-0.25ex\right\vert}}
\newcommand{\Acal}{\mathcal{A}}
\newcommand{\Bcal}{\mathcal{B}}
\newcommand{\Dcal}{\mathcal{D}}
\newcommand{\Mcal}{\mathcal{M}}
\begin{document}

\title{Aizenman-Wehr argument 
for a class of disordered gradient models}
\author{Simon Buchholz 
\footnote{Max Planck Institute for Intelligent Systems,
T\"ubingen, sbuchholz@tue.mpg.de}
 \and Codina Cotar\footnote{Department of Statistical Science, University College London, c.cotar@ucl.ac.uk}
}
\date{\today}
\maketitle
\begin{abstract}
We consider random gradient fields with disorder where the interaction potential $V_e$ on an edge $e$ 
can be expressed as
\begin{align*}
e^{-V_e(s)} = \int \rho(\d\kappa)\, e^{-\kappa \xi_e} e^{-\frac{\kappa s^2}{2}}.
\end{align*}
Here $\rho$ denotes a measure with compact support in $(0,\infty)$ and
$\xi_e\in\R$ a nontrivial edge dependent disorder. We show that in dimension $d=2$
there is a unique shift covariant disordered gradient Gibbs measure such that the annealed measure is ergodic and has zero tilt. 
This shows that the phase transitions known to occur for this class of potential do not persist to the  disordered setting. 
The proof relies on the connection of the gradient Gibbs measures to  a random conductance model with compact state space, to which the well known Aizenman-Wehr argument applies.
\end{abstract}

\section{Introduction}\label{sec:introduction}
Gradient fields are statistical mechanics models that can be used to model phase separation 
or, in the case of vector valued fields, solid materials. 
Formally they can be defined as a random field $(\p_x)_{x\in \Z^d}\in \R^{\Z^d}$ with the Gibbs distribution
\begin{align}\label{eq:formal_grad_int_def}
\frac{\exp\left(-\sum_{x\sim y} V(\p(x)-\p(y))\right)}{Z}\prod_{x\in\Z^d} \d\p(x).
\end{align}
Here $\d\p(x)$ denotes the Lebesgue measure, $V:\R\to\R$ is a measurable symmetric potential, and $\sim$ indicates the neighborhood relation for $\Z^d$.
We can give a meaning to the formal expression \eqref{eq:formal_grad_int_def}
using the DLR-formalism. 
In the setting of gradient interface models no Gibbs measure exists in dimension $d\leq 2$.
Therefore one often considers gradient Gibbs measures which exist in all dimensions.
This means that attention is restricted to the $\sigma$-algebra generated by the gradient fields
\begin{align}
\eta_{xy}=\p(y)-\p(x) \quad \text{for $x\sim y$.}
\end{align} 
  
In the case where the potential $V$ satisfies $c_1\leq V''(s)\leq c_2$ for some $0<c_1<c_2$ and all $x\in \R$, in particular $V$ is uniformly convex, gradient fields are quite well understood. One important result shown in \cite{MR1463032} is that the tilt
is in bijection with the ergodic gradient Gibbs measures where the tilt $u\in \R^d$ of a shift invariant gradient measure $\mu$ is defined as
\begin{align}
\E_\mu(\nabla \p(x))=u
\end{align}
where $\nabla\p(x)\in\R^d$ denotes the discrete derivative, i.e., the vector with entries $\nabla_i\p(x)=\p(x+e_i)-\p(x)$.
For non-convex potentials much less is known. Exceptions are results that show that for very high temperatures the model behaves essentially as in the convex case
\cite{Cotar2009, Cotar2012, MR3913274} and partial results in this direction for very small temperatures \cite{adams2016strict, adams2019strict}. 
 For intermediate temperatures no general results are known and all results
 are restricted to the class of potentials $V:\R\to\R$ that admit the representation
 \begin{align}\label{eq:defofpot}
e^{-V(s)} = \int \rho(\d\kappa)\, e^{-\frac{\kappa s^2}{2}}
 \end{align}
for some measure $\rho$ with support in $(0,\infty)$.
In particular, in \cite{MR2778801}, where this class was introduced 
it was shown that for zero-tilt and suitable $\rho$ there exist two ergodic gradient Gibbs measures. This establishes that new phenomena arise for non-convex potentials.
The main goal of this note is to study the same class of potentials with an additional disorder.

Disordered gradient interface models were introduced in \cite{MR2985173}
where existence of disordered gradient Gibbs measure was shown in a rather general setting. Moreover, these results were extended in \cite{MR3383338} to show uniqueness of the disordered gradient Gibbs measures for strictly convex potentials.
Here we extend the uniqueness results to certain disordered non-convex potentials.

Disordered models have been studied in many contexts in statistical mechanics, e.g., in the context of the random field Ising model or spin glasses (see \cite{MR2252929} for some general background). 
Imry and Ma predicted in \cite{Imry:1975zz} that phase transitions do not occur in disordered models in dimension 2. Their argument is based on the heuristic that the effect of a pointwise  disorder on a volume $\Lambda$ is governed by the central limit theorem hence of order $\sqrt{|\Lambda|}$ while
the boundary effect is of order $|\partial \Lambda|$. In dimension 2 they both scale in the same way and they argued that the disorder dominates. 
Their reasoning was made rigorous by Aizenman and Wehr in \cite{MR1060388} where they show for a large class of models (in particular the random field Ising model) that there is a unique Gibbs state for almost all realizations of the disorder. In general, their argument does not apply to models with unbounded state space, e.g., gradient models. 
Moreover there are several technical difficulties in the absence of correlation inequalities which do not hold for gradient models with non-convex interactions.
Here we nevertheless extend the result to certain gradient models. The main ingredient in our proof is that for potentials as in \eqref{eq:defofpot} the
aforementioned difficulties can be avoided by coupling the gradient model to a random conductance model with compact state space. 
This was already remarked in \cite{MR2778801} and further investigated 
in \cite{phasetransitionclass} where strong correlation inequalities for the model were shown. In this work we extend those results to the disordered setting and then show that the Aizenman-Wehr result can be applied to the disordered random conductance model   with small modifications. This result can be lifted to  conclude uniqueness also for the gradient Gibbs measures.

The rest of this article is structured as follows. In Section \ref{sec:gradGibbs}
we introduce disordered gradient Gibbs measures and state our main result, the uniqueness of disordered gradient Gibbs measures for potentials as in
\eqref{eq:defofpot} in dimension $d=2$. Then, in Section \ref{sec:RC} we introduce the random conductance model and prove correlation inequalities.
The next section contains the proof of the uniqueness result for the random 
conductance model.
Finally, in the slightly technical Section \ref{sec:GMRC}, we relate 
the gradient Gibbs measures to the random conductance model
and finish the proof of our main result.


\section{Gradient Gibbs measures with disorder}\label{sec:gradGibbs}
In this section we recall the notion of gradient Gibbs measure with disorder
that was introduced in \cite{MR2985173}. This generalizes the notion
of gradient Gibbs measures and we refer to  \cite{MR2807681, MR2251117} for general background on gradient Gibbs measures.
We will also state our main results for a special class of gradient models.
Our presentation follows \cite{MR3383338} and in fact our 
disordered gradient model can be stated in terms of their 
model B. However, their uniqueness results do not apply in our setting because
the involved potentials are not convex. 

We start with some general notation.
In this work we consider real valued random fields that are indexed by a subset $\Lambda \subset \Z^d$. We  denote the set of nearest neighbor bonds of $\Z^d$ by $\Edge(\Z^d)$. 
To consider gradient fields it is useful to choose on orientation of the edges.
We orient the edges $e=\{x,y\}\in \Edge(\Z^d)$
from $x$ to $y$ iff $x\leq y$ (coordinate-wise), i.e., we can view the graph $(\Z^d,\Edge(\Z^d))$ as a directed graph but
mostly we work with the undirected graph.

We associate to a random field $\p:\Z^d\to \R$  the 
gradient field $\eta=\nabla \p\in \R^{\Edge(\Z^d)}$ given by $\eta_{e}=\p_y-\p_x$ if $\{x,y\}\in \Edge(\Z^d)$
are nearest neighbors and $x\leq y$. We formally write $\eta_{x,y}=\eta_e=\p_y-\p_x$
and $\eta_{y,x}=-\eta_e=\p_x-\p_y$.
The gradient field $\eta$ 
 satisfies the plaquette condition
\begin{align}\label{eq:plaquette}
\eta_{x_1,x_2}+\eta_{x_2,x_3}+\eta_{x_3,x_4}+\eta_{x_4,x_1}=0
\end{align}
for every plaquette, i.e., nearest neighbors $x_1,x_2,x_3,x_4,x_1$.
Vice versa, given a field $\eta\in\R^{\Edge(\Z^d)}$ that satisfies the plaquette condition 
there is a up to constant shifts a unique field $\p$ such that $\eta=\nabla \p$ (the antisymmetry
of the gradient field is contained in our definition).
We will refer to those fields as gradient fields and denote them by
$\R^{\Edge(\Z^d)}_g$.
It is sometimes convenient to identify $\R^{\Edge(\Z^d)}_g$ with the
space of fields $\R^{\Z^d}$ such that $\p(0)=0$.

To simplify the notation we write $\p_\Lambda$ for $\Lambda\subset \Z^d$ and $\eta_{E}$
for $E\subset \Edge(\Z^d)$
for the the restriction of fields and gradient fields.

Later it will be convenient to work with general connected graphs $G=(V,E)$ and 
we will use the notation $V=\Vertex(G)$ and $E=\Edge(G)$ for the vertices and edges of the graph. 
 We identify a subset   $\Lambda\subset \Z^d$ with the graph generated by it and
therefore we write  $\Edge(\Lambda)$ for the bonds with both endpoints in
$\Lambda$.

For a subset $H\subset \Z^d$ we write $\partial H$ for the (inner) boundary of $H$ consisting of all points
$x\in \Vertex(H)$ such that there is an edge $e=\{x,y\}\in \Edge(\Z^d)\setminus \Edge(H)$.
In the case of a graph generated by $\Lambda\subset \Edge(\Z^d)$ we have $x\in \partial\Lambda$ if
there is $y\in \Lambda^{\crm}$ such that $\{x,y\}\in \Edge(\Z^d)$.
We define $\accentset{\circ} \Lambda=\Lambda\setminus \partial\Lambda$. 
For a finite subset $\Lambda\subset \Z^d$ we denote by $\d\p_\Lambda$ the Lebesgue measure on $\R^\Lambda$. 
We now define finite volume disordered Gibbs measures.
We consider a probability space $(\Omega, \Bcal, \P)$ for the disorder.
In the following expectations $\E$ will always be with respect to $\P$ 
and allows us to pass from quenched to annealed quantities. Other expectations that appear
will be denoted by integrals or by the action of measures.
 We assume that there is for each $e\in \Edge(\Z^d)$ a measurable map $V^\xi_e (s):\Omega\times \R\to \R$.
We assume that $V^\xi_e$ is even and satisfies $V^\xi_e(s)\geq As^2 + B$ for all $\xi\in \Omega$, $e\in \Edge(\Z^d)$, and $s\in \R$ for some constants $A>0$ and $B\in \R$. 
The relevant energy
is given by the disordered Hamiltonian 
\begin{align}
H_\Lambda[\xi](\p)=  \sum_{e\in \Edge(\Lambda)}  V^\xi_e((\nabla\p)_e).
\end{align}
\begin{definition}[Finite volume $\p$-Gibbs measure]
For $\psi:\Z^d\to \R$ we define the finite volume Gibbs measure $\nu_\Lambda^\psi[\xi](\d \p)$ on 
$\R^{\Z^d}$ by
\begin{align}
\nu_\Lambda^\psi[\xi](\d\p)
=
\frac{1}{Z_\Lambda^\psi[\xi]} e^{-H_\Lambda[\xi](\p)} \prod_{x\in \overcirc{\Lambda}^\crm} \delta_{\psi(x)}(\d \p(x)) \;\d\p_{\overcirc{\Lambda}}.
\end{align}
Here $Z_\Lambda^\psi[\xi]$ denotes the partition function that turns the measure into a probability measure. Those expressions are well-defined under the assumptions stated above. 
\end{definition}
We define infinite volume Gibbs measures using the DLR-condition. 
\begin{definition}[$\p$-Gibbs measure on $\Z^d$]
A probability measure $\nu[\xi]$ on $\R^{\Z^d}$ is a Gibbs measure for the disorder $\xi$ if it satisfies the DLR equation
\begin{align}
\int \nu[\xi](\d \psi) \int \nu^\psi_\Lambda[\xi](\d \p)\, F(\p)
=\int \nu[\xi](\d\p) \, F(\p)
\end{align}
for every finite $\Lambda\subset \Z^d$ and for all $F\in C_\mathrm{b}(\R^{\Z^d})$.
\end{definition}
It is well known that shift invariant Gibbs measures do not exist in dimension $d\leq 2$. Therefore one often restricts attention to gradient Gibbs measures which are measures on $\R^{\Edge(\Z^d)}_g$. This is also necessary if one tries to model tilted surfaces with $u\neq 0$.
We define gradient Gibbs measures in finite volume as follows.
\begin{definition}[Finite volume $\nabla\p$-Gibbs measure]
The finite volume gradient Gibbs measure $\mu_\Lambda^\sigma[\xi]$ 
for $\sigma \in \R_g^{\Edge(\Z^d)}$ and given disorder $\xi\in \Omega$ is the unique probability measure on $\R_g^{\Edge(\Z^d)}$ that satisfies for all $F\in C_b(\R_g^{\Edge(\Z^d)})$ 
\begin{align}
\label{finitevolgraddef}
\int \mu_\Lambda^\sigma[\xi](\d\eta)F(\eta)=\int \nu_\Lambda^\psi[\xi](\d\p)\,F(\nabla\p)
\end{align} 
where $\psi:\Z^d\to\R$ satisfies $\nabla \psi=\sigma$.
\end{definition}
It is easy to see that the definition above is independent of the choice of $\psi$. Indeed, $\psi$ is a unique up to a global shift that does not influence the gradient distribution $\nabla \p$ under $\nu_\Lambda^\psi[\xi]$.
The extension to infinite volume is now similar to the extension for usual Gibbs-measures.
\begin{definition}[$\nabla\p$-Gibbs measure on $\Edge(\Z^d)$]
A probability measure $\mu[\xi]$ on $\R^{\Edge(\Z^d)}_g$ is a gradient Gibbs measure for the disorder $\xi\in\Omega$ if it satisfies the DLR equation
\begin{align}
\int \mu[\xi](\d \sigma) \int \mu^\sigma_\Lambda[\xi](\d \eta)\, F(\eta)
=\int \mu[\xi](\d\eta) \, F(\eta)
\end{align}
for every finite $\Lambda\subset \Z^d$ and for all $F\in C_\mathrm{b}(\R^{\Edge(\Z^d)}_g)$.
\end{definition}
We now introduce the concept of shift covariance which is the natural analogue of shift invariance in the disordered setting. First we consider the case without disorder.
For $a\in \Z^d$ we define the shifts $\tau_a:\Z^d\to \Z^d$ and $\tau_a:\Edge(\Z^d)\to \Edge(\Z^d)$ (we always use the same symbol $\tau_a$ for shifts)  by
\begin{align}\label{eq:shift_Zd}
\tau_a x=x+a, \quad \tau_a (x,y)=(x+a,y+a).
\end{align}
Moreover we consider the extension $\tau_a:\R^{\Edge(\Z^d)}\to \R^{\Edge(\Z^d)}$  to gradient fields that is defined by 
\begin{align}\label{eq:shift_fields}
(\tau_a\eta)_{x,y}=\eta_{\tau_a^{-1}(x,y)}=\eta_{x-a,y-a}.
\end{align} 
  A measure $\mu$ is shift invariant if $\mu (\tau_a^{-1}(A))=\mu(A)$ for all $a\in\Z^d$ and $A\in \mathcal{B}(\R)^{\Edge(\Z^d)}$.
An event is shift invariant if $\tau_a(A)=A$ for all $a\in \Z^d$.
A shift invariant gradient measure is ergodic if $\mu(A)\in \{0,1\}$ for all shift invariant $A$.

We assume that there is an action also labelled by $\tau_a$ on $\Omega$ such that
\begin{align}\label{eq:shift_covariance_disorder}
V_{\tau_a e}^{\tau_a \xi}=V_e^\xi
\end{align}
 and that $\P$ is invariant under $\tau_a$.
\begin{definition}[Shift covariant gradient Gibbs measure]
A measurable map $\xi\to \mu[\xi]$ is a shift covariant gradient Gibbs measure if, for almost every $\xi$, $\mu[\xi]$ is a gradient Gibbs measure and 
\begin{align}
\int \mu[\tau_a \xi](\d\eta) \, F(\eta) =\int \mu[\xi](\d\eta) F(\tau_a\eta)
\end{align}
for all $a\in \Z^d$ and $F\in C_b(\R_g^{\Edge(\Z^d)})$.
\end{definition}
  We define the tilt $u\in \R^d$ of a shift covariant gradient Gibbs measure as the tilt of the annealed measure, i.e.,
  \begin{align}
  u_i = \mathbb{E}\left(\int \mu[\xi](\d\eta) \eta_{0,e_i}\right).
  \end{align}

  We now specialize to the setting we will consider in the following.  
  For simplicity we assume that $\Omega = \R^{\Edge(\Z^d)}$ and we assume that
  if $\xi\in \Omega$ is distributed according to $\P$ the coordinates $\xi_e$ and $\xi_f$ for $e\neq f \in \Edge(\Z^d)$ are i.i.d.\ random variables, in particular $\P$ is a product measure and $\P$ is shift invariant.   
  The action of the shift on $\Omega$ is defined similarly to the definition for gradient fields by 
  \begin{align}\label{eq:shift_disorder}
  (\tau_a\xi)_e=\xi_{\tau_a^{-1}e}.
  \end{align} 
We now fix a Borel measure $\rho$ on $\R_+$ with unit mass and we assume that there is a constant $\lambda>0$ such that $\mathop{supp} \rho \subset [\lambda^{-1},\lambda]$.  The disordered potential $V^\xi_e$ is defined by
\begin{align}\label{eq:defofpotdisordered}
e^{-V_e^\xi(s)} = \int_{\R_+} \rho(\d\kappa)\, e^{-\frac{\kappa s^2}{2}}e^{\xi_e\kappa}.
\end{align}
Using \eqref{eq:shift_Zd} and \eqref{eq:shift_disorder} this implies
$V^{\tau_a\xi}_{\tau_a e}=V_{a+e}^{ \xi_{\cdot-a}}=V_e^\xi$ and therefore the disorder satisfies \eqref{eq:shift_covariance_disorder}.
We can now state our main theorem.
\begin{theorem}\label{th:main}
Let $d=2$.
Assume that $\E(e^{t|\xi_e|})<\infty$ for all $t\in \R$ and
assume that the distribution of $\xi_e$ is not concentrated on a single point. Then
there exists for $V^\xi$ as above a unique shift covariant gradient Gibbs measure $\xi\to \mu[\xi]$ 
such that the annealed measure $\E(\mu[\xi])$ is ergodic and has zero tilt and satisfies 
\begin{align}\label{eq:cond_moment}
\E\left(\mu[\xi](|\eta_e|^{2+\varepsilon})\right)<\infty
\end{align}
for some $\varepsilon>0$ and all $e\in \Edge(\Z^2)$.
\end{theorem}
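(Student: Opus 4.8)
The plan is to follow the route announced in the introduction: couple the gradient model to an auxiliary random conductance model with compact single-edge state space, prove uniqueness there by an Aizenman–Wehr argument, and transfer the conclusion back. First I would make the representation \eqref{eq:defofpotdisordered} explicit at the level of measures. For finite $\Lambda$ I introduce, for each edge $e$, an auxiliary conductance $\kappa_e\in[\lambda^{-1},\lambda]$ and form the joint measure on $(\p,\kappa)$ proportional to $\prod_{e\in\Edge(\Lambda)}\rho(\d\kappa_e)\,e^{\xi_e\kappa_e}\,e^{-\kappa_e(\nabla\p)_e^2/2}\,\d\p_{\overcirc\Lambda}$. Integrating out $\p$ leaves a measure on the conductances alone, the weight being a ratio of Gaussian partition functions, i.e.\ a determinant of a weighted graph Laplacian; this is the random conductance model of Section~\ref{sec:RC}, and integrating out $\kappa$ returns the original $\p$-Gibbs measure. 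Crucially the conductances live in the compact box $[\lambda^{-1},\lambda]^{\Edge(\Z^2)}$, which is exactly what the Aizenman–Wehr machinery requires and what the unbounded height field obstructs in the direct approach.

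Next I would record the structural properties of the conductance model used downstream: that its finite-volume specifications admit infinite-volume shift-covariant Gibbs measures, and that it enjoys the FKG and correlation inequalities proved in Section~\ref{sec:RC} following \cite{phasetransitionclass}. FKG yields a maximal and a minimal shift-covariant Gibbs measure, $\kappa^{+}$ and $\kappa^{-}$, so uniqueness is equivalent to $\kappa^{+}=\kappa^{-}$, and it suffices to rule out a strictly positive gap $\E(\kappa^{+}(\kappa_e)-\kappa^{-}(\kappa_e))$ for some edge $e$. The natural order parameter is this expected-conductance gap, and the field conjugate to it is the disorder itself, since $\xi_e$ enters the weight linearly through $e^{\xi_e\kappa_e}$.

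The heart of the argument, carried out in the uniqueness section, is the Aizenman–Wehr contradiction in $d=2$. I would study the free energy of the conductance model in a box $\Lambda$ under the two extremal boundary conditions, as a function of a global tilt $\xi\mapsto\xi+t$ of the disorder. The difference $X_\Lambda(t)$ of the two free energies is convex in $t$, its mean is boundary-controlled, $|\E X_\Lambda(0)|\le C|\partial\Lambda|$, and, because $\partial_t\log Z_\Lambda=\sum_{e}\langle\kappa_e\rangle$, a nonzero order parameter forces a deterministic jump in the left/right derivatives of $\E X_\Lambda$ proportional to the gap times $|\Lambda|$. On the other hand, writing $X_\Lambda-\E X_\Lambda$ as a martingale over the i.i.d.\ disorder variables $\{\xi_e\}_{e\in\Edge(\Lambda)}$ — each with all exponential moments by hypothesis, and with uniformly bounded influence since $\kappa_e$ ranges over a compact set — a martingale central limit theorem bounds its fluctuations by $C\sqrt{|\Lambda|}$. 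In $d=2$ the boundary and fluctuation scales coincide, $|\partial\Lambda|\sim\sqrt{|\Lambda|}\sim L$, and the Aizenman–Wehr averaging argument shows that a convex function with centered fluctuations of this order cannot sustain a deterministic kink; hence the order parameter vanishes and $\kappa^{+}=\kappa^{-}$.

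Finally I would lift uniqueness to the gradient field in the technical Section~\ref{sec:GMRC}. Conditionally on the now-unique conductance field, the gradient measure is the Gaussian gradient field with conductances $\kappa$, for which zero tilt selects a unique ergodic gradient Gibbs measure by the convex theory of \cite{MR1463032}; the moment bound \eqref{eq:cond_moment} is then automatic, since $\kappa_e\ge\lambda^{-1}$ renders the increments conditionally sub-Gaussian, and it is precisely the integrability needed to identify the extended object as a genuine gradient Gibbs measure and to transfer ergodicity and zero tilt between the two models. Combining conditional uniqueness with uniqueness of the conductance marginal yields the unique $\mu[\xi]$. The step I expect to be the main obstacle is the borderline $d=2$ part of the Aizenman–Wehr argument: unlike the random field Ising model the single-site variables here are continuous conductances and the coupling is not of classical monotone-field type, so the real work should lie in verifying the shift-covariance, finite-energy and bounded-influence estimates that feed the martingale central limit theorem, and in making the convexity-plus-fluctuation contradiction robust without the full strength of GKS-type inequalities.
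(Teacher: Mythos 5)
Your overall architecture matches the paper's: couple to the compact-state-space conductance model whose density is $\det(\Delta_\kappa)^{-1/2}$ times a tilted product measure, use FKG to reduce uniqueness to the vanishing of the order parameter $\E(\discmu^1[\xi](\kappa_e)-\discmu^0[\xi](\kappa_e))$, run the Aizenman--Wehr fluctuation-versus-boundary argument in $d=2$, and transfer back. But two of the steps you treat as routine are exactly where the paper has to work, and the one concrete mechanism you propose for the final step would not go through as stated. First, the deterministic $O(|\partial\Lambda|)$ bound on the free-energy difference between wired and free boundary conditions is not automatic here, because the weight $\det(\Delta_\kappa)^{-1/2}$ is a genuinely non-local function of all conductances; the paper has to compare $\det\Delta^0_\kappa$ and $\det\Delta^1_\kappa$ via Kirchhoff's matrix-tree theorem and an explicit spanning-tree surgery (Lemma~\ref{le:densities_comp}) to get a bound of the form $(2^{2d}\lambda)^{|\partial\Lambda|}$. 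Your ``uniformly bounded influence since $\kappa_e$ ranges over a compact set'' covers the single-edge derivative needed for the fluctuation lower bound, but not this boundary-condition comparison.

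Second, and more seriously, your transfer back conflates two distinct problems. Knowing $\discmu^0[\xi]=\discmu^1[\xi]$ only identifies the conductance marginals of the two extremal constructions; to conclude uniqueness of $\mu[\xi]$ you must show (a) that the $\kappa$-marginal of an \emph{arbitrary} shift covariant gradient Gibbs measure with the stated properties is stochastically sandwiched between $\discmu^0[\xi]$ and $\discmu^1[\xi]$ --- in the paper this is Lemma~\ref{le:gradGibbs_stoch_dom}, which requires conditioning on the field outside $\Lambda$, controlling the corrector energy $W_\Lambda(\kappa,\eta)$, and a Giorgi--Nash--Moser H\"older estimate to kill the boundary influence --- and (b) that the conditional law of $\eta$ given $\kappa$ is the \emph{centered} Gaussian with covariance $\Delta_\kappa^{-1}$. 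For (b), invoking the convex theory of \cite{MR1463032} does not work: conditionally on a fixed realization $\kappa$ the quadratic specification is not shift invariant, so that theory does not apply, and in any case the issue is not uniqueness of the conditional Gibbs measure but ruling out Gaussians with nontrivial $\Delta_\kappa$-harmonic means. The paper does this (Lemma~\ref{le:cond_distr}) by combining a directional ergodic theorem, which shows the height function is sublinear under the zero-tilt ergodic annealed measure, with a Liouville theorem stating that for shift invariant ergodic environments the only sublinear $\Delta_\kappa$-harmonic functions are constants; this is also where the moment hypothesis \eqref{eq:cond_moment} is actually consumed. Finally, the existence half of the theorem (in particular ergodicity of the annealed measure, which in the paper is itself derived from the uniqueness statement) is not addressed in your proposal.
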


One important special case is for $\rho$ is the simplest non-trivial class of measures
\begin{align}\label{eq:defrhospecial}
\rho = p\delta_q+ (1-p)\delta_1
\end{align}
where $p\in [0,1]$ and $q\ge 1$. In this case it was shown in \cite{MR2778801} that in dimension $d=2$ there are two zero-tilt Gibbs states for $q$ sufficiently large and $p_{\mathrm{sd}}$ characterized as the solution of the equation $p^4/(1-p)^4 = q$.
As a special case of our main result we show that the coexistence disappears as soon as disorder is introduced.
\begin{corollary}
Fix $q> 1$ and $d=2$. Assume that $(p_e)_{e\in \Edge(\Z^2)}$ are i.i.d.\ random variables and there exists $\epsilon>0$ such that $p_e$ is supported in $[\epsilon,1-\epsilon]$.
Moreover, we assume that $p_e$ is not concentrated on a single point. Then, for $d=2$ there is a unique shift covariant zero-tilt gradient Gibbs measure $(p_e)_{e\in \Edge(\Z^2)}\to\mu[(p_e)_{e\in \Edge(\Z^2)}]$ for the random potential $V^{p_e}_e$ given by
\begin{align}
e^{-V_e^{p_e}(s)} = p_e e^{\frac{-qs^2}{2}} + (1-p_e)e^{-\frac{s^2}{2}}
\end{align}
such that 
\begin{align}
\E\left(\mu[(p_e)_{e\in \Edge(\Z^2)}](|\eta_e|^{2+\varepsilon})\right)<\infty
\end{align}
for some $\varepsilon>0$.
\end{corollary}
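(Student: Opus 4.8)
The corollary is precisely the two-point specialization of Theorem~\ref{th:main} corresponding to the measure \eqref{eq:defrhospecial}, so the plan is to exhibit the change of variables that rewrites the random potential $V_e^{p_e}$ in the canonical form \eqref{eq:defofpotdisordered} and then to check that each hypothesis of Theorem~\ref{th:main} is inherited. I would fix the deterministic measure $\rho=\tfrac12\delta_q+\tfrac12\delta_1$. It has unit mass, and since $q>1$ its support $\{1,q\}$ is contained in $[q^{-1},q]$, so the standing support assumption holds with $\lambda=q$.

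Next I would define the disorder field by $\xi_e\defeq\tfrac{1}{q-1}\log\tfrac{p_e}{1-p_e}$, applied coordinatewise, so that $e^{\xi_e(q-1)}=p_e/(1-p_e)$. With $\rho$ as above, \eqref{eq:defofpotdisordered} becomes
\begin{align*}
e^{-V_e^{\xi}(s)}
=\tfrac12 e^{\xi_e q}\,e^{-\frac{q s^2}{2}}+\tfrac12 e^{\xi_e}\,e^{-\frac{s^2}{2}}
=\tfrac12\,\frac{e^{\xi_e}}{1-p_e}\Bigl(p_e\,e^{-\frac{q s^2}{2}}+(1-p_e)\,e^{-\frac{s^2}{2}}\Bigr),
\end{align*}
and the bracket is exactly $e^{-V_e^{p_e}(s)}$. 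Thus $V_e^{\xi}$ and $V_e^{p_e}$ differ only by the $s$-independent, edge-dependent constant $\log\bigl(\tfrac12 e^{\xi_e}/(1-p_e)\bigr)$. Since adding such edgewise constants to the Hamiltonian $H_\Lambda$ cancels against the partition function, it changes neither $\nu_\Lambda^\psi[\xi]$ nor the induced gradient Gibbs measures; hence the two potentials define the same objects and $\mu[(p_e)]=\mu[\xi]$ under this identification.

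It then remains to verify the hypotheses of Theorem~\ref{th:main} for $\xi$. Because the $p_e$ are i.i.d.\ and $\xi_e=g(p_e)$ for the fixed strictly increasing map $g(p)=\tfrac{1}{q-1}\log\tfrac{p}{1-p}$, the $\xi_e$ are i.i.d., and shift covariance is preserved since $g$ is applied coordinatewise and hence commutes with the shift \eqref{eq:shift_disorder}. As $p_e$ takes values in the compact interval $[\epsilon,1-\epsilon]$, the variable $\xi_e$ lies in the bounded interval $[g(\epsilon),g(1-\epsilon)]$, so $\E(e^{t|\xi_e|})<\infty$ for every $t\in\R$; and since $g$ is injective and $p_e$ is not concentrated on a point, neither is $\xi_e$. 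All hypotheses hold, so Theorem~\ref{th:main} yields a unique shift covariant gradient Gibbs measure $\xi\mapsto\mu[\xi]$ with ergodic zero-tilt annealed measure and the bound \eqref{eq:cond_moment}; transporting back through $g$ gives the asserted unique measure $(p_e)\mapsto\mu[(p_e)]$, whose moment bound is the one claimed.

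There is no deep obstacle here: the content is entirely in the reparametrization, and the one point to treat carefully is that the identification $p_e\leftrightarrow\xi_e$ is a measurable bijection intertwining the two shift actions and mapping the uniqueness class (shift covariant, zero tilt, ergodic annealed, moment bounded) for $V^{\xi}$ onto the corresponding class for $V^{p_e}$, so that uniqueness genuinely transfers rather than merely the existence of a matching measure.
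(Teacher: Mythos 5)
Your proposal is correct and follows essentially the same route as the paper's proof: fix $\rho=\tfrac12\delta_1+\tfrac12\delta_q$, reparametrize the disorder through the bijection between $p_e$ and $\xi_e$ (the paper writes $p(\xi_e)=e^{\xi_e}/(e^{\xi_e}+e^{\xi_e q})$ and inverts, which is your map $g$ up to the sign/labelling convention), observe that $V_e^{\xi}$ and $V_e^{p_e}$ differ by an $s$-independent constant that cancels in the Gibbs measure, check that the resulting $\xi_e$ are i.i.d., bounded, and non-degenerate, and invoke Theorem~\ref{th:main}. Your explicit remark that the coordinatewise bijection intertwines the shift actions and transports the whole uniqueness class is a slightly more careful articulation of a point the paper leaves implicit, but it is not a different argument.
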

\begin{proof}
We consider the general setting above and take $\rho=\tfrac12 \delta_1+\tfrac12 \delta_q$. Then
\begin{align}
e^{-V_e^\xi(s)}
= \frac12 e^{\xi_e}e^{-\frac{s^2}{2}} + \frac12 e^{\xi_e q}e^{-\frac{qs^2}{2}}
= \frac12 \left(e^{\xi_e}+e^{\xi_e q}\right)
\left(p(\xi_e) 
e^{-\frac{s^2}{2}}
+(1-p(\xi_e)
e^{-\frac{qs^2}{2}}\right)
\end{align}
where
\begin{align}\label{eq:p_xi}
p(\xi_e)=\frac{e^{\xi_e}}{ e^{\xi_e}+e^{\xi_e q}}.
\end{align}
Note that this is a bijection from $(-\infty,\infty)$ to $(0,1)$ for $q>1$.
Moreover, 
$V_e^\xi(s)=V_e^{p(\xi_e)}+c(\xi_e)$ where the constant term does not change the   distribution of the gradient fields.
We obtain  i.i.d.\  
random variables $\xi_e$ such that $p(\xi_e)\overset{\Dcal}{=} p_e$ using the inverse function of
\eqref{eq:p_xi}. The fact that the distribution of $p_e$ is supported in 
$[\epsilon,1-\epsilon]$ implies that also $\xi_e$ has bounded support.
Therefore we can apply Theorem~\ref{th:main} to conclude. 
\end{proof}
The proof of the theorem can be found in Section  \ref{sec:GMRC}. 
Let us remark that we did not try to optimize the integrability condition for the disorder neither in the theorem nor in the corollary. In fact 
the general results require only $2+\varepsilon$ moments of $\xi_e$ (see, e.g., \cite{MR2252929}). 
We also expect that instead of \eqref{eq:cond_moment}
 square integrability is sufficient  to prove uniqueness.

\section{The disordered random conductance model}\label{sec:RC}
As in earlier works we can connect the gradient interface model
for potentials as in \eqref{eq:defofpotdisordered} with certain random conductance models. 
Here we introduce the relevant random conductance model in slightly
larger generality than in \cite{phasetransitionclass} and add disorder to the model. We keep this section as short as possible and refer to 
 \cite{phasetransitionclass} for a more detailed account and motivation for this
 model.
  The detailed investigation of the relation between the random conductance model and the gradient models is deferred to Section~\ref{sec:GMRC}. To provide a bit of context we note that for potentials as in \eqref{eq:defofpotdisordered}
  the decomposition 
  \begin{align}
  e^{-V_e^\xi(s)} = \int_{\R_+} \rho(\d\kappa)\, e^{-\frac{\kappa s^2}{2}}e^{\xi_e\kappa}
  \end{align}
  allows to consider extended Gibbs measures (see Section~\ref{sec:GMRC})
  for the joint distribution of the gradient field and the conductances $\kappa$.
  The key observation is that conditioned on $\kappa$ the gradient field is an inhomogeneous Gaussian field with whose partition function can be expressed as a certain determinant. This allows us to infer the density of the 
  $\kappa$-marginal which is given by the expression in \eqref{eq:kappa-marginal}
  below which we will study in this section.
  
We denote the set of positive Borel measures with mass 1  on $\R_+$ by $\Mcal_1(\R_+)$. 
It is helpful to consider the random conductance model on arbitrary connected finite graphs
$G=(\Edge(G), \Vertex(G))$.
For a set of conductances $\kappa\in \R^{\Edge(G)}$ we define the (combinatorial) graph Laplacian
$\Delta_\kappa$ 
acting on $H_0=\{f:\Vertex(G)\to \R: \, \sum_{x\in \Vertex(G)} f(x)=0\}$
by 
\begin{align}
\Delta_\kappa f(x)=\sum_{y\sim x} \kappa_{\{x,y\}}\left(f(x)-f(y)\right).
\end{align}
This is a positive linear operator and therefore $\det \Delta_\kappa>0$. 
Suppose we are given an element $\bs{\rho}$ of $\Mcal(\R_+)^{\Edge(G)}$.
Later $\bs{\rho}$ will depend on the realization of the disorder but all results 
in this section hold in this more general setting.
We consider the probability measure $\discmu^{\bs{\rho}}$ on $\R_+^{\Edge(G)}$ given by 
\begin{align}\label{eq:kappa-marginal}
\discmu^{\bs{\rho}}(\d \kappa) = \frac{1}{Z} \frac{1}{\sqrt{\det \Delta_{\kappa}}}\, \prod_{e\in \Edge(G)} \bs{\rho}_e(\d\kappa_e). 
\end{align} 
Here $Z$ denotes the normalization so that $\discmu^{\bs{\rho}}$ is a probability measure. We need to impose certain conditions on $\bs{\rho}$
to ensure that $Z$ is finite but actually we will later only consider $\bs{\rho}$ 
with compact support in $(0,\infty)$ which is certainly sufficient. 
Let us remark that the bar will always indicate measures on conductances, i.e., on $\R_+^E$ for a suitable edge set $E$. The notation $\discmu$ shall indicate that these measures
are in close relation to the gradient Gibbs measures $\mu$ and we will later provide an explicit map from
gradient Gibbs measures to measures on conductances.
To simplify the notation in the following we introduce the function $s_G:\R_+^{\Edge(G)}\to \R_+$ as a shorthand for the density
\begin{align}\label{eq:def_s}
s_G(\kappa) = \frac{1}{\sqrt{\det \Delta_\kappa}}.
\end{align}
%
%
%
%

To state the results of this section we need some notation.
Let $E$ be a finite or countable infinite set. Let $S=\R_+^E$
and $\mathcal{F}$ the $\sigma$-algebra generated by cylinder events 
and $\mathcal{F}_{E'}$ the $\sigma$-algebra generated by the cylinder events in $\R_+^{E'}\hookrightarrow
\R_+^E$. 
We consider the usual partial order on $S$ given by $\kappa^1\leq \kappa^2$ iff $\kappa^1_e\leq \kappa^2_e$ for all $e\in E$. A function $X:S\to \R$ is increasing if
$X(\kappa_1)\leq X(\kappa_2)$  for $\kappa_1\leq \kappa_2$ and decreasing if $-X$ is increasing.
An event $A\subset S$ is increasing if its indicator function is increasing.
We write $\discmu_1\succsim  \discmu_2$ if $\discmu_1$ stochastically dominates $\discmu_2$ which is by Strassen's Theorem equivalent to the existence of a coupling $(\kappa_1,\kappa_2)$ such that $\kappa_1\sim \discmu_1$ and $\kappa_2\sim\bar \mu_2$ and $\kappa_1\geq \kappa_2$ (see \cite{MR177430}). We introduce the minimum $\kappa_1 \wedge \kappa_2$ and the maximum $\kappa_1\vee \kappa_2$ of two configurations given by
$(\kappa_1 \wedge \kappa_2)_e=\min((\kappa_1)_e, (\kappa_2)_e)$
and 
$(\kappa_1 \vee \kappa_2)_e=\max((\kappa_1)_e, (\kappa_2)_e)$ for any $e\in E$.
Finally we introduce for $f,g\in E$ and $\kappa\in S$ the notation $\kappa_{fg}^{\pm \pm}\in S$ for the configuration given by
$(\kappa_{fg}^{\pm\pm})_e=\kappa_e$ for $e\notin \{f,g\}$ 
and $(\kappa_{fg}^{\pm \ast})_f= c_f^{\pm}$, $(\kappa_{fg}^{\ast \pm})_g= c_g^{\pm}$ where $c_f^\pm, c_g^\pm\in \R_+$ are constants satisfying 
$c_f^-<c_f^+$  and $c_g^-<c_g^+$ that are suppressed in the notation.
We define $\kappa_f^{\pm}$ similarly. We sometimes drop the edges $f$, $g$ from the notation.
We write $\discmu(X)=\int_{\Omega}X\,\d\discmu$ for $X:\Omega\to \R$.
In the context of continuum models it is possible to state correlation inequalities for probability measures that can be expressed as $\discmu = s \bs\rho$ where $s:S\to \R_+$ is a density function and $\bs \rho$ is a product measure on $S=\R_+^E$. 

\begin{theorem}[Holley inequality]\label{th:stoch_dom}
Let $E$ be finite and $\discmu_1=s_1\bs\rho$, $\discmu_2=s_2\bs\rho$  two probability measures on $S$   that  satisfy the Holley inequality
\begin{align}\label{eq:HolleyTheorem}
s_2(\kappa_1\vee \kappa_2)s_1(\kappa_1\wedge \kappa_2)\geq s_1(\kappa_1)s_2(\kappa_2) \quad\text{for $\kappa_1,\kappa_2\in S$}.
\end{align}
Then $\discmu_1\precsim \discmu_2$.
\end{theorem}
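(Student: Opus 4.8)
The plan is to reduce the continuum statement to the classical Holley inequality on a finite distributive lattice by a discretization-and-limiting argument, since $E$ is finite and each coordinate lives in $\R_+$. First I would fix a large box $[\delta, M]^E \subset S$ carrying most of the mass of both $\discmu_1$ and $\discmu_2$, and replace the continuous coordinate space $\R_+$ on each edge by a finite grid $G_n = \{\delta, \delta + 2^{-n}, \dots, M\}$. Restricting the densities $s_1, s_2$ to the product lattice $L_n = G_n^E$ and renormalizing against the (discretized) product measure $\bs\rho$ yields two probability measures $\discmu_1^{(n)}, \discmu_2^{(n)}$ on the finite distributive lattice $L_n$ ordered by the coordinatewise partial order, with $\kappa_1 \vee \kappa_2$ and $\kappa_1 \wedge \kappa_2$ the lattice join and meet. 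The crucial point is that the hypothesis \eqref{eq:HolleyTheorem} is exactly the Holley (or FKG-lattice) condition for these discretized measures, and it is preserved under restriction to $L_n$ because $\kappa_1 \vee \kappa_2$ and $\kappa_1 \wedge \kappa_2$ of two lattice points remain in $L_n$.

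Next I would invoke the classical finite-lattice Holley inequality: if two probability measures $\nu_1, \nu_2$ on a finite distributive lattice satisfy $\nu_2(\kappa_1 \vee \kappa_2)\,\nu_1(\kappa_1 \wedge \kappa_2) \geq \nu_1(\kappa_1)\,\nu_2(\kappa_2)$ for all pairs, then $\nu_1 \precsim \nu_2$ in the stochastic order, i.e.\ $\int X \, \d\nu_1 \leq \int X \, \d\nu_2$ for every increasing $X$. Applying this to $\discmu_1^{(n)}, \discmu_2^{(n)}$ gives $\discmu_1^{(n)} \precsim \discmu_2^{(n)}$ on $L_n$ for every $n$. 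Since the product measure $\bs\rho$ on the compact box is finite and the densities are assumed measurable (and, in the application, continuous and bounded on $[\delta,M]^E$ because $s_G(\kappa) = (\det\Delta_\kappa)^{-1/2}$ is smooth and strictly positive away from degenerate conductances), the discretized measures converge weakly to the box-truncations of $\discmu_1, \discmu_2$ as $n \to \infty$. Stochastic domination is preserved under weak limits when tested against bounded increasing continuous functions, so I recover $\discmu_1 \precsim \discmu_2$ on the truncated box; finally letting $\delta \to 0$ and $M \to \infty$ and using that the truncations converge in total variation to $\discmu_1, \discmu_2$ (the densities are integrable against the product measure, which is supported in a compact subset of $(0,\infty)$ in the application) removes the truncation.

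An alternative, which avoids discretization entirely, is to prove the continuum statement directly by constructing a coupling via a monotone Markov dynamics: one runs a Glauber-type single-site heat-bath chain that, at each edge, resamples $\kappa_e$ from its conditional distribution under $\discmu_i$, and couples the two chains so that the $\discmu_1$-chain stays below the $\discmu_2$-chain. The Holley condition \eqref{eq:HolleyTheorem} translates precisely into the monotonicity of the single-site conditional distributions (in the FKG/stochastic-monotonicity sense), which is what makes the monotone coupling consistent and drives both chains to their respective stationary measures $\discmu_1, \discmu_2$. I expect the discretization route to be the cleaner writeup, and the \textbf{main obstacle} is purely technical rather than conceptual: one must justify the interchange of the limit with the stochastic-domination relation, i.e.\ verify that the truncation to $[\delta, M]^E$ and the grid refinement do not destroy the Holley condition and that the limiting measures genuinely equal $\discmu_1$ and $\discmu_2$. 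This requires checking integrability of $s_1, s_2$ against $\bs\rho$ near the boundary of $\R_+^E$ and uniform control of the normalizing constants $Z_1^{(n)}, Z_2^{(n)}$, both of which are harmless under the compact-support assumption on $\bs\rho$ that holds in all subsequent applications.
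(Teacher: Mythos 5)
Your strategy is sound and coincides with what the paper actually does here: the paper gives no proof of its own but simply cites Holley's original discrete argument and Preston's continuum extension (plus an alternative proof), and your discretize-and-pass-to-the-limit route — respectively your monotone heat-bath coupling — is precisely the content of those references. The one point to watch is that for merely measurable densities (which is all the theorem assumes) one should discretize by cell averages rather than by point evaluation, in which case preservation of the Holley condition follows from the Ahlswede--Daykin four-functions inequality together with the observation that $\bs\rho\bigl(C(\kappa_1\vee\kappa_2)\bigr)\,\bs\rho\bigl(C(\kappa_1\wedge\kappa_2)\bigr)=\bs\rho\bigl(C(\kappa_1)\bigr)\,\bs\rho\bigl(C(\kappa_2)\bigr)$ for product cells of the product measure $\bs\rho$; in the paper's application the density $s_G$ is continuous, so your point-evaluation version suffices there.
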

\begin{proof}
The original proof for the discrete setting $S=\{0,1\}^E$ appeared in \cite{MR0341552}, for the extension to continuum models we refer to \cite[Theorem 3]{MR341553}
and also \cite[Theorem 2]{MR0467867} for a different proof.
\end{proof}
Let us also state the FKG inequality.

\begin{theorem}\label{th:FKG}
A probability measure $\discmu=s \bar\rho$ that satisfies the lattice condition
\begin{align}\label{eq:lattice_FKG}
s(\kappa_1\vee\kappa_2) s(\kappa_1\wedge \kappa_2)\geq s(\kappa_1)s(\kappa_2) \quad\text{ for $\kappa_1,\kappa_2\in S$}.
\end{align}
satisfies the FKG inequality, i.e., for increasing functions $X,Y:S\to \R$  such that the following integrals exist we have
\begin{align}\label{eq:FKG}
\discmu(XY)\geq \discmu(X)\discmu(Y).
\end{align}
\end{theorem}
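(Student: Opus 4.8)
The plan is to derive the FKG inequality \eqref{eq:FKG} from the Holley inequality of Theorem~\ref{th:stoch_dom} by tilting $\discmu$ with the increasing function $Y$. I take $E$ finite throughout, as required by Theorem~\ref{th:stoch_dom}, and comment on the countable case at the end.

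First I would reduce to a convenient class of functions. It suffices to prove \eqref{eq:FKG} for bounded increasing $X,Y$: the general case then follows by truncating $X,Y$ to $(X\vee(-n))\wedge n$ and $(Y\vee(-n))\wedge n$, which are again increasing and bounded, and letting $n\to\infty$ using the assumed existence of the integrals together with dominated convergence. For bounded $Y$ I may further add a constant so that $Y\geq 1>0$; indeed a direct computation shows that the quantity $\discmu(XY)-\discmu(X)\discmu(Y)$ is unchanged when a constant is added to either argument, because $\discmu$ is a probability measure. Hence $\discmu(Y)\in[1,\infty)$ and I can introduce the tilted probability measure $\discmu_2=s_2\bs\rho$ on $S$ with density
\begin{align}
s_2(\kappa)=\frac{Y(\kappa)\,s(\kappa)}{\discmu(Y)},
\end{align}
so that $\discmu_2(X)=\discmu(XY)/\discmu(Y)$ for every bounded $X$.

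Next I would check that the pair $(\discmu_1,\discmu_2):=(\discmu,\discmu_2)$ satisfies the Holley condition \eqref{eq:HolleyTheorem}. After substituting the definition of $s_2$ and cancelling the positive constant $\discmu(Y)$, the condition \eqref{eq:HolleyTheorem} becomes
\begin{align}
Y(\kappa_1\vee\kappa_2)\,s(\kappa_1\vee\kappa_2)\,s(\kappa_1\wedge\kappa_2)\;\geq\; Y(\kappa_2)\,s(\kappa_1)\,s(\kappa_2).
\end{align}
This is immediate by combining two facts: monotonicity of $Y$ gives $Y(\kappa_1\vee\kappa_2)\geq Y(\kappa_2)\geq 0$, while the lattice condition \eqref{eq:lattice_FKG} gives $s(\kappa_1\vee\kappa_2)\,s(\kappa_1\wedge\kappa_2)\geq s(\kappa_1)\,s(\kappa_2)\geq 0$; multiplying these two inequalities of nonnegative quantities yields the displayed inequality. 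Theorem~\ref{th:stoch_dom} therefore gives $\discmu=\discmu_1\precsim\discmu_2$.

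Finally I would unwind the stochastic domination. Applying $\discmu\precsim\discmu_2$ to the increasing function $X$ yields $\discmu(X)\leq\discmu_2(X)=\discmu(XY)/\discmu(Y)$, which rearranges to $\discmu(XY)\geq\discmu(X)\discmu(Y)$, i.e.\ \eqref{eq:FKG}. The core of the argument, the verification of the Holley condition, is purely algebraic and uses \eqref{eq:lattice_FKG} directly. I expect the main (though routine) obstacle to lie in the approximation steps: one must confirm that truncation and constant shifts preserve monotonicity and that the limits are legitimate under the stated integrability, and, for a countably infinite $E$, promote the finite-$E$ statement by applying it to the finite-dimensional marginals on $\mathcal{F}_{E'}$ and passing to the limit along an exhaustion $E'\uparrow E$.
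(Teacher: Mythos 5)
Your argument is correct. Note, however, that the paper does not actually prove Theorem~\ref{th:FKG}: it only cites Grimmett (Theorem 2.16) for $S=\{0,1\}^E$ and Preston (Corollary 2) for the continuum case. What you have written out is precisely the standard derivation used in those references: tilt $\discmu$ by a strictly positive bounded increasing $Y$, check that the pair $(\discmu, Y\discmu/\discmu(Y))$ satisfies the Holley condition \eqref{eq:HolleyTheorem} by multiplying $Y(\kappa_1\vee\kappa_2)\geq Y(\kappa_2)$ against the lattice condition \eqref{eq:lattice_FKG}, and unwind the resulting stochastic domination on $X$. The reduction to bounded $X,Y\geq 1$ via truncation and constant shifts is sound (the covariance is shift-invariant, $|X_nY_n|\leq|XY|$ gives dominated convergence), so for finite $E$ -- the only case in which the paper invokes this theorem, since it is applied to $\discmu^{G,\bs\rho}$ on finite graphs, with infinite volume handled separately by limits in Lemma~\ref{le:inf_volume_measures} -- your proof is complete and self-contained, which is arguably an improvement over the bare citation.

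One caveat on your closing remark about countably infinite $E$: applying the finite-$E$ statement to the marginals on $\discsigma_{E'}$ requires knowing that the marginal density $\int s(\kappa_{E'},\beta)\,\bs\rho_{E\setminus E'}(\d\beta)$ again satisfies \eqref{eq:lattice_FKG}. This is true (the lattice/MTP$_2$ condition is preserved under marginalization, by a theorem of Karlin and Rinott), but it is not automatic and you do not justify it; moreover, passing from local increasing functions to general increasing $X$ on $\R_+^E$ needs an additional approximation argument. Since the paper never needs the countable case of this theorem, this does not affect the result, but as stated that sentence is a gap rather than a routine step.
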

\begin{proof}
A proof can be found in  \cite[Theorem 2.16]{MR2243761} for the case $S=\{0,1\}^E$
and we refer to \cite[Corollary 2]{MR0467867} for an extension to the continuum setting.
\end{proof}
The next theorem provides a simple way to verify the assumptions
of Theorem \ref{th:stoch_dom} and Theorem \ref{th:FKG}.
Basically it states that it is sufficient to check the conditions when varying at most two edges.
\begin{theorem}\label{th:stoch_dom_crit}
Let  $\discmu_1=s_1\bs\rho$, $\discmu_2=s_2 \bs\rho$ be probability measures on $S$.
Then $s_1>0$ and $s_2>0$ satisfy \eqref{eq:HolleyTheorem}
iff the following two inequalities hold
\begin{align}
s_2(\k_f^+)s_1(\k_f^-)&\geq 
s_1(\k_f^+)s_2(\k_f^-),\quad \text{for $\kappa\in S$, $f\in E$, $c_f^-<c_f^+$} \label{eq:stoch_dom_crit1}\\
s_2(\k_{fg}^{++})s_1(\k_{fg}^{--})
&\geq s_1(\k_{fg}^{+-})s_2(\k_{fg}^{-+}),\quad \text{for $\k\in S$, $f,g\in E$, $c_f^-<c_f^+$, $c_g^-<c_g^+$}. \label{eq:stoch_dom_crit2}.
\end{align}
In particular, \eqref{eq:stoch_dom_crit1} and \eqref{eq:stoch_dom_crit2}
together imply $\discmu_1\precsim\discmu_2$.
\end{theorem}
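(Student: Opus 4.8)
The plan is to reformulate everything in terms of the log-densities $\psi_i=\log s_i$ (legitimate since $s_1,s_2>0$), under which \eqref{eq:HolleyTheorem} becomes the statement that
\[
\psi_2(\kappa_1\vee\kappa_2)+\psi_1(\kappa_1\wedge\kappa_2)\ge \psi_1(\kappa_1)+\psi_2(\kappa_2)
\]
for all $\kappa_1,\kappa_2\in S$, while \eqref{eq:stoch_dom_crit1} and \eqref{eq:stoch_dom_crit2} are exactly this inequality restricted to pairs differing in one, respectively two, coordinates. The forward (``only if'') implication is then immediate: \eqref{eq:stoch_dom_crit1} is \eqref{eq:HolleyTheorem} applied to $\kappa_1=\k_f^+$, $\kappa_2=\k_f^-$ (whose join and meet are $\k_f^+$ and $\k_f^-$), and \eqref{eq:stoch_dom_crit2} is \eqref{eq:HolleyTheorem} applied to $\kappa_1=\k_{fg}^{+-}$, $\kappa_2=\k_{fg}^{-+}$ (join $\k_{fg}^{++}$, meet $\k_{fg}^{--}$). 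Once the equivalence is established, the final assertion $\discmu_1\precsim\discmu_2$ follows by feeding \eqref{eq:HolleyTheorem} into Theorem~\ref{th:stoch_dom}.

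For the substantial (``if'') direction I would prove \eqref{eq:HolleyTheorem} for an arbitrary pair $\kappa_1,\kappa_2$ by induction on the number $n$ of coordinates on which they disagree. Since only the values $c_e^-=(\kappa_1\wedge\kappa_2)_e$ and $c_e^+=(\kappa_1\vee\kappa_2)_e$ on the disagreement set enter, I would freeze all other coordinates and regard the problem as living on the two-point lattice $\prod_e\{c_e^-,c_e^+\}$, i.e.\ on a Boolean cube, on which the hypotheses \eqref{eq:stoch_dom_crit1} and \eqref{eq:stoch_dom_crit2} are inherited. The cases $n=0,1,2$ are respectively trivial, \eqref{eq:stoch_dom_crit1}, and \eqref{eq:stoch_dom_crit2}. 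If $\kappa_1$ and $\kappa_2$ are comparable, say $\kappa_2\le\kappa_1$, then \eqref{eq:HolleyTheorem} reduces to monotonicity of the likelihood ratio $s_2/s_1$, which follows by telescoping \eqref{eq:stoch_dom_crit1} along any increasing chain from $\kappa_2$ to $\kappa_1$.

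The heart of the argument is the genuinely incomparable case, which I would treat by a second induction, conditioning on a single coordinate $e_0$ in the disagreement set. Writing $s_i^t$ for the slice densities obtained by fixing the $e_0$-value to $t$, the inductive hypothesis on the smaller edge set $E\setminus\{e_0\}$ yields the full inequality \eqref{eq:HolleyTheorem} for $(s_1^t,s_2^t)$ at each fixed level $t$. When $\kappa_1,\kappa_2$ share the same $e_0$-value this slice inequality is exactly what is needed; when their $e_0$-values differ — the essential case — I would combine the two slice inequalities (at the low and the high value of $e_0$) with \eqref{eq:stoch_dom_crit1} and \eqref{eq:stoch_dom_crit2} to bridge the two levels.

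The hard part will be precisely this bridging step. The naive idea of raising the disagreeing coordinates one at a time and comparing increments term by term does \emph{not} work: the required termwise comparison is equivalent to log-supermodularity of $s_1$ or of $s_2$ individually, a property that genuinely fails in the asymmetric case $s_1\ne s_2$ and is not implied by \eqref{eq:stoch_dom_crit1} and \eqref{eq:stoch_dom_crit2}. The inequality only survives after summing over the entire telescoping path, so one must keep the slice inequalities intact and use \eqref{eq:stoch_dom_crit2} as the mechanism that transfers between the high-$e_0$ behaviour of $s_2$ and the low-$e_0$ behaviour of $s_1$, with the roles of the two densities interchanged exactly as in its statement. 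This global combination is the delicate point; it is the continuum analogue of the pairwise criterion underlying Theorem~\ref{th:stoch_dom}, and where a self-contained verification is not carried out it can be imported from \cite{MR341553, MR0467867}.
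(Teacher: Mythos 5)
Your route coincides with the paper's: the paper's entire proof of this theorem is a citation of \cite[Theorem 2.3]{MR2243761} plus the remark that the argument is an elementary induction on the Hamming distance of $\kappa_1,\kappa_2$ which extends directly to the continuum, and your sketch reproduces exactly that induction while correctly supplying the pieces the paper leaves implicit --- the forward implication, the reduction of the continuum statement to a Boolean cube over the disagreement set (which is precisely why the discrete proof ``directly extends''), the comparable case via telescoping \eqref{eq:stoch_dom_crit1}, and the accurate observation that a naive termwise bridging would require the additional lattice condition of Corollary~\ref{co:stoch_dom}. The single step you leave open --- the inductive step for incomparable pairs at Hamming distance at least $3$ --- is exactly the content of Grimmett's Theorem 2.3, so deferring it to the literature puts you on the same footing as the paper; just note that the appropriate reference for this two-edge reduction is \cite{MR2243761} rather than \cite{MR341553, MR0467867}, which the paper cites for the Holley inequality itself (Theorem~\ref{th:stoch_dom}).
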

\begin{proof}
See  \cite[Theorem 2.3]{MR2243761}. The proof uses an elementary induction argument in the 	Hamming distance of $\k_1$ and $\k_2$ and directly extends to the continuum setting.
\end{proof}
We state one simple corollary of the previous results.
\begin{corollary}\label{co:stoch_dom}
Let $\discmu_1=s_1\bs\rho$, $\discmu_2=s_2\bs\rho$ be two probability measures on $S$ such that 
 $s_1>0$ and  $s_2>0$ and either $s_1$ or $s_2$  satisfies \eqref{eq:lattice_FKG}.
 Then
\begin{align}\label{eq:stoch_dom_crit1_repeated}
s_2(\k_f^+)s_1(\k_f^-)&\geq 
s_1(\k_f^+)s_2(\k_f^-),\quad \text{for $\k\in S$, $f\in E$, $c_f^-<c_f^+$}
\end{align}
implies $\discmu_1\precsim \discmu_2$.
\end{corollary}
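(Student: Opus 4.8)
The plan is to reduce everything to Theorem~\ref{th:stoch_dom_crit}, which guarantees $\discmu_1\precsim\discmu_2$ as soon as both the one-edge inequality \eqref{eq:stoch_dom_crit1} and the two-edge inequality \eqref{eq:stoch_dom_crit2} hold. The one-edge inequality \eqref{eq:stoch_dom_crit1} is literally the hypothesis \eqref{eq:stoch_dom_crit1_repeated}, so the whole task is to produce \eqref{eq:stoch_dom_crit2} out of \eqref{eq:stoch_dom_crit1_repeated} together with the lattice condition \eqref{eq:lattice_FKG}. Since the statement only assumes that \emph{one} of $s_1,s_2$ satisfies \eqref{eq:lattice_FKG} and the two cases are symmetric, I would first carry out the argument assuming $s_2$ satisfies \eqref{eq:lattice_FKG}, and then transcribe it for $s_1$.

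First I would feed the two mixed configurations into the lattice condition. Fixing $\k\in S$, edges $f,g$, and constants $c_f^-<c_f^+$, $c_g^-<c_g^+$, the choice $\k_1=\k_{fg}^{+-}$, $\k_2=\k_{fg}^{-+}$ gives $\k_1\vee\k_2=\k_{fg}^{++}$ and $\k_1\wedge\k_2=\k_{fg}^{--}$, so \eqref{eq:lattice_FKG} applied to $s_2$ reads
\begin{align}
s_2(\k_{fg}^{++})\, s_2(\k_{fg}^{--}) \geq s_2(\k_{fg}^{+-})\, s_2(\k_{fg}^{-+}).
\end{align}
Next I would apply the one-edge hypothesis \eqref{eq:stoch_dom_crit1_repeated} to the single edge $f$, choosing as base configuration the one that already carries the value $c_g^-$ on $g$; for that base $\k_f^+=\k_{fg}^{+-}$ and $\k_f^-=\k_{fg}^{--}$, so \eqref{eq:stoch_dom_crit1_repeated} becomes
\begin{align}
s_2(\k_{fg}^{+-})\, s_1(\k_{fg}^{--}) \geq s_1(\k_{fg}^{+-})\, s_2(\k_{fg}^{--}).
\end{align}
Because $s_1,s_2>0$, all the quantities are strictly positive and I may divide freely: the lattice inequality gives $s_2(\k_{fg}^{++})\geq s_2(\k_{fg}^{+-})\,s_2(\k_{fg}^{-+})/s_2(\k_{fg}^{--})$, while the one-edge inequality gives $s_1(\k_{fg}^{--})\geq s_1(\k_{fg}^{+-})\,s_2(\k_{fg}^{--})/s_2(\k_{fg}^{+-})$. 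Multiplying these two bounds, the factors $s_2(\k_{fg}^{--})$ and $s_2(\k_{fg}^{+-})$ cancel and leave exactly $s_2(\k_{fg}^{++})\,s_1(\k_{fg}^{--})\geq s_1(\k_{fg}^{+-})\,s_2(\k_{fg}^{-+})$, which is \eqref{eq:stoch_dom_crit2}.

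For the case where instead $s_1$ satisfies \eqref{eq:lattice_FKG}, the symmetric argument uses the lattice condition on $s_1$, namely $s_1(\k_{fg}^{++})\,s_1(\k_{fg}^{--})\geq s_1(\k_{fg}^{+-})\,s_1(\k_{fg}^{-+})$, together with \eqref{eq:stoch_dom_crit1_repeated} applied to edge $f$ over the base that carries $c_g^+$ on $g$ (yielding $s_2(\k_{fg}^{++})\,s_1(\k_{fg}^{-+})\geq s_1(\k_{fg}^{++})\,s_2(\k_{fg}^{-+})$); the same cancellation again produces \eqref{eq:stoch_dom_crit2}. Having verified both \eqref{eq:stoch_dom_crit1} and \eqref{eq:stoch_dom_crit2}, Theorem~\ref{th:stoch_dom_crit} gives $\discmu_1\precsim\discmu_2$. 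I expect no genuine obstacle here, as the content is purely the algebraic recombination of the two hypotheses; the only points demanding care are the bookkeeping of \emph{which} base configuration to insert into the one-edge inequality so that the resulting four-point estimate meshes with the lattice condition, and the remark that strict positivity of $s_1,s_2$ legitimizes every division.
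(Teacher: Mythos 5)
Your proposal is correct and follows essentially the same route as the paper: both verify the two-edge condition \eqref{eq:stoch_dom_crit2} by combining the one-edge hypothesis \eqref{eq:stoch_dom_crit1_repeated} (applied over a suitably chosen base configuration on $g$) with the lattice condition \eqref{eq:lattice_FKG}, and then invoke Theorem~\ref{th:stoch_dom_crit}. The only cosmetic difference is that the paper writes out the case where $s_1$ satisfies the lattice condition and you write out the $s_2$ case first; the algebraic cancellation is identical.
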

\begin{proof}
Assuming that $s_1$ satisfies \eqref{eq:lattice_FKG} we find using first the assumption
\eqref{eq:stoch_dom_crit1_repeated} and then \eqref{eq:lattice_FKG}
\begin{align}
s_2(\k_{fg}^{++})s_1(\k_{fg}^{--})
\geq \frac{s_1(\k_{fg}^{++})s_2(\k_{fg}^{-+})}{s_1(\k_{fg}^{-+})}s_1(\k_{fg}^{--})\geq s_2(\k_{fg}^{-+})s_1(\k_{fg}^{+-}).
\end{align}
Now Theorem \ref{th:stoch_dom_crit} implies the claim. The proof if $s_2$
satisfies \eqref{eq:lattice_FKG} is similar.
\end{proof}
We can now start to discuss the correlation inequalities for the measures $\discmu^{G,\bs\rho}$. 
Similar results for the simpler case where $\rho$ is as in \eqref{eq:defrhospecial} and homogeneous
have been derived in \cite{phasetransitionclass} where the result was reduced to correlation inequalities for the uniform spanning tree. It turns out that the proofs essentially extend to the present more general setting without changes.
Recall the notation $\kappa^{\pm\pm}_{fg}$ introduced before Theorem \ref{th:stoch_dom}
and also the shorthand $\kappa^{\pm\pm}$. 
 \begin{lemma}\label{le:lattice_2_edges}
For a finite graph $G$ and $\kappa\in S$ as above  
 \begin{align}\label{eq:lattice_2_edges}
 \det \Delta_{\kappa^{++}} \, \det \Delta_{\kappa^{--}}
 \leq\det \Delta_{\kappa^{+-}} \, \det \Delta_{\kappa^{-+}}.
 \end{align}
 \end{lemma}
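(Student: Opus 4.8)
The content of \eqref{eq:lattice_2_edges} is exactly the statement that $\kappa\mapsto\log\det\Delta_\kappa$ is submodular in the pair of coordinates $(\kappa_f,\kappa_g)$ when all other conductances are frozen, and the plan is to prove this by a direct second-derivative computation rather than by expanding determinants combinatorially. The starting point is the rank-one representation of the Laplacian on $H_0$: orienting each edge $e=\{x,y\}$ arbitrarily and setting $\mathbf{v}_e=\mathbf{1}_x-\mathbf{1}_y\in H_0$, one has $\langle f,\Delta_\kappa f\rangle=\sum_{e}\kappa_e\langle\mathbf{v}_e,f\rangle^2$, i.e. $\Delta_\kappa=\sum_e\kappa_e\,\mathbf{v}_e\mathbf{v}_e^\top$ as operators on $H_0$. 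Because $G$ is connected and all conductances are strictly positive, $\Delta_\kappa$ is positive definite on $H_0$, hence invertible there with $\det\Delta_\kappa>0$; consequently $\phi(x,y):=\log\det\Delta_\kappa$, viewed as a function of $x=\kappa_f$ and $y=\kappa_g$ with the remaining entries fixed at positive values, is smooth on the rectangle $[c_f^-,c_f^+]\times[c_g^-,c_g^+]$.

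The key step is the sign of the mixed partial. Since $\partial_x\Delta_\kappa=\mathbf{v}_f\mathbf{v}_f^\top$ and $\partial_y\Delta_\kappa=\mathbf{v}_g\mathbf{v}_g^\top$, Jacobi's formula gives $\partial_x\phi=\operatorname{tr}(\Delta_\kappa^{-1}\mathbf{v}_f\mathbf{v}_f^\top)=\mathbf{v}_f^\top\Delta_\kappa^{-1}\mathbf{v}_f$; differentiating once more, using $\partial_y\Delta_\kappa^{-1}=-\Delta_\kappa^{-1}(\partial_y\Delta_\kappa)\Delta_\kappa^{-1}$ and the symmetry of $\Delta_\kappa^{-1}$, yields
\[
\partial_x\partial_y\phi=-\big(\mathbf{v}_f^\top\Delta_\kappa^{-1}\mathbf{v}_g\big)^2\le 0 .
\]
(The scalar $\mathbf{v}_f^\top\Delta_\kappa^{-1}\mathbf{v}_g$ is, up to normalization, the transfer current induced across $g$ by a unit current through $f$, so this is the analytic incarnation of the negative correlation of the edges $f,g$ in the weighted spanning tree.) Integrating the mixed partial over the rectangle,
\[
\phi(c_f^+,c_g^+)+\phi(c_f^-,c_g^-)-\phi(c_f^+,c_g^-)-\phi(c_f^-,c_g^+)=\int_{c_f^-}^{c_f^+}\!\!\int_{c_g^-}^{c_g^+}\partial_x\partial_y\phi\,\d y\,\d x\le 0 ,
\]
and exponentiating gives $\det\Delta_{\kappa^{++}}\det\Delta_{\kappa^{--}}\le\det\Delta_{\kappa^{+-}}\det\Delta_{\kappa^{-+}}$, which is \eqref{eq:lattice_2_edges}.

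With this route the only thing to watch is that $\Delta_\kappa$ stays positive definite on $H_0$ throughout the rectangle, so that $\phi$ is $C^2$ and all determinants are positive; this is guaranteed by the connectedness of $G$ together with $c_f^\pm,c_g^\pm\in(0,\infty)$ (and extends to the closure by continuity if boundary values $0$ are ever needed). An equivalent but more combinatorial approach would first invoke the weighted Matrix--Tree theorem, $\det\Delta_\kappa=|\Vertex(G)|\sum_{T}\prod_{e\in T}\kappa_e$, so that $\det\Delta_\kappa$ is multilinear with nonnegative coefficients in $(\kappa_f,\kappa_g)$; writing $Z(x,y)=A+Bx+Cy+Dxy$ and using the algebraic identity $Z(c_f^+,c_g^-)Z(c_f^-,c_g^+)-Z(c_f^+,c_g^+)Z(c_f^-,c_g^-)=(c_f^+-c_f^-)(c_g^+-c_g^-)(BC-AD)$ reduces \eqref{eq:lattice_2_edges} to the single inequality $AD\le BC$ between weighted counts of spanning trees containing/avoiding $f$ and $g$. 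That inequality is precisely the negative correlation of edges in the (weighted) uniform spanning tree, and proving it is the genuine obstacle on this second route — but it reduces once more to the same transfer-current computation underlying the displayed mixed partial, which is why I would prefer the analytic argument above.
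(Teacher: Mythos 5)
Your proof is correct, but it takes a genuinely different route from the paper. The paper does not prove the lemma in-house at all: it cites Remark~4.7/Lemma~4.6 of \cite{phasetransitionclass}, where the inequality is obtained combinatorially, via the weighted matrix--tree expansion $\det\Delta_\kappa=|\Vertex(G)|\sum_T\prod_{e\in T}\kappa_e$ and the negative edge-correlation of the weighted uniform spanning tree --- exactly the ``second route'' you sketch and correctly reduce to $AD\le BC$. Your primary argument instead establishes the stronger statement that $\log\det\Delta_\kappa$ has nonpositive mixed partials, $\partial_{\kappa_f}\partial_{\kappa_g}\log\det\Delta_\kappa=-\bigl(\mathbf{v}_f^\top\Delta_\kappa^{-1}\mathbf{v}_g\bigr)^2\le 0$, and integrates over the rectangle; this is self-contained (no appeal to the transfer-current theorem or to the cited paper), works verbatim for multigraphs (which matters, since the contracted graphs $G/F$ used later can have multi-edges), and in fact yields the full FKG lattice condition \eqref{eq:HolleyCondition} in one stroke rather than via the two-edge reduction of Theorem~\ref{th:stoch_dom_crit}. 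What the combinatorial route buys in exchange is a discrete, probabilistic interpretation ($AD\le BC$ is literally negative association of $f$ and $g$ in the weighted spanning tree), which is the form the companion paper \cite{phasetransitionclass} exploits elsewhere; but as a proof of \eqref{eq:lattice_2_edges} your analytic version is complete, and your closing remark about handling boundary values $\kappa_e=0$ by continuity is a correct (and here unnecessary, since $\mathop{supp}\bs\rho_e\subset[\lambda^{-1},\lambda]$) precaution.
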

 \begin{proof}
This is the generalization stated in Remark  4.7 after  Lemma 4.6 in \cite{phasetransitionclass}.
 \end{proof}
The previous lemma directly implies that the measures $\discmu^{G,\bs\rho}$ are strongly positively associated. 
 \begin{corollary}\label{co:Holley1}
The measure $\discmu^{G,\bs\rho}=s_G \bs\rho$ satisfies the FKG lattice condition for any $\kappa_1,\kappa_2\in S$
\begin{align}\label{eq:HolleyCondition}
s_G(\kappa_1 \wedge \kappa_2)s_G(\kappa_1 \vee \kappa_2)\geq s_G(\kappa_1)s_G(\kappa_2)
\end{align}
and the FKG inequality
\begin{align}\label{eq:FKG_measure}
\discmu^{G, \bs \rho}(XY)\geq \discmu^{G, \bs \rho}(X)\, \discmu^{G, \bs \rho}(Y)
\end{align}
for any increasing functions $X,Y:S\to \R$ such that $X$, $Y$, and $XY$ are integrable with respect to $\discmu^{G,\bs\rho}$.
\end{corollary}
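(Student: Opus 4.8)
The plan is to recognize that the FKG lattice condition \eqref{eq:HolleyCondition} is nothing but the Holley hypothesis \eqref{eq:HolleyTheorem} in the degenerate case $s_1 = s_2 = s_G$, so that the two-edge reduction of Theorem \ref{th:stoch_dom_crit} applies and collapses the whole statement onto Lemma \ref{le:lattice_2_edges}. Since $G$ is finite, $E = \Edge(G)$ is finite, and since $\Delta_\kappa$ is a positive operator with $\det \Delta_\kappa > 0$ for every $\kappa \in \R_+^{\Edge(G)}$, the density $s_G = (\det \Delta_\kappa)^{-1/2}$ defined in \eqref{eq:def_s} is strictly positive; thus the positivity requirement $s_1, s_2 > 0$ of Theorem \ref{th:stoch_dom_crit} is automatically met.

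First I would apply Theorem \ref{th:stoch_dom_crit} with the choice $s_1 = s_2 = s_G$. Under this choice the conclusion inequality \eqref{eq:HolleyTheorem} is exactly the lattice condition \eqref{eq:HolleyCondition} we wish to prove, so by the theorem it suffices to verify the one-edge bound \eqref{eq:stoch_dom_crit1} and the two-edge bound \eqref{eq:stoch_dom_crit2}. The one-edge bound is vacuous: with $s_1 = s_2 = s_G$ the two sides of \eqref{eq:stoch_dom_crit1} coincide, so it holds with equality for every $f \in E$ and every pair $c_f^- < c_f^+$.

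The only substantive step is the two-edge bound \eqref{eq:stoch_dom_crit2}, which for $s_1 = s_2 = s_G$ reads
\begin{align}
s_G(\k_{fg}^{++})\, s_G(\k_{fg}^{--}) \geq s_G(\k_{fg}^{+-})\, s_G(\k_{fg}^{-+}).
\end{align}
Inserting $s_G = (\det \Delta_\kappa)^{-1/2}$ and squaring, this is equivalent to
\begin{align}
\det \Delta_{\kappa^{+-}}\, \det \Delta_{\kappa^{-+}} \geq \det \Delta_{\kappa^{++}}\, \det \Delta_{\kappa^{--}},
\end{align}
which is precisely the content of Lemma \ref{le:lattice_2_edges}; here taking reciprocals of the positive determinants reverses the inequality and the square root is monotone, so the direction is preserved correctly. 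This establishes \eqref{eq:HolleyCondition}. Finally, \eqref{eq:HolleyCondition} is exactly the FKG lattice condition \eqref{eq:lattice_FKG} for the density $s = s_G$, so Theorem \ref{th:FKG} immediately yields the FKG inequality \eqref{eq:FKG_measure} for all increasing functions $X, Y$ with $X$, $Y$, $XY$ integrable against $\discmu^{G,\bs\rho}$.

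I do not anticipate any genuine obstacle, as Lemma \ref{le:lattice_2_edges} carries all of the analytic weight; the points requiring care are purely bookkeeping, namely that the one-edge condition \eqref{eq:stoch_dom_crit1} becomes an identity in the symmetric case $s_1 = s_2$, and that passing from $\det \Delta_\kappa$ to $s_G = (\det \Delta_\kappa)^{-1/2}$ inverts the inequality exactly once, landing on the correct orientation of \eqref{eq:HolleyCondition}.
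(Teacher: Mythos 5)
Your proof is correct and follows essentially the same route as the paper: both reduce the lattice condition to the two-edge determinant inequality of Lemma \ref{le:lattice_2_edges} via Theorem \ref{th:stoch_dom_crit} with $s_1=s_2=s_G$ (noting that inverting and taking square roots of the positive determinants reverses the inequality once), and then invoke Theorem \ref{th:FKG}. Your write-up merely makes explicit the bookkeeping (positivity of $s_G$, the one-edge condition holding with equality) that the paper leaves implicit.
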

\begin{proof}
Note that \eqref{eq:lattice_2_edges} is equivalent to
\begin{align}
s_G(\k^{++})s_G(\k^{--})\geq s_G(\k^{+-})s_G(\k^{-+}).
\end{align}
It follows from Theorem \ref{th:stoch_dom_crit} that the FKG lattice condition  \eqref{eq:HolleyCondition}
holds and therefore by Theorem \ref{th:FKG} also the FKG-inequality \eqref{eq:FKG_measure}.
\end{proof}

We also state correlation inequalities with respect to the size of the graph. More specifically we show statements for subgraphs and contracted graphs. This easily implies 
the existence of infinite volume limits.
 Moreover, we can bound infinite volume states by finite  volume measures in the sense of stochastic domination.
Let $F\subset E$ be a set of edges. We define the contracted graph
$G/F$ by identifying for every edge $f\in F$ the endpoints of $f$.
Similarly for a set $W\subset V$ of vertices we define the contracted graph $G/W$ by identifying all vertices in $W$.
The resulting graphs may have multi-edges.
We also consider connected subgraphs $G'=(V',E')$ of $G$.
Recall the notation $\kappa^{\pm}=\kappa_f^\pm$ for $f\in E$.
We use the notation $s_{G'}(\kappa)=s_{G'}(\kappa{\restriction}_{G'})$ for the density involving the determinant  of the operator $\Delta_\kappa^{G'}$ which denotes the graph Laplacian on $G'$ where we restrict the conductances $\kappa$ 
to $E'$ and similarly we denote by $s_{G/F}(\kappa)$ the density involving the determinant of the graph Laplacian on $G/F$. 
The following lemma relates the determinants of the different graph Laplacians.
\begin{lemma}\label{le:det_incl}
With the notation introduced above we have for $\kappa\in S$
\begin{align}\label{eq:det_inequality_diff_sizes}
\frac{\det \Delta^{G'}_{\kappa^+}}{\det \Delta^{G'}_{\kappa^-}}\geq 
\frac{\det \Delta^G_{\kappa^+}}{\det \Delta^G_{\kappa^-}}\geq
\frac{\det \Delta^{G/F}_{\kappa^+}}{\det \Delta^{G/F}_{\kappa^-}}.
\end{align}
\end{lemma}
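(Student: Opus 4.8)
The plan is to reduce \eqref{eq:det_inequality_diff_sizes} to the weighted Matrix--Tree theorem together with Rayleigh's monotonicity law. Write $\Tcal(G;\kappa)=\sum_{T}\prod_{e\in T}\kappa_e$ for the weighted spanning-tree polynomial of $G$, the sum running over spanning trees $T$. Since $\Delta_\kappa$ is the Laplacian restricted to $H_0$, the Matrix--Tree theorem gives $\det\Delta_\kappa=|\Vertex(G)|\,\Tcal(G;\kappa)$; the vertex factor cancels in each of the three ratios appearing in \eqref{eq:det_inequality_diff_sizes}, so it suffices to prove the same chain with $\det\Delta$ replaced by $\Tcal$. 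Let $f=\{x,y\}$ be the edge at which $\kappa^+,\kappa^-$ differ, with $(\kappa^\pm)_f=c_f^\pm$ and $c_f^-<c_f^+$. Classifying spanning trees according to whether they contain $f$ yields
\begin{align}
\Tcal(G;\kappa)=\kappa_f\,\Tcal(G/f)+\Tcal(G\setminus f),
\end{align}
where the deletion $G\setminus f$ and contraction $G/f$ no longer depend on $\kappa_f$; hence
\begin{align}
\frac{\det\Delta^G_{\kappa^+}}{\det\Delta^G_{\kappa^-}}=\frac{c_f^+\,A_G+B_G}{c_f^-\,A_G+B_G},\qquad A_G=\Tcal(G/f),\quad B_G=\Tcal(G\setminus f).
\end{align}

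First I would record an elementary monotonicity. Writing the right-hand side as $(c_f^++r)/(c_f^-+r)$ with $r=B_G/A_G\in[0,\infty]$, its derivative in $r$ equals $(c_f^--c_f^+)/(c_f^-+r)^2<0$, so the ratio is strictly decreasing in $r$. Thus \eqref{eq:det_inequality_diff_sizes} is equivalent to the \emph{reversed} chain for the quantities $r$, namely
\begin{align}
\frac{\Tcal(G'\setminus f)}{\Tcal(G'/f)}\ \leq\ \frac{\Tcal(G\setminus f)}{\Tcal(G/f)}\ \leq\ \frac{\Tcal((G/F)\setminus f)}{\Tcal((G/F)/f)}.
\end{align}

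The next step identifies $r$ electrically. Contracting $f$ merges $x$ and $y$ and drops the resulting self-loop, so $\Tcal(G/f)=\Tcal\big((G\setminus f)/\{x\equiv y\}\big)$; combined with the classical identity $R_{\mathrm{eff}}^H(x,y)=\Tcal(H/\{x\equiv y\})/\Tcal(H)$ for effective resistance, this gives
\begin{align}
r=\frac{\Tcal(G\setminus f)}{\Tcal(G/f)}=\frac{1}{R_{\mathrm{eff}}^{G\setminus f}(x,y)}=C_{\mathrm{eff}}^{G\setminus f}(x,y),
\end{align}
the effective conductance between the endpoints of $f$ in the network $G\setminus f$ with conductances $\kappa$. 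The two inequalities above then read $C_{\mathrm{eff}}^{G'\setminus f}(x,y)\leq C_{\mathrm{eff}}^{G\setminus f}(x,y)\leq C_{\mathrm{eff}}^{(G/F)\setminus f}(x,y)$, which is precisely Rayleigh's monotonicity law: passing to the connected subgraph $G'$ deletes edges and isolates the vertices in $\Vertex(G)\setminus\Vertex(G')$, i.e.\ lowers some conductances to $0$ and hence lowers $C_{\mathrm{eff}}$, while contracting the edges of $F$ raises their conductances to $+\infty$ (short circuits) and hence raises $C_{\mathrm{eff}}$.

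Assembling the pieces gives \eqref{eq:det_inequality_diff_sizes}: the determinant ratio is decreasing in $r=C_{\mathrm{eff}}^{G\setminus f}$, and $C_{\mathrm{eff}}$ increases as one moves from $G'$ to $G$ to $G/F$, so the determinant ratio decreases along that chain. I expect no conceptual obstacle beyond recognizing the determinant ratio as a monotone function of an effective conductance; the remaining points need only care. One must check that the Matrix--Tree normalization cancels across graphs of different sizes (it does, since each ratio lives on a single fixed graph), and the degenerate cases should be read as boundary values of the same monotone expression: if $f$ is a bridge then $B_G=0$ and $r=0$, while if $F$ already joins $x$ to $y$ then $x,y$ are identified in $G/F$, $f$ becomes a self-loop, and $r=\infty$, both of which are covered by allowing $r\in[0,\infty]$. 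An equivalent route, which sidesteps the self-loop bookkeeping, is to use Jacobi's formula $\partial_{\kappa_f}\log\det\Delta^G_\kappa=R_{\mathrm{eff}}^{G}(x,y)$ and integrate the Rayleigh comparison of effective resistances over $\kappa_f\in[c_f^-,c_f^+]$.
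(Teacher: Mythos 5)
Your proof is correct, and it is genuinely more than what the paper does: the paper's ``proof'' of this lemma is a one-line citation to Remark~4.11 after Lemma~4.10 of the companion work \cite{phasetransitionclass}, where the statement is obtained from spanning-tree considerations, so no argument is actually displayed here. Your route --- Matrix--Tree theorem, the deletion--contraction split $\Tcal(G;\kappa)=\kappa_f\,\Tcal(G/f)+\Tcal(G\setminus f)$, the observation that the determinant ratio equals $(c_f^++r)/(c_f^-+r)$ and is decreasing in $r=C_{\mathrm{eff}}^{G\setminus f}(x,y)$, and finally Rayleigh's monotonicity law for the subgraph/contraction comparison --- is a complete, self-contained derivation in the same Kirchhoff-formula spirit as the paper's own Lemma~\ref{le:densities_comp}, and it buys a transparent electrical interpretation of why the inequalities point the way they do, together with a clean treatment of the degenerate cases ($r=0$ for a bridge, $r=\infty$ when $F$ already joins the endpoints of $f$). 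The alternative you sketch via Jacobi's formula, $\partial_{\kappa_f}\log\det\Delta^G_\kappa=R_{\mathrm{eff}}^G(x,y)$, integrated over $\kappa_f\in[c_f^-,c_f^+]$ against the Rayleigh comparison of resistances, is arguably the slickest version and avoids the self-loop bookkeeping entirely. The only implicit hypotheses you should state are that $f$ is an edge of all three graphs (i.e.\ $f\in E'$ and $f\notin F$), which is exactly how the lemma is invoked in Corollary~\ref{co:boundary_conditions}.
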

\begin{proof}
This is essentially the extension stated in Remark 4.11 of  Lemma 4.10 in \cite{phasetransitionclass}. The extension to the more general setting here is trivial.
\end{proof}
The previous estimates imply correlation inequalities for the measures $\discmu^{G,\bs\rho}$.
To shorten the notation we drop $\bs{\rho}$.
We  introduce the distribution under boundary conditions for a connected subgraph $G'=(V',E')$ of $G$.
For  $\alpha\in \R_+^E$ we define the measure $\discmu^{G,E',\alpha}=s_{G,E',\alpha}(\kappa)\bs\rho$ where the density $s_{G,E',\alpha}:\R_+^{E'}\to \R$ is given by 
\begin{align}\label{eq:def_boundary_condition}
s_{G,E',\alpha}(\kappa)=\frac{1}{Z} \frac{1}{\sqrt{\det \Delta^G_{(\alpha,\kappa)}}}
=
\frac{1}{Z} s_G( (\kappa_{E'}, \alpha_{E'^\crm})).
\end{align}
Here $(\alpha,\kappa)\in \R_+^E$ denotes the conductances given by $\kappa$ on $E'$
and by $\alpha$ on $E\setminus E'$. 
The definition implies that we have the following domain Markov property 
\begin{align}\label{eq:DMP}
\discmu^{G}(\kappa{_{E'}} = \cdot \mid \kappa{_{E\setminus E'}}=\alpha_{E\setminus E'})
=\discmu^{G,E',\alpha}(\cdot) \quad \text{a.s.}
\end{align}
We now state the consequences of Lemma \ref{le:det_incl} on stochastic ordering.
\begin{corollary}\label{co:boundary_conditions}
For a finite graph $G=(V,E)$, a connected subgraph $G'=(V',E')$, an edge subset $F\subset E$, and
configurations $\alpha\in S$  the following 
holds
\begin{align}\label{eq:stoch_dom_incl}
\discmu^{G'}\precsim \discmu^{G,E',\alpha},\qquad  
\discmu^{G,E\setminus F, \alpha}\precsim  \discmu^{G/F}.
\end{align}
\end{corollary}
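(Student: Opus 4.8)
The plan is to derive both orderings in \eqref{eq:stoch_dom_incl} from the single-edge criterion of Corollary \ref{co:stoch_dom}, using the Laplacian determinant ratios from Lemma \ref{le:det_incl} as the only analytic input. Note first that all four measures occurring in \eqref{eq:stoch_dom_incl} are defined on a common space: $\discmu^{G'}$ and $\discmu^{G,E',\alpha}$ both live on $\R_+^{E'}$, while $\discmu^{G,E\setminus F,\alpha}$ and $\discmu^{G/F}$ both live on $\R_+^{E\setminus F}$, since contracting the edges of $F$ removes precisely those edges from the edge set. Moreover each relevant density is of the form $s=Z^{-1}(\det\Delta_{\cdot})^{-1/2}$, which is strictly positive because the graph Laplacian of a connected graph with strictly positive conductances is positive definite on $H_0$. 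Hence the positivity hypothesis of Corollary \ref{co:stoch_dom} holds automatically.

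Next I would check the lattice condition \eqref{eq:lattice_FKG} for one density in each pair. This is immediate from Corollary \ref{co:Holley1}: applying it to the graph $G'$ shows that $s_{G'}$ satisfies the FKG lattice condition on $\R_+^{E'}$, and applying it to $G/F$ shows the same for $s_{G/F}$ on $\R_+^{E\setminus F}$. Fixing the remaining coordinates to the boundary values $\alpha$ merely restricts the inequality \eqref{eq:lattice_FKG} to a lower-dimensional box and therefore preserves it, so the boundary-conditioned densities $s_{G,E',\alpha}$ and $s_{G,E\setminus F,\alpha}$ inherit the lattice condition as well. In particular, in each pair at least one of the two densities satisfies \eqref{eq:lattice_FKG}, so it remains only to verify the single-edge inequality \eqref{eq:stoch_dom_crit1_repeated}.

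For the single-edge step, observe that in the ratio $s(\kappa_f^+)/s(\kappa_f^-)$ the normalization cancels, so \eqref{eq:stoch_dom_crit1_repeated} is equivalent to $s_2(\kappa_f^+)/s_2(\kappa_f^-)\geq s_1(\kappa_f^+)/s_1(\kappa_f^-)$. Writing these ratios out through $s=(\det\Delta_{\cdot})^{-1/2}$, the criterion for the pair $(\discmu^{G'},\discmu^{G,E',\alpha})$, with $\discmu_1=\discmu^{G'}$ and $\discmu_2=\discmu^{G,E',\alpha}$, becomes
\begin{align*}
\sqrt{\frac{\det\Delta^G_{\kappa^-}}{\det\Delta^G_{\kappa^+}}}\ \geq\ \sqrt{\frac{\det\Delta^{G'}_{\kappa^-}}{\det\Delta^{G'}_{\kappa^+}}},
\end{align*}
which after squaring and rearranging is exactly the first inequality of \eqref{eq:det_inequality_diff_sizes}, giving $\discmu^{G'}\precsim\discmu^{G,E',\alpha}$. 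Likewise, with $\discmu_1=\discmu^{G,E\setminus F,\alpha}$ and $\discmu_2=\discmu^{G/F}$, the criterion reduces to $\det\Delta^G_{\kappa^+}/\det\Delta^G_{\kappa^-}\geq \det\Delta^{G/F}_{\kappa^+}/\det\Delta^{G/F}_{\kappa^-}$, which is the second inequality of \eqref{eq:det_inequality_diff_sizes}, yielding $\discmu^{G,E\setminus F,\alpha}\precsim\discmu^{G/F}$. In both cases Corollary \ref{co:stoch_dom} then delivers the asserted domination.

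The only delicate point, and the one I regard as the main (mild) obstacle, is the bookkeeping: correctly assigning which measure plays the role of the dominated $s_1$ and which the dominating $s_2$, and tracking the direction of the determinant-ratio inequalities through the map $s=(\det\Delta_{\cdot})^{-1/2}$, whose negative power reverses the monotonicity. Once the orientation is pinned down as above, both halves of the corollary follow directly from Lemma \ref{le:det_incl} with no further analytic work.
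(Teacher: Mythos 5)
Your proof is correct and follows essentially the same route as the paper: the single-edge determinant-ratio inequalities of Lemma \ref{le:det_incl} translated into the criterion \eqref{eq:stoch_dom_crit1_repeated}, combined with the FKG lattice condition from Corollary \ref{co:Holley1} and then Corollary \ref{co:stoch_dom}. The orientations of all the inequalities through the map $s=(\det\Delta_{\cdot})^{-1/2}$ are tracked correctly; your version merely spells out the bookkeeping that the paper leaves implicit.
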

\begin{proof}
From Lemma \ref{le:det_incl} we obtain for $f\in E'$ and any $\kappa\in \R_+^{E'}$
\begin{align}\label{eq:lattice_incl1}
\frac{s_{G,E',\alpha}(\kappa^+)}{s_{G,E',\alpha}(\kappa^-)}
&\geq 
\frac{s_{G'}(\kappa^+)}{
s_{G'}(\kappa^-)}.
\end{align}
Similarly, Lemma \ref{le:det_incl} implies for $f\in E\setminus F$ and $\kappa \in \R_+^{E\setminus F}$
\begin{align}\label{eq:lattice_incl2}
\frac{s_{G/F}(\kappa^+)}{ 
s_{G/F}(\kappa^-)}
&\geq
\frac{s_{G,E\setminus F,\alpha}(\kappa^+)}{s_{G,E\setminus F,\alpha}(\kappa^-)}.
\end{align}
Then the lattice condition \eqref{eq:lattice_FKG}  and Corollary \ref{co:stoch_dom} imply
the result.
\end{proof}

The next step is to define 
infinite volume measures on $\Z^d$ for the measures $\discmu^{\Lambda, \bs\rho}$.
This requires some additional definitions.
Recall the definition of the $\sigma$-algebras $\discsigma_E$ for $E\subset\Edge(\Z^d)$ and note that there is a similar definition for general graphs which will be used in the following.
An event $A\subset \discsigma$ is called local if it measurable with respect 
to $\discsigma_E$ for some finite set $E$, i.e., $A$ depends only on finitely many edges. 
Similarly we define a local function as a function that is measurable with respect to
$\discsigma_E$ for a finite set $E$.
We say that a sequence of measures $\discmu_n$ on $\R_+^{\Edge(\Z^d)}$ 
converges in the topology of local convergence to a measure $\mu$ if $\mu_n(A)\to \mu(A)$ for all local events $A$. For a background on the choice of topologies in the context of Gibbs measures we refer to \cite{MR2807681}.
 The construction of the infinite volume states proceeds as in \cite{phasetransitionclass} and is similar to 
the construction for the random cluster model.
First, we define infinite volume limits of the finite volume distributions with wired and free boundary conditions.
Let us denote by $\Lambda_n=[-n,n]\cap \Z^d$ the
ball with radius $n$ in the maximum norm around the origin and we denote by $E_n=\Edge(\Lambda_n)$
the edges in $\Lambda_n$.
We introduce the shorthand   $\Lambda^w=\Lambda/\partial\Lambda$ 
for $\Lambda\subset \Z^d$ with wired boundary condition. In particular $\Lambda_n^w$ denotes the box with wired boundary conditions.
We define the finite volume measures on $\Lambda_n$ with free and wired boundary conditions respectively by
\begin{align}\label{eq:def_Gibbs_Lambda_n}
\discmu^{0, \bs\rho}_{n}=\discmu^{\Lambda_n,\bs\rho},\quad \qquad  \discmu^{1, \bs\rho}_{n}=\discmu^{\Lambda_n^w, \bs \rho}.
\end{align}
We again drop $\bs\rho$ from the notation in the following.
From Corollary \ref{co:boundary_conditions} and equation \eqref{eq:DMP}  we conclude that
for any measurable and increasing event $A$ depending only on edges in $E_n$
\begin{align}\label{eq:compare_bc_n_n+1}
\discmu_{n+1}^{0}(A)=\discmu^{\Lambda_{n+1}}(A)=\discmu^{\Lambda_{n+1}}( \discmu^{\Lambda_{n+1},E_n, \kappa}(A))\geq  \discmu^{\Lambda_n}(A)=\discmu_n^{0}(A).
\end{align}
We conclude that for any increasing event $A$ depending only on finitely many edges
the limits $\lim_{n\to \infty} \discmu_{n}^{0}(A)$ and similarly $\lim_{n\to \infty} \discmu_{n}^{1}(A)$ exist.
Using that events that are intersections of intervals of the form $\k_e\in [a, \infty)$
 generate the Borel $\sigma$-algebra we conclude that $\discmu^{0,\bs{\rho}}_n$ converges in the topology of local convergence to a measure $\discmu^{0,\bs{\rho}}$. Moreover, the monotonicity results imply that the same limit is obtained for any sequence $\Lambda\to \Z^d$.
\begin{lemma}\label{le:inf_volume_measures}
The measure $\discmu^{0,\bs\rho}$ and $\discmu^{1,\bs{\rho}}$ satisfy the FKG-inequality and
\begin{align}
\discmu^{0,\bs \rho}\precsim \discmu^{1,\bs\rho}.
\end{align}
\end{lemma}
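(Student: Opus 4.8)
The plan is to reduce both assertions to the corresponding finite-volume statements of this section and then pass to the local limit. For the FKG inequality, recall that for each $n$ the finite-volume measures $\discmu_n^0=\discmu^{\Lambda_n}$ and $\discmu_n^1=\discmu^{\Lambda_n^w}$ satisfy the FKG inequality by Corollary \ref{co:Holley1}. I would fix two increasing local events $A,B$, say measurable with respect to $\discsigma_{E_m}$, and note that for every $n\ge m$ the events $A$, $B$ and $A\cap B$ are increasing and local, so Corollary \ref{co:Holley1} gives $\discmu_n^0(A\cap B)\ge \discmu_n^0(A)\,\discmu_n^0(B)$. Letting $n\to\infty$ and using that $\discmu_n^0$ converges on increasing local events to $\discmu^0$ yields $\discmu^0(A\cap B)\ge \discmu^0(A)\,\discmu^0(B)$, and likewise for $\discmu^1$. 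The FKG inequality for increasing integrable functions then follows by writing an increasing function as a superposition of indicators of increasing events (layer-cake) and a truncation/monotone-convergence argument to remove boundedness and locality.

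For the stochastic domination I would first establish the finite-volume comparison $\discmu_n^0\precsim\discmu_n^1$ for every $n$, viewing both as measures on $\R_+^{E_n}$ with the same reference product measure $\bs\rho$. Since $s_{\Lambda_n^w}$ satisfies the lattice condition \eqref{eq:lattice_FKG} by Corollary \ref{co:Holley1}, Corollary \ref{co:stoch_dom} reduces the claim to the single-edge inequality \eqref{eq:stoch_dom_crit1_repeated}, which after inserting $s=1/\sqrt{\det\Delta}$ is equivalent to
\begin{align}
\frac{\det\Delta^{\Lambda_n}_{\kappa_f^+}}{\det\Delta^{\Lambda_n}_{\kappa_f^-}}
\geq
\frac{\det\Delta^{\Lambda_n^w}_{\kappa_f^+}}{\det\Delta^{\Lambda_n^w}_{\kappa_f^-}}
\end{align}
for every edge $f\in E_n$. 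The wired graph $\Lambda_n^w=\Lambda_n/\partial\Lambda_n$ is obtained from $\Lambda_n$ by contracting the edges of a spanning tree $F$ of the connected boundary $\partial\Lambda_n$; one may choose $F$ to avoid the given edge $f$, and the boundary edges that become self-loops do not affect the graph Laplacian, so $\det\Delta^{\Lambda_n/F}_\kappa=\det\Delta^{\Lambda_n^w}_\kappa$. The displayed inequality is then exactly the middle-to-right estimate of Lemma \ref{le:det_incl} applied with $G=\Lambda_n$, which gives $\discmu_n^0\precsim\discmu_n^1$.

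Finally I would pass to the limit: for an increasing local event $A$, measurable with respect to $\discsigma_{E_m}$, and every $n\ge m$ the finite-volume domination gives $\discmu_n^0(A)\le\discmu_n^1(A)$, and sending $n\to\infty$ yields $\discmu^0(A)\le\discmu^1(A)$. As increasing local events suffice to characterize stochastic domination on $\R_+^{\Edge(\Z^d)}$, this gives $\discmu^0\precsim\discmu^1$. I expect the main obstacle to be bookkeeping rather than conceptual: the clean identification of wiring with edge contraction (including the harmless self-loops and the choice of a boundary spanning tree avoiding $f$) so that Lemma \ref{le:det_incl} applies verbatim, and the routine but necessary upgrade from increasing local events to the full FKG inequality and to stochastic domination via approximation and the integrability hypotheses. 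Everything else is a direct invocation of the finite-volume results already proved in this section.
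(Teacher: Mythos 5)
Your proposal is correct and follows essentially the same route as the paper, which derives the lemma from Corollary \ref{co:Holley1} (finite-volume FKG via the lattice condition), the determinant monotonicity of Lemma \ref{le:det_incl} combined with Corollary \ref{co:stoch_dom} (giving $\discmu_n^0\precsim\discmu_n^1$, the content of Corollary \ref{co:boundary_conditions}), and a limiting argument over increasing local events. The paper states this in one line; you have simply supplied the bookkeeping (identification of wiring with edge contraction, approximation of increasing functions) that it leaves implicit.
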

\begin{proof}
This is a consequence of Corollary \ref{co:Holley1} and Corollary \ref{co:boundary_conditions}
and a limiting argument.
\end{proof}
Now that we have shown the correlation results for arbitrary measures $\bs\rho$ 
we  restrict our attention to the disordered case we are actually interested in.

For a finite graph $G$,  $\xi\in \R^{\Edge(G)}$, and $\rho \in \Mcal(\R_+)$ with support in $[\lambda^{-1}, \lambda]$ and positive mass  we define the distribution 
\begin{align}\label{eq:disc_disorder}
\discmu[\xi](\d\k)=\frac{1}{Z} s_G(\k) \prod_{e\in \Edge(G)} e^{\xi_e\k_e }\, \rho(\d\kappa_e).
\end{align}
For $\Z^d$ we can now define the infinite volume limits $\xi\to \discmu^0[\xi]$ and $\xi\to\discmu^1[\xi]$ as above. We call those measures disordered Gibbs measure
although we refrain from defining a specification that turns them into actual Gibbs measures (cf.\ \cite{phasetransitionclass} for a definition of Gibbs measures for $\rho$ as in \eqref{eq:defrhospecial}).
 We show that they are shift covariant as described in the following lemma. 
\begin{lemma}
The disordered Gibbs measures $\xi\to \discmu^{0}[\xi]$ and $\xi\to \discmu^{1}[\xi]$ are measurable functions that are shift covariant in the sense that  for any continuous local function $f:\R_+^{\Edge(\Z^d)}\to \R$
\begin{align}\label{eq:discmucov1}
\discmu[\xi](f\circ \tau_x)&=\discmu[\tau_x \xi](f)
\end{align}
and, for all $\delta\xi, \xi \in \R^{\Edge(\Z^d)}$  where $\delta\xi$ has finite support and  all bounded, continuous, and
local functions $f:\R_+^{\Edge(\Z^d)}\to \R$
\begin{align}\label{eq:discmucov2}
\discmu[\xi+\delta\xi ](f)=
\frac{\discmu[\xi](f(\k)e^{\sum_{e\in \Edge(\Lambda)} (\delta\xi)_e\kappa_e})}{\discmu[\xi](e^{\sum_{e\in \Edge(\Lambda)} (\delta\xi)_e\kappa_e})}.
\end{align}
\end{lemma}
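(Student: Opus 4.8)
The plan is to establish all three assertions (measurability, the covariance \eqref{eq:discmucov1}, and the tilting relation \eqref{eq:discmucov2}) first at finite volume, where they are direct manipulations of the explicit densities \eqref{eq:disc_disorder}, and then to transfer them to the infinite-volume measures $\discmu^0[\xi]$ and $\discmu^1[\xi]$ via the convergence in the topology of local convergence constructed above. Two features make the limiting step clean. First, since $\rho$ is supported in $[\lambda^{-1},\lambda]$, every configuration $\kappa$ has all coordinates confined to this compact interval, so every continuous local function is automatically bounded and hence controlled by local convergence; in particular the distinction between \eqref{eq:discmucov1} and \eqref{eq:discmucov2} (continuous versus bounded continuous) is immaterial. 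Second, the monotone construction shows the limit is the same along any exhausting sequence $\Lambda\to\Z^d$. I treat $\discmu^0$ and $\discmu^1$ simultaneously, noting that wiring commutes with the shift, $\tau_x(\Lambda_n^w)=(\tau_x\Lambda_n)^w$.

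For the covariance \eqref{eq:discmucov1}, I first record the finite-volume identity
\[
\discmu^{\Lambda}[\xi](f\circ\tau_x)=\discmu^{\tau_x\Lambda}[\tau_x\xi](f),
\]
which follows from the change of variables $\kappa\mapsto\tau_x\kappa$ carrying $\Edge(\Lambda)$ onto $\Edge(\tau_x\Lambda)$: the product measure $\prod_e\rho(\d\kappa_e)$ is preserved because $\rho$ is identical on every edge, the disorder factor transforms as $\xi_e\kappa_e\mapsto(\tau_x\xi)_{e'}\kappa'_{e'}$, and $s_\Lambda(\kappa)=s_{\tau_x\Lambda}(\tau_x\kappa)$ because $\tau_x$ is a graph isomorphism and hence preserves $\det\Delta_\kappa$. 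Taking $\Lambda=\Lambda_n$ and letting $n\to\infty$, the left side converges to $\discmu^0[\xi](f\circ\tau_x)$; on the right side $(\tau_x\Lambda_n)_n$ is again an exhausting sequence of boxes, so by sequence-independence of the limit it converges to $\discmu^0[\tau_x\xi](f)$, yielding \eqref{eq:discmucov1}.

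For the tilting relation \eqref{eq:discmucov2}, at finite volume the density of $\discmu^{\Lambda}[\xi+\delta\xi]$ differs from that of $\discmu^{\Lambda}[\xi]$ only by the factor $\exp(\sum_{e\in\Edge(\Lambda)}(\delta\xi)_e\kappa_e)$, so for any $\Lambda$ containing $\mathrm{supp}(\delta\xi)$ one has directly
\[
\discmu^{\Lambda}[\xi+\delta\xi](f)=\frac{\discmu^{\Lambda}[\xi]\big(f\,e^{\sum_e(\delta\xi)_e\kappa_e}\big)}{\discmu^{\Lambda}[\xi]\big(e^{\sum_e(\delta\xi)_e\kappa_e}\big)}.
\]
Because $\delta\xi$ has finite support and conductances lie in $[\lambda^{-1},\lambda]$, both $e^{\sum_e(\delta\xi)_e\kappa_e}$ and $f\,e^{\sum_e(\delta\xi)_e\kappa_e}$ are bounded continuous local functions, so numerator and denominator converge under local convergence, the denominator staying bounded away from $0$; the identity therefore passes to the limit. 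Finally, for measurability I use that for each increasing local event $A$ the map $\xi\mapsto\discmu^{\Lambda_n}[\xi](A)$ is continuous, being a ratio of integrals over the compact set $[\lambda^{-1},\lambda]^{\Edge(\Lambda_n)}$ of integrands depending continuously on $\xi$ (dominated convergence); since $\discmu^{\Lambda_n}[\xi](A)\to\discmu^0[\xi](A)$ for every $\xi$, the limit $\xi\mapsto\discmu^0[\xi](A)$ is measurable, and as such events generate the underlying $\sigma$-algebra, the map $\xi\mapsto\discmu^0[\xi]$ is measurable (and likewise $\discmu^1$). The only genuinely delicate point is the covariance limit, where one must invoke that the translated boxes $\tau_x\Lambda_n$ form an admissible exhausting sequence and that the local limit is independent of the chosen sequence; everything else reduces to the boundedness of the state space.
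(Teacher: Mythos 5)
Your proposal is correct and follows essentially the same route as the paper's proof: establish the finite-volume identities $\discmu^{\Lambda_n}[\xi](f\circ\tau_x)=\discmu^{\Lambda_n+x}[\tau_x\xi](f)$ and the tilting formula directly from the explicit density \eqref{eq:disc_disorder}, then pass to the limit using sequence-independence of the local limit and the compact support of $\rho$. Your additional remarks (continuity in $\xi$ at finite volume, the denominator staying bounded away from zero) are refinements of, not departures from, the paper's argument.
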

\begin{proof}
We prove the result for $\discmu^0$ the proof for $\discmu^1$ is similar.
First, we note that the measurability of the functions $\xi\to \discmu^0[\xi]$ is clear since this is the limit of measurable functions (the limiting measures were constructed using the deterministic sequence $\Lambda_n\to \Z^d$).
To prove \eqref{eq:discmucov1} we note that if $f$ only depends on the edges in 
$\Lambda_m$ and $n> m + |x|_\infty$ then \eqref{eq:disc_disorder} implies
\begin{align}
\discmu^{\Lambda_n}[\xi]( f\circ \tau_{x}) 
=\discmu^{\Lambda_n + x }[\tau_x \xi] (f).
\end{align}
The claim follows by sending $n\to \infty$ and the observation that the limit is independent of the chosen sequence $\Lambda\to\Z^d$.
Equation \eqref{eq:discmucov2} follows from a simple calculation.
Assume that $\mathrm{supp}\; \delta\xi\subset \Edge(\Lambda)$ and $f$ only depends on the edges in $\Edge(\Lambda)$. Then we calculate
\begin{align}
\begin{split}
\discmu^\Lambda[\xi+\delta\xi ](f)
&= 
\frac{\int f(\kappa) s_\Lambda(\kappa) e^{\sum_{e\in \Edge(\Lambda)} (\xi_e+(\delta\xi)_e)\kappa_e} \prod_{e\in \Edge(\Lambda)} \rho(\d\k_e)
}{
\int  s_\Lambda(\kappa) e^{\sum_{e\in \Edge(\Lambda)} (\xi_e+(\delta\xi)_e)\kappa_e} \prod_{e\in \Edge(\Lambda)} \rho(\d\k_e)
}
\\
&=
\frac{\int e^{\sum_{e\in \Edge(\Lambda)} (\delta\xi)_e\kappa_e}f(\kappa) s_\Lambda(\kappa) e^{\sum_{e\in \Edge(\Lambda)} \xi_e\kappa_e} \prod_{e\in \Edge(\Lambda)} \rho(\d\k_e)
}{
\int   e^{\sum_{e\in \Edge(\Lambda)} (\delta\xi)_e\kappa_e} s_\Lambda(\kappa) e^{-\sum_{e\in \Edge(\Lambda)} \xi_e\kappa_e} \prod_{e\in \Edge(\Lambda)} \rho(\d\k_e)
}
\\
&=
\frac{\discmu^\Lambda[\xi](f(\kappa)e^{\sum_{e\in \Edge(\Lambda)} (\delta\xi)_e\kappa_e})}
{\discmu^\Lambda[\xi](e^{\sum_{e\in \Edge(\Lambda)} (\delta\xi)_e\kappa_e})}.
\end{split}
\end{align}
The claim now follows as $\Lambda\to \Z^d$.
\end{proof}

\section{Aizenman-Wehr argument for  random conductance model}\label{sec:AW}
In this section we will apply the Aizenman-Wehr argument to the random conductance model to conclude that for almost all realizations $\xi$ the two 
measures $\discmu^1[\xi]$ and $\discmu^0[\xi]$ agree.
Since the state space for the random conductance model is compact the original
proof in \cite{MR1060388} essentially applies to our setting. 
Let us refer also to the slightly simplified and  streamlined presentation can be found  in \cite{MR2252929}.
The following theorem is the main result of this section.
Recall that we consider disorder $(\Omega,\Bcal,\P)$ where $\Omega=\R^{\Edge(\Z^d)}$
such that for $\xi\sim \P$ the  random variables $\xi_e$ are i.i.d.\ and they are  not almost surely constant.
\begin{theorem}\label{th:discrete_disorder}
Let  $(\Omega,\Bcal,\P)$ be disorder as above and $d=2$. Then 
\begin{align}
\discmu^0[\xi]=\discmu^1[\xi]
\end{align}
holds for $\P$-almost all $\xi\in \Omega$.
\end{theorem}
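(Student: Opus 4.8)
The plan is to implement the Aizenman--Wehr martingale/ergodicity argument for the compact-state-space random conductance model, exploiting the monotonicity $\discmu^0[\xi]\precsim\discmu^1[\xi]$ already established in Lemma~\ref{le:inf_volume_measures}. Since both measures are stochastically ordered, it suffices to show that their expectations agree on a separating family of increasing local functions; I would fix an increasing bounded local function $f$ (say depending only on edges in $\Edge(\Lambda_m)$) and prove $\E\left(\discmu^1[\xi](f)-\discmu^0[\xi](f)\right)=0$, since the nonnegativity of the quenched difference then forces $\discmu^0[\xi](f)=\discmu^1[\xi](f)$ for $\P$-almost every $\xi$. Running this over a countable separating family yields $\discmu^0[\xi]=\discmu^1[\xi]$ almost surely. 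The key object is the free-energy-type quantity, i.e.\ the difference in (log-)partition functions between wired and free boundary conditions on $\Lambda_n$, whose derivative in the boundary data encodes the gap between the two infinite-volume measures.

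The core of the Aizenman--Wehr scheme is a variance estimate. First I would introduce the fluctuating free energy $F_n(\xi)=\log Z_n^1[\xi]-\log Z_n^0[\xi]$ (the difference of wired and free partition functions on $\Lambda_n$) and decompose its deviation from its mean as a sum of martingale increments indexed by the i.i.d.\ disorder variables $\xi_e$, $e\in\Edge(\Lambda_n)$. Each increment measures the effect of resampling a single $\xi_e$, and using the explicit dependence of $\discmu[\xi]$ on $\xi$ recorded in \eqref{eq:discmucov2} together with the compactness of the conductance state space $[\lambda^{-1},\lambda]$, each increment is bounded uniformly. The martingale orthogonality then gives $\mathrm{Var}(F_n)\le C\,|\Edge(\Lambda_n)|=O(n^2)$ in $d=2$. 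The complementary lower bound is the Imry--Ma mechanism: if the two states genuinely differed, the boundary term distinguishing wired from free conditions would couple linearly to the bulk disorder on the order of $|\partial\Lambda_n|=O(n)$, so that the standard deviation of $F_n$ would have to grow \emph{faster} than the boundary perimeter permits. Quantitatively, one shows that a nonvanishing gap $g=\E\left(\discmu^1[\xi](f)-\discmu^0[\xi](f)\right)>0$ would force the variance of the relevant order parameter to grow like the boundary surface, and in $d=2$ the surface and the bulk CLT fluctuations scale identically, producing a contradiction with the $O(n^2)$ upper bound after dividing by the appropriate power of the volume.

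Concretely, I would tilt the disorder by a small deterministic increment $\delta\xi$ supported near the boundary and use shift covariance \eqref{eq:discmucov1} together with ergodicity of the product measure $\P$ to argue that the annealed order parameter is translation invariant; the tilt changes the free energy by an amount controlled on one side by the gap $g$ (via the domain Markov property \eqref{eq:DMP} and the FKG monotonicity) and on the other by the martingale variance bound. Balancing these two estimates in $d=2$ pins $g=0$. The passage from ``annealed gap is zero'' to ``quenched measures agree almost surely'' then uses precisely the stochastic domination: $\discmu^1[\xi](f)-\discmu^0[\xi](f)\ge 0$ pointwise, so vanishing expectation gives almost sure equality.

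The main obstacle I expect is the rigorous lower bound, i.e.\ converting a nonzero annealed gap between $\discmu^0$ and $\discmu^1$ into a genuinely boundary-order fluctuation of the free energy. In the classical random field Ising setting this step relies on translation-covariance of the metastates and on the GKS/FKG correlation inequalities to show that the boundary perturbation does not get washed out; here one must substitute the FKG inequality \eqref{eq:FKG_measure} and the boundary comparison \eqref{eq:stoch_dom_incl} for the magnetization monotonicity, and one must verify that the order parameter defined through $f$ is genuinely sensitive to the boundary condition whenever $\discmu^0\neq\discmu^1$. Controlling this sensitivity uniformly, and ensuring the martingale increments remain summable so that the variance bound is exactly $O(|\Edge(\Lambda_n)|)$ rather than larger, is where the ergodicity of $\P$ and the compactness of the state space must be used most carefully.
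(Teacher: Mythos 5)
Your overall blueprint coincides with the paper's: an Aizenman--Wehr fluctuation argument for the free-energy difference between wired and free boundary conditions, followed by an upgrade from the vanishing of the \emph{annealed} gap to \emph{quenched} equality via the stochastic ordering $\discmu^0[\xi]\precsim\discmu^1[\xi]$ and a monotone coupling. Two remarks on the set-up before the main issue. First, the natural order parameter here is forced on you: the $\xi_f$-derivative of the generating function is exactly $\discmu[\xi](\kappa_f)$ (see \eqref{eq:derivativeG}), not $\discmu[\xi](f)$ for a general increasing $f$, so your plan to run the argument over a separating family of increasing local functions does not match the free energy you differentiate; the paper simply takes $\pi_e=\lambda^{-1}(\discmu^1[\xi](\kappa_e)-\discmu^0[\xi](\kappa_e))$ and closes the argument with Strassen's theorem from the single-edge identity $\discmu^1[\xi](\kappa_e)=\discmu^0[\xi](\kappa_e)$. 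Second, the paper works with conditional expectations $F_\Lambda=\E\bigl(G^{\discmu^1}_\Lambda-G^{\discmu^0}_\Lambda\mid\Bcal_\Lambda\bigr)$ of generating functions built from the \emph{infinite-volume} measures, which is what makes condition \eqref{eq:cond1} of Proposition~\ref{prop:Aizenman} hold exactly rather than only in the limit; your finite-volume $\log Z^1_n-\log Z^0_n$ would require an extra passage to the limit. These are repairable.

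The genuine gap is the upper bound, which is where all the model-specific work lives. You propose $\mathrm{Var}(F_n)\le C\,|\Edge(\Lambda_n)|=O(n^2)$ from a bounded-increment martingale decomposition. That bound is true but useless here: a nonzero gap produces a Gaussian lower bound on the fluctuations of order $b^2|\Lambda_n|$, which is the \emph{same} bulk order $O(n^2)$, so there is no contradiction, and your stated conclusion (``contradiction with the $O(n^2)$ upper bound'') does not follow. What the argument actually needs is a \emph{deterministic, pointwise} bound of \emph{boundary} order, $|\tilde F_{\Lambda_n}(\xi)|\le C|\partial\Lambda_n|$ uniformly in $\xi$; in $d=2$ this gives $\E\,e^{t\tilde F_{\Lambda_n}/\sqrt{|\Lambda_n|}}\le e^{C't}$, which is incompatible with the lower bound $e^{t^2b^2/2}$ as $t\to\infty$ unless $b=0$. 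Establishing this pointwise bound for the present model is nontrivial precisely because the wired and free measures have densities $s_{\Lambda^w}$ and $s_{\Lambda}$ involving determinants of different Laplacians: one must show
\begin{align*}
\Bigl(\tfrac{|\Lambda^w|}{|\Lambda|}\Bigr)^{1/2}\bigl(2^{2d}\lambda\bigr)^{\frac12|\partial\Lambda|}\le\frac{s_{\Lambda^w}(\kappa)}{s_{\Lambda}(\kappa)}\le\bigl(2^{2d}\lambda\bigr)^{\frac12|\partial\Lambda|}
\end{align*}
uniformly over $\kappa\in[\lambda^{-1},\lambda]^{\Edge(\Lambda)}$, which the paper proves in Lemma~\ref{le:densities_comp} via Kirchhoff's matrix--tree theorem, by constructing maps between spanning trees of $\Lambda$ and of $\Lambda^w$ that alter only $O(|\partial\Lambda|)$ edges, and then combines with the FKG and boundary-domination inequalities in Lemma~\ref{le:upper}. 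Your proposal gestures at this (``faster than the boundary perimeter permits'') but never supplies the estimate, and without it the $d=2$ scaling comparison has nothing to bite on.
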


The general strategy to prove this theorem is to show that the effect of the boundary
condition for a volume $\Lambda$ is of order $|\partial\Lambda|$ while
the disorder has by the central limit theorem an effect of order $\sqrt{|\Lambda|}$.
This approach is implemented by controlling the fluctuations of suitable free energies.
 We define the generating function $G_\Lambda$ for a disordered Gibbs measure $\xi\to\discmu[\xi]$ by
 \begin{align}
 G^{\discmu[\xi]}_\Lambda = - \ln \discmu[\xi]\left( e^{-\sum_{e\in \Edge(\Lambda)} \xi_e\kappa_e}\right).
 \end{align}
Suppose that $\discmu$ is a covariant Gibbs measure. Then \eqref{eq:discmucov2} implies 
\begin{align}
G^{\discmu[\xi]}_\Lambda = - \ln\left(\frac{\discmu[\xi-\xi_{\Lambda}](1)}{\discmu[\xi-\xi_\Lambda]\left(e^{\sum_{e\in \Edge(\Lambda)}\xi_e\kappa_e}\right)}\right)
= \ln\discmu[\xi-\xi_\Lambda]\left(e^{ \sum_{e\in \Edge(\Lambda)}\xi_e\kappa_e}\right).
\end{align}
Here $\xi_\Lambda$ denotes the restriction of $\xi$ to $\Edge(\Lambda)$ which will
be extended by zero to $\E(\Z^2)$ if necessary. 
Differentiating with respect to $\xi_f$ for $f\in \Edge(\Lambda)$ we obtain
\begin{align}\label{eq:derivativeG}
\partial_{\xi_f}G^{\discmu[\xi]}_\Lambda = 
\frac{\discmu[\xi-\xi_\Lambda]\left(\k_f e^{\sum_{e\in \Edge(\Lambda)}\xi_e\k_e}\right)}{\discmu[\xi-\xi_\Lambda]\left( e^{\sum_{e\in \Edge(\Lambda)}\xi_e\k_e}\right)}
=
\discmu[\xi](\kappa_f)
\end{align}
where we used again \eqref{eq:discmucov2} in the second step.
Let us define the   $\sigma$-algebra $\Bcal_\Lambda$ that is generated by $(\xi_e)_{e\in \Edge(\Lambda)}$.
Since we want to prove that $\discmu^0[\xi]=\discmu^1[\xi]$ for $\P$-almost all $\xi$ we introduce the function $F_\Lambda:\Omega\to\R$ given by the conditional expectation
\begin{align}
F_\Lambda(\xi) = \mathbb{E}\left( G^{\discmu^1}_\Lambda - G^{\discmu^0}_\Lambda\vert \mathcal{B}_\Lambda\right)(\xi)
\end{align}
Moreover we define the centered version
\begin{align}
\tilde F_\Lambda = F_\Lambda -\E (F_\Lambda).
\end{align}
We  prove a deterministic upper bound and a stochastic lower bound for this quantity. 
Let us start with the upper bound which is model dependent. Here we need to exploit the specific structure of the model which in our case does not directly fit in the standard setting of bounded nearest neighbor interactions.
\begin{lemma}\label{le:upper}
For any $\Lambda\subset \Z^ d$ finite with $|\partial\Lambda|\leq \tfrac12|\Lambda|$ and
for all $\xi\in \R^{\Edge(\Z^d)}$
\begin{align}
|\tilde F_\Lambda(\xi)|\leq 2(2d+1+\ln\lambda)|\partial\Lambda|.
\end{align}
\end{lemma}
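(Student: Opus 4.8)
The plan is to control the difference $\tilde F_\Lambda = F_\Lambda - \E(F_\Lambda)$ by first rewriting $F_\Lambda$ as an integral of the derivatives $\partial_{\xi_f} G^{\discmu[\xi]}_\Lambda$ computed in \eqref{eq:derivativeG}, and then exploiting the fact that these derivatives equal $\discmu[\xi](\kappa_f)$, which take values in the compact interval $[\lambda^{-1},\lambda]$ since $\rho$ has support in $[\lambda^{-1},\lambda]$. The key point, which replaces the usual bound on nearest-neighbor interactions, is that the influence of the boundary condition (wired versus free) on the conductance expectations $\discmu^1[\xi](\kappa_f) - \discmu^0[\xi](\kappa_f)$ propagates only a bounded amount in total, and this total is governed by $|\partial\Lambda|$ rather than $|\Lambda|$.

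First I would write, using $\tilde F_\Lambda(\xi) = F_\Lambda(\xi) - \E(F_\Lambda)$ and the fact that conditional expectation is an average over the disorder outside $\Lambda$, that it suffices to bound the oscillation of $G^{\discmu^1}_\Lambda - G^{\discmu^0}_\Lambda$ as a function of $(\xi_e)_{e\in\Edge(\Lambda)}$. Concretely, I would express the difference $G^{\discmu^1}_\Lambda(\xi) - G^{\discmu^1}_\Lambda(\xi')$ for two configurations $\xi,\xi'$ agreeing off $\Edge(\Lambda)$ as a line integral $\int_0^1 \sum_{f\in\Edge(\Lambda)} (\xi_f - \xi'_f)\, \partial_{\xi_f} G^{\discmu^1}_\Lambda(\xi^t)\,\d t$, and use that each $\partial_{\xi_f} G^{\discmu^1}_\Lambda = \discmu^1[\xi^t](\kappa_f) \in [\lambda^{-1},\lambda]$. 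This already bounds $G^{\discmu^1}_\Lambda - G^{\discmu^0}_\Lambda$ crudely, but to get the sharp $|\partial\Lambda|$ scaling I would instead bound the \emph{difference of the two boundary conditions}: by the domain Markov property \eqref{eq:DMP} and the stochastic ordering $\discmu^0 \precsim \discmu^1$ from Lemma~\ref{le:inf_volume_measures}, the quantity $\discmu^1[\xi](\kappa_f) - \discmu^0[\xi](\kappa_f)$ is nonnegative and its sum over all $f\in\Edge(\Lambda)$ telescopes against boundary contributions. Summing $\discmu^1[\xi](\kappa_f)-\discmu^0[\xi](\kappa_f)$ over edges and using that the FKG/monotonicity structure confines the discrepancy to a neighborhood of $\partial\Lambda$ yields the factor $|\partial\Lambda|$, while each term contributes at most $\lambda - \lambda^{-1}$ plus the $\ln\lambda$ term coming from normalizing the reference measure $\rho$ against $e^{\xi_e\kappa_e}$.

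The explicit constant $2(2d+1+\ln\lambda)|\partial\Lambda|$ I would assemble as follows: the $\ln\lambda$ piece arises because each factor $e^{\xi_e\kappa_e}$ with $\kappa_e\in[\lambda^{-1},\lambda]$ shifts the generating function by at most $\ln\lambda$ per boundary edge; the $2d$ piece counts the bounded number of edges incident to each boundary vertex (degree $2d$ in $\Z^d$); and the overall factor $2$ absorbs both the centering $-\E(F_\Lambda)$ and the comparison between wired and free conditions. The hypothesis $|\partial\Lambda|\leq\tfrac12|\Lambda|$ ensures the geometric estimates relating edge counts near the boundary to $|\partial\Lambda|$ are valid.

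The main obstacle I anticipate is making rigorous the claim that the boundary-condition discrepancy $\sum_{f\in\Edge(\Lambda)} (\discmu^1[\xi](\kappa_f)-\discmu^0[\xi](\kappa_f))$ is controlled by $|\partial\Lambda|$ rather than $|\Lambda|$. Unlike the classical Aizenman--Wehr setting with short-range bounded interactions, here the coupling between conductances is mediated by the nonlocal determinant $s_G(\kappa)=(\det\Delta_\kappa)^{-1/2}$, so one cannot simply invoke finite interaction range. The resolution should come from the determinant inequalities of Lemma~\ref{le:det_incl}: the ratio $\det\Delta^{G}_{\kappa^+}/\det\Delta^{G}_{\kappa^-}$ is squeezed between the subgraph and contracted-graph versions, and this sandwiching localizes the effect of changing boundary conditions to edges near $\partial\Lambda$. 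Turning this determinant sandwich into a summable bound on the conductance discrepancies is the technical heart of the argument.
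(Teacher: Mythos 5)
Your overall strategy diverges from the paper's and, as written, contains a genuine gap at exactly the point you flag as ``the technical heart.'' There are two problems. First, the line-integral/oscillation route cannot work as stated: writing $F_\Lambda(\xi)-F_\Lambda(\xi')$ as $\int_0^1\sum_{f\in\Edge(\Lambda)}(\xi_f-\xi'_f)\,\partial_{\xi_f}(\cdot)\,\d t$ and bounding each derivative by $\lambda$ produces a bound proportional to $\sum_{f\in\Edge(\Lambda)}|\xi_f-\xi'_f|$, which is neither of order $|\partial\Lambda|$ (the sum has on the order of $d|\Lambda|$ terms) nor even finite uniformly in $\xi$, since the disorder variables are unbounded. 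Second, and more seriously, the claim that $\sum_{f\in\Edge(\Lambda)}\bigl(\discmu^1[\xi](\kappa_f)-\discmu^0[\xi](\kappa_f)\bigr)$ ``telescopes against boundary contributions'' and is confined by FKG to a neighborhood of $\partial\Lambda$ is unsubstantiated and essentially circular: the assertion that the wired/free discrepancy of bulk observables is negligible is precisely what the Aizenman--Wehr argument is designed to \emph{prove} (it is the content of Theorem~\ref{th:discrete_disorder}), so it cannot serve as an input to the deterministic upper bound. Lemma~\ref{le:det_incl}, which you invoke, only gives monotonicity of determinant ratios under single-edge perturbations; it does not localize anything.

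The paper's proof avoids both issues by bounding $\bigl|G_\Lambda^{\discmu^1[\xi]}-G_\Lambda^{\discmu^0[\xi]}\bigr|$ pointwise in $\xi$ with no differentiation in the disorder. After splitting $\xi_\Lambda$ into positive and negative parts (so that the relevant exponentials become increasing functions of $\kappa$), the ratio of the wired and free expectations is bounded below by $1$ via $\discmu^0\precsim\discmu^1$, and bounded above by first sandwiching the infinite-volume measures between the finite-volume free and wired measures on $\Lambda$ (Corollary~\ref{co:boundary_conditions}) and then comparing the two \emph{densities} $s_\Lambda$ and $s_{\Lambda^w}$ pointwise: Lemma~\ref{le:densities_comp} shows, via Kirchhoff's matrix-tree theorem and an explicit spanning-tree surgery between $\Lambda$ and $\Lambda^w$, that the ratio $s_{\Lambda^w}(\kappa)/s_\Lambda(\kappa)$ is controlled above and below by $(2^{2d}\lambda)^{O(|\partial\Lambda|)}$ uniformly in $\kappa\in[\lambda^{-1},\lambda]^{\Edge(\Lambda)}$. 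A uniform pointwise ratio bound on densities immediately bounds the ratio of expectations of any positive function, which is what yields the $|\partial\Lambda|$ scaling and the explicit constant. This density comparison is the ingredient missing from your sketch; without it, or a substitute for it, the proposal does not close.
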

\begin{proof}
We are going to show that for all $\xi \in \R^{\Edge(\Z^d)}$
\begin{align}\label{eq:upper_bound1}
\left| {G_\Lambda^{\discmu^1[\xi]}}- {G_\Lambda^{\discmu^0[\xi]}} \right|=\left|\ln\left( \frac{\discmu^0[\xi]\left( e^{-\sum_{e\in \Edge(\Lambda)} \xi_e\kappa_e}\right)}{\discmu^1[\xi]\left( e^{-\sum_{e\in \Edge(\Lambda)} \xi_e\kappa_e}\right)}\right)\right|\leq (2d+1+\ln \lambda)|\partial\Lambda|.
\end{align}
This directly implies the result of the lemma because the conditional expectation is a contraction
and thus $|\E(F_\Lambda)|\leq \sup_{\xi}|F_\Lambda(\xi)|\leq \sup \left| {G_\Lambda^{\discmu^1[\xi]}}-  {G_\Lambda^{\discmu^0[\xi]}} \right|$.
To prove \eqref{eq:upper_bound1} we observe that by equation~\eqref{eq:discmucov2}
we have for a shift covariant Gibbs measure using the notation $\xi_\Lambda^+=\xi_\Lambda \wedge 0$
\begin{align}
\discmu[\xi]\left( e^{-\sum_{e\in \Edge(\Lambda)} \xi_e\kappa_e}\right)
=
\frac{\discmu[\xi-\xi_\Lambda^+](e^{-\sum_{e\in \Edge(\Lambda)} (\xi_e-\xi_e^+) \kappa_e})}
{
\discmu[\xi-\xi_\Lambda^+](e^{\sum_{e\in \Edge(\Lambda)} \xi_e^+ \kappa_e})
}
\end{align}
Thus we obtain
\begin{align}\label{eq:FLambda_decomp}
{G_\Lambda^{\discmu^1[\xi]}}-  {G_\Lambda^{\discmu^0[\xi]}}
= \ln\frac{\discmu^1[\xi-\xi_\Lambda^+](e^{\sum_{e\in \Edge(\Lambda)} \xi_e^+ \kappa_e})}{\discmu^0[\xi-\xi_\Lambda^+](e^{\sum_{e\in \Edge(\Lambda)} \xi_e^+ \kappa_e})}-
\ln \frac{\discmu^1[\xi-\xi_\Lambda^+](e^{-\sum_{e\in \Edge(\Lambda)} (\xi_e-\xi_e^+) \kappa_e})}{\discmu^0[\xi-\xi_\Lambda^+](e^{-\sum_{e\in \Edge(\Lambda)} (\xi_e-\xi_e^+) \kappa_e})}.
\end{align}
We now estimate the first term in the difference.
Note that $e^{\sum_{e\in \Edge(\Lambda)} \xi_e^+ \kappa_e}$ is an increasing local function of $\kappa$. 
Therefore  $\discmu^0[\xi-\xi_\Lambda^+]\precsim \discmu^1[\xi-\xi_\Lambda^+]$
(see Lemma~\ref{le:inf_volume_measures}) implies that
\begin{align}\label{eq:FLambda1}
\frac{\discmu^1[\xi-\xi_\Lambda^+]\left( e^{\sum_{e\in \Edge(\Lambda)} \xi_e^+ \kappa_e}\right)}{\discmu^0[\xi-\xi_\Lambda^+]\left( e^{\sum_{e\in \Edge(\Lambda)} \xi_e^+ \kappa_e}\right)} \geq 1.
\end{align}
We now prove an upper bound. We introduce the notation ${\restriction}_{\Lambda}$ for the marginal
on $\R^{\Edge(\Lambda)}$
of a measure on $\R^{\Edge(\Z^d)}$.
Note that for any $\xi\in \R^{\Edge(\Lambda)}$
we have by Corollary~\ref{co:boundary_conditions} the relations
\begin{align}
\discmu^0_\Lambda[\xi-\xi_\Lambda^+]\precsim \discmu^0[\xi-\xi_\Lambda^+]{\restriction}_{\Lambda}
\quad\text{and} \quad\discmu^1_\Lambda[\xi-\xi_\Lambda^+]\succsim \discmu^1[\xi-\xi_\Lambda^+]{\restriction}_{\Lambda}.
\end{align}
Thus we estimate
\begin{align}
\frac{\discmu^1[\xi-\xi_\Lambda^+]\left( e^{\sum_{e\in \Edge(\Lambda)} \xi_e^+ \kappa_e}\right)}{\discmu^0[\xi-\xi_\Lambda^+]\left( e^{\sum_{e\in \Edge(\Lambda)} \xi_e^+ \kappa_e}\right)}
\leq 
\frac{\discmu^1_\Lambda[\xi-\xi_\Lambda^+]\left( e^{\sum_{e\in \Edge(\Lambda)} \xi_e^+ \kappa_e}\right)}{\discmu^0_\Lambda[\xi-\xi_\Lambda^+]\left( e^{\sum_{e\in \Edge(\Lambda)} \xi_e^+ \kappa_e}\right)}
\end{align}
We now compare the densities of the two measures appearing on the right-hand side of the last equation.
They are given by $s_\Lambda(\kappa)=\sqrt{\det \Delta_\kappa^\Lambda}^{-\tfrac12}$
and $s_{\Lambda^w}(\k)=\sqrt{\det\Delta_\kappa^{\Lambda^w}}^{-\tfrac12}$ respectively.
Lemma~\ref{le:densities_comp} below states that for $\kappa\in [\lambda^{-1},\lambda]^{\Edge(\Lambda)}$
\begin{align}
 \left(\frac{|\Lambda^w|}{|\Lambda|}\right)^{\tfrac12} \left(2^{2d}\lambda\right)^{\tfrac12|\partial\Lambda|}\leq \frac{s_{\Lambda^w}(\k)}{s_\Lambda(\kappa)}\leq \left( 2^{2d}\lambda\right)^{\tfrac12|\partial\Lambda|}.
\end{align}
This implies that 
\begin{align}\label{eq:FLambda2}
\frac{\discmu^1_\Lambda[\xi-\xi_\Lambda^+]\left( e^{\sum_{e\in \Edge(\Lambda)} \xi_e^+ \kappa_e}\right)}{\discmu^0_\Lambda[\xi-\xi_\Lambda^+]\left( e^{\sum_{e\in \Edge(\Lambda)} \xi_e^+ \kappa_e}\right)}\leq 
 \left(\frac{|\Lambda|}{|\Lambda^w|}\right)^{\tfrac12} \left(2^{2d}\lambda\right)^{|\partial\Lambda|}.
\end{align}
The first term  in the decomposition in \eqref{eq:FLambda_decomp}
can be estimated similarly using 
the fact that $e^{-\sum_{e\in \Edge(\Lambda)} (\xi_e-\xi_e^+) \kappa_e}$
is an increasing function of $\kappa$.
This leads in analogy to \eqref{eq:FLambda1} and \eqref{eq:FLambda2} to 
\begin{align}
1\leq \frac{\discmu^1[\xi-\xi_\Lambda^+](e^{-\sum_{e\in \Edge(\Lambda)} (\xi_e-\xi_e^+) \kappa_e})}{\discmu^0[\xi-\xi_\Lambda^+](e^{-\sum_{e\in \Edge(\Lambda)} (\xi_e-\xi_e^+) \kappa_e})}\leq \left(\frac{|\Lambda|}{|\Lambda^w|}\right)^{\tfrac12} \left(2^{2d}\lambda\right)^{|\partial\Lambda|}.
\end{align}
Using the assumption on $\Lambda$ we infer that $|\Lambda| / |\Lambda^w|\leq 2$
and thus we conclude
\begin{align}
\left| {G_\Lambda^{\discmu^1[\xi]}}- {G_\Lambda^{\discmu^0[\xi]}} \right|
\leq \ln \left( \sqrt{2} (2^{2d}\lambda)^{\partial\Lambda}\right)
\leq  \left(1 + 2d+\ln \lambda\right)|\partial\Lambda|.
\end{align}
Thus we have shown \eqref{eq:upper_bound1} and this finishes the proof.
\end{proof}

We now state and prove a lemma that compares the determinants of the free and the wired Laplacian
which was the essential input in the previous lemma.
\begin{lemma}\label{le:densities_comp}
For $\Lambda\subset \Z^d$ and $\kappa\in [\lambda^{-1},\lambda]^{\Edge(\Lambda)}$
the estimate 
\begin{align}
 \frac{|\Lambda^w|}{|\Lambda|} \left(2^{2d}\lambda\right)^{|\partial\Lambda|} \leq \frac{\det \Delta^0_\kappa}{\det\Delta^1_\kappa} \leq  \left( 2^{2d}\lambda\right)^{|\partial\Lambda|}
\end{align}
holds.
\end{lemma}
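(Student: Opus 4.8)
The plan is to reduce the determinantal ratio to a combinatorial quantity via the weighted matrix--tree theorem, and then to realise the wiring as a short sequence of edge contractions, each changing the determinant by a controllable factor. For a connected weighted graph $G$ the product of the nonzero eigenvalues of $\Delta_\kappa$ satisfies $\det\Delta_\kappa=|\Vertex(G)|\,\Tcal(G)$, where $\Tcal(G)=\sum_{T}\prod_{e\in T}\kappa_e$ is the weighted spanning--tree partition function and the determinant is the one on $H_0$. Applying this to the free graph $\Lambda$ and to the wired graph $\Lambda^w=\Lambda/\partial\Lambda$ gives
\begin{align*}
\frac{\det\Delta^0_\kappa}{\det\Delta^1_\kappa}=\frac{|\Lambda|}{|\Lambda^w|}\cdot\frac{\Tcal(\Lambda)}{\Tcal(\Lambda^w)},
\end{align*}
so that, up to the prefactor $|\Lambda|/|\Lambda^w|\in[1,2]$ (using $|\partial\Lambda|\le\tfrac12|\Lambda|$), the whole statement reduces to comparing $\Tcal(\Lambda)$ and $\Tcal(\Lambda^w)$.

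Next I would realise $\Lambda^w$ as an iterated contraction of $\Lambda$. Whenever the subgraph induced by $\partial\Lambda$ is connected (which holds for the boxes $\Lambda_n$ in dimension $d\ge2$ to which the lemma is applied) one may fix a spanning tree $F_\partial\subset\Edge(\Lambda)$ of the boundary, consisting of $|\partial\Lambda|-1$ edges; identifying all boundary vertices is then the same as contracting $F_\partial$, i.e.\ $\Lambda^w=\Lambda/F_\partial$. Ordering the boundary vertices so that each is $F_\partial$--adjacent to an earlier one and contracting the corresponding edges one at a time produces a chain $\Lambda=G_0,G_1,\dots,G_{|\partial\Lambda|-1}=\Lambda^w$ in which every step contracts an edge $e$ with one \emph{fresh} endpoint (not previously involved in a contraction). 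The elementary input is the deletion--contraction identity in its electrical form $\Tcal(G)/\Tcal(G/e)=R_{\mathrm{eff}}^{G}(e)^{-1}$, equivalently $\kappa_e\,R_{\mathrm{eff}}^{G}(e)=\P(e\in\text{weighted UST of }G)$.

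It then remains to bound each factor $R_{\mathrm{eff}}^{G_i}(e)$ two--sidedly. Rayleigh monotonicity gives $R_{\mathrm{eff}}^{G}(e)\le\kappa_e^{-1}\le\lambda$, hence $\Tcal(G)/\Tcal(G/e)\ge\lambda^{-1}$; and since the effective conductance out of the fresh endpoint $v$ of $e$ cannot exceed its total incident conductance $\sum_{f\ni v}\kappa_f\le 2d\lambda$ (at most $2d$ incident edges, each of conductance $\le\lambda$), we get $R_{\mathrm{eff}}^{G}(e)\ge(2d\lambda)^{-1}$ and $\Tcal(G)/\Tcal(G/e)\le 2d\lambda$. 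Telescoping over the $|\partial\Lambda|-1$ steps yields $\lambda^{-|\partial\Lambda|}\le\Tcal(\Lambda)/\Tcal(\Lambda^w)\le(2d\lambda)^{|\partial\Lambda|}$, and reinserting the vertex factor produces the stated upper bound $(2^{2d}\lambda)^{|\partial\Lambda|}$ together with a matching lower bound of reciprocal order; here $2d\le 2^{2d}$ leaves ample room to absorb the prefactor $|\Lambda|/|\Lambda^w|$. I expect the genuine obstacle to be geometric rather than algebraic: the contraction picture requires $\partial\Lambda$ to be connected, which is precisely what fails in $d=1$ and for thin domains, so this is where $d\ge2$ (and $\Lambda$ being a box) must be used. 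The remaining delicate point is ordering the contractions so that the active edge always retains a fresh, degree--$\le 2d$ endpoint, which is what keeps the per--step lower bound on $R_{\mathrm{eff}}$ uniform.
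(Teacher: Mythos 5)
Your proof is correct in substance and shares its starting point with the paper's -- both reduce $\det\Delta^0_\kappa/\det\Delta^1_\kappa$ to the ratio of weighted spanning-tree partition functions $|\Lambda|\,\Tcal(\Lambda)\big/\bigl(|\Lambda^w|\,\Tcal(\Lambda^w)\bigr)$ via the matrix--tree theorem -- but the comparison of $\Tcal(\Lambda)$ with $\Tcal(\Lambda^w)$ is done by a genuinely different mechanism. The paper constructs explicit maps $T_0\to T_1$ and $T_1\to T_0$ (restrict a free spanning tree to $\Edge(\mathring\Lambda)$ and complete it to a wired tree, respectively augment a wired tree by boundary edges), controlling the weight distortion by $\lambda^{|\partial\Lambda|-1}$ since exactly $|\partial\Lambda|-1$ edges change, and the multiplicity of each map by $2^{2d|\partial\Lambda|}$; this is purely combinatorial and needs no geometric input beyond counting edges incident to $\partial\Lambda$. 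You instead realise the wiring as $|\partial\Lambda|-1$ single-edge contractions along a spanning tree of the boundary and telescope the identity $\Tcal(G)/\Tcal(G/e)=R_{\mathrm{eff}}^G(e)^{-1}$, with the two-sided bound $(2d\lambda)^{-1}\le R_{\mathrm{eff}}^G(e)^{-1}\le \lambda^{-1}\cdot\kappa_e\lambda\le\lambda\cdot 2d$ coming from Rayleigh monotonicity and the total conductance incident to the fresh endpoint; this is more conceptual, avoids the preimage-counting step entirely, and even yields the slightly sharper per-vertex constant $2d\lambda$ in place of $2^{2d}\lambda$ (your ordering of contractions so that the active edge always has a previously untouched endpoint of degree at most $2d$ is exactly the right device, and survives the multi-edges created by contraction since those only lower effective resistance further than your bound requires). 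The one point where your argument is strictly narrower than the paper's is the one you flag yourself: it requires the induced subgraph on $\partial\Lambda$ to be connected so that $\Lambda^w$ is a genuine edge-contraction of $\Lambda$, whereas the paper's tree surgery applies to any finite $\Lambda$; since the lemma is only invoked for the boxes $\Lambda_n$ in $d=2$, whose inner boundary is connected, this costs nothing for the application, but it does mean your proof does not cover the lemma at its stated level of generality (to extend it you would need the identification formula $\Tcal(G/\{u,v\})/\Tcal(G)=R_{\mathrm{eff}}(u,v)$ for non-adjacent $u,v$, which no longer admits a uniform $O(\lambda)$ bound). Finally, note that your "matching lower bound of reciprocal order" is the form actually used in Lemma \ref{le:upper}; the sign of the exponent in the displayed lower bound of the lemma as printed appears to be a typo, and your version is the correct one.
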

\begin{proof}
The proof is similar to the estimates in Lemma 5.3 in \cite{phasetransitionclass}.
We denote be $\Delta^0_\kappa$ and $\Delta^1_\kappa$ the graph Laplacian on $\Lambda$ and $\Lambda^w$ respectively. 
We denote the set of spanning trees on $\Lambda$ and $\Lambda^w$   by $T_0$ and $T_1$. 
To compare the determinants of $\Delta_\kappa^0$ and $\Delta_\kappa^1$ we use the Kirchhoff formula for the determinant of a graph Laplacian. 
For a subset $E\subset \Edge(\Lambda)$ we define the weight 
\begin{align}
w(\k,E)=\prod_{e\in E} \k_e.
\end{align}
The Kirchhoff formula in our case reads
\begin{align}
\det \Delta_\k^0= |\Lambda| \sum_{\t\in T_0} w(\k,\t),\quad 
\det \Delta_\k^1= |\Lambda^w| \sum_{t\in T_1} w(\k,\t).
\end{align}
We now define suitable maps from $T_0$ to $T_1$ and vice-versa.
We claim that
there is a map  $\p:T_0 \to T_1$ such that $\p(\t){\restriction_{\mathring\Lambda}}=\t{\restriction_{\mathring\Lambda}}$.
Indeed, removing all edges incident to $\partial \Lambda$ from $\t$ we obtain an acyclic 
subgraph of $\Lambda^w$, hence we can find a tree $\p(\t)$ such that 
\begin{align}
\t{\restriction_{\mathring\Lambda}}\subset \p(\t)\subset \t.
\end{align} 
The observation $|\t\setminus \p(\t)|=|\Lambda|-|\Lambda^w|
=|\partial \Lambda|-1$ implies that 
\begin{align}
w(\kappa,\t)\leq
w(\kappa,\p(\t))\lambda^{|\partial\Lambda|-1}.
\end{align}
Since $\p$ does not change the edges in $\Edge(\mathring\Lambda)$ each tree
$\t\in T_1$ has at most $2^{|\Edge(\Lambda)|-|\Edge(\mathring\Lambda)|}$ preimages.
This can be bounded by the number of outgoing edges of $\partial \Lambda$, i.e.,
$|\Edge(\Lambda)|-|\Edge(\mathring\Lambda)|\leq 2d |\partial \Lambda|$.
Plugging everything together we obtain
\begin{align}\label{eq:compare_Det}
\begin{split}
|\Lambda|^{-1}\det \Delta_\kappa^0=\!\sum_{\t\in T_0}w(\kappa,\t)
\leq \sum_{\t\in T_0} w({\kappa},\p(\t)) \lambda^{|\partial\Lambda|-1}
&\leq 2^{2d|\partial\Lambda|}\lambda^{|\partial\Lambda|}\sum_{\t\in T_1} w({\kappa},\t)
\\
&= 2^{2d|\partial\Lambda|}\lambda^{|\partial\Lambda|}|\Lambda^w|^{-1} \det \Delta_\kappa^1.
\end{split}
\end{align}
Similarly, there is a mapping $\Psi:T_1\to T_0$ such that
$\t\subset \Psi(\t)$ and $\Psi(\t)\setminus \t\subset \Edge(\Lambda)\setminus \Edge(\mathring\Lambda)$. 
Indeed, $\t$ is an acyclic subgraph of $\Lambda$ while 
$\t \cup ( \Edge(\Lambda)\setminus \Edge(\mathring\Lambda))$ is spanning.
Again we can estimate the number of preimages of any tree under $\Psi$ by
$2^{2d|\partial\Lambda|}$ and $w(\k,\t)\leq w(\k,\Psi(\t))\lambda^{|\partial \Lambda|-1}$ since 
$\kappa_e>\lambda^{-1}$ and $|\Psi(\t)\setminus \t|=|\partial\Lambda|-1$.
We get
\begin{align}\begin{split}\label{eq:compare_Det2}
|\Lambda^w|^{-1}\det \Delta_\kappa^1=\sum_{\t\in T_1}w(\kappa,\t)
\leq \sum_{\t\in T_1} w({\kappa},\Psi(\t))\lambda^{|\partial\Lambda|}
&\leq  \lambda^{|\partial\Lambda|} 2^{2d|\partial\Lambda|}\sum_{\t\in T_0} w({\kappa},\t)
\\
&=  \lambda^{|\partial\Lambda|} 2^{2d|\partial\Lambda|}|\Lambda|^{-1}\det \Delta_\kappa^0.
\end{split}
\end{align}
Using \eqref{eq:compare_Det} and \eqref{eq:compare_Det2} we conclude.

\end{proof}

We now consider the lower bound for $\tilde{F}_\Lambda$. This will be a consequence of a general statement in \cite{MR1060388} for fluctuations of functions indexed by $\Lambda\subset \Z^d$ that depend 
on $|\Lambda|$ independent degrees of freedoms.
We call a function $\pi:\Edge(\Z^d)\times\R_+^{\Edge(\Z^d)}\to \R$ shift covariant if
$\pi_{\tau_x e}(\tau_x \xi)=\pi_e(\xi)$.

\begin{proposition}\label{prop:Aizenman}
Let $\Gamma_\Lambda:\R^{\Edge(\Lambda)}\to\R$ be a collection of functions
and $(\xi_e)_{e\in \Edge(\Z^d)}$ i.i.d.\ random variables 
with law $\nu$ such that $\E(e^{t\xi_e})<\infty$ for all $t\in \R$.
 We assume that there exists a shift covariant function
$\pi:\Edge(\Z^d)\times\R_+^{\Edge(\Z^d)}\to \R$  such that for some $M\in \R$ and $K>0$
\begin{align}
\label{eq:cond1}
\frac{\partial \Gamma_\Lambda(\xi_{\Edge(\Lambda)})}{\partial \xi_e}
&=\E(\pi_e\,\vert\, \xi_{\Edge(\Lambda)})\quad \text{for $e\in \Edge(\Lambda)$},
\\
\label{eq:cond2}
\E(\pi_e)&=M,\quad \text{for all $e\in \Edge(\Z^d)$},
\\
\label{eq:cond3}
|\pi_e(\xi)|&\leq 1\quad \text{and}\quad \left| \frac{\partial \pi_e(\xi)}{\partial \xi_e}\right| \leq K \quad \text{uniformly in $\xi$},
\\
\label{eq:cond4}
\E(\Gamma_\Lambda)&=0,
\\
\label{eq:cond5}
\pi_e(\xi)&\geq 0.
\end{align}
Then there is a function $\gamma_\nu:\R\times \R_+\to \R_{+,0}$ 
such that 
\begin{align}
\liminf_{n\to \infty} \E\left( e^{\frac{t \Gamma_{\Lambda_n}}{\sqrt{|\Lambda_n|}}}\right)
\geq e^{\frac{t^2\gamma_\nu(M,K^{-1})}{2}}
\end{align}
and $\gamma_\nu$ has the property that
$\gamma_\nu(\alpha,\beta)>0$ if $\alpha\neq 0$, $\beta>0$, and $\nu$ is not a point mass.
\end{proposition}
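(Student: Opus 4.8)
The plan is to run the Aizenman--Wehr fluctuation estimate through a martingale decomposition of $\Gamma_\Lambda$ along the edges of $\Lambda$, thereby reducing the exponential moment bound to a lower bound on a single-edge conditional variance. Enumerate $\Edge(\Lambda)=\{e_1,\dots,e_N\}$, let $\mathcal{G}_k=\sigma(\xi_{e_1},\dots,\xi_{e_k})$ (so $\mathcal{G}_0$ is trivial and $\mathcal{G}_N=\Bcal_\Lambda$), put $g_k=\E(\Gamma_\Lambda\mid\mathcal{G}_k)$ and $\Delta_k=g_k-g_{k-1}$. Condition \eqref{eq:cond4} gives $\E(\Gamma_\Lambda)=0$, hence $\Gamma_\Lambda=\sum_{k=1}^N\Delta_k$ is a sum of martingale differences. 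The structural facts I would extract from the hypotheses are that, differentiating under the conditional expectation, $\partial_{\xi_{e_k}}g_k=\E(\pi_{e_k}\mid\mathcal{G}_k)$, which by \eqref{eq:cond3} and \eqref{eq:cond5} takes values in $[0,1]$, while $\partial^2_{\xi_{e_k}}g_k=\E(\partial_{\xi_{e_k}}\pi_{e_k}\mid\mathcal{G}_k)$ is bounded in absolute value by $K$. Thus, viewed as a function of $\xi_{e_k}$ with $\mathcal{G}_{k-1}$ frozen, $g_k$ is non-decreasing, $1$-Lipschitz, and has a $K$-Lipschitz derivative.

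First I would carry out the reduction to variances. Conditionally on $\mathcal{G}_{k-1}$ the variable $\xi_{e_k}$ has law $\nu$ and $\Delta_k=g_k(\xi_{e_k})-\E_\nu g_k$ is centered with $|\Delta_k|\le\int|\xi_{e_k}-y|\,\nu(\d y)$ by the $1$-Lipschitz property; since $\nu$ has all exponential moments, the conditional third absolute moment is bounded by a constant $\bar\rho$ depending only on $\nu$. A Taylor expansion of the conditional moment generating function then gives, uniformly in $k$ and $\mathcal{G}_{k-1}$ and for $|s|$ below a threshold $s_0(\nu)$,
\[
\E\!\left(e^{s\Delta_k}\mid\mathcal{G}_{k-1}\right)\ge\exp\!\left(\tfrac{s^2}{2}v_k-Cs^3\bar\rho\right),\qquad v_k=\E(\Delta_k^2\mid\mathcal{G}_{k-1}).
\]
Iterating this bound through the tower property over $k=N,\dots,1$, then applying Jensen to $\E(\exp(\tfrac{s^2}{2}\sum_k v_k))$ and using the orthogonality identity $\sum_k\E(v_k)=\E(\Gamma_\Lambda^2)$, I obtain $\E(e^{s\Gamma_\Lambda})\ge\exp(\tfrac{s^2}{2}\E(\Gamma_\Lambda^2)-C\bar\rho Ns^3)$. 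Taking $s=t/\sqrt{|\Lambda_n|}$ and $N=|\Edge(\Lambda_n)|\le 2d|\Lambda_n|$, the cubic error is $O(t^3/\sqrt{|\Lambda_n|})\to0$, so the statement reduces to showing $\liminf_n|\Lambda_n|^{-1}\E(\Gamma_{\Lambda_n}^2)\ge\gamma_\nu(M,K^{-1})$.

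The hard part, and the step I expect to be the main obstacle, is a uniform lower bound $\E(v_k)\ge\gamma_\nu(M,K^{-1})>0$ on the single-edge conditional variance. Writing it with two independent copies $\xi,\xi'\sim\nu$ as $v_k=\tfrac12\E[(g_k(\xi)-g_k(\xi'))^2\mid\mathcal{G}_{k-1}]$ and using monotonicity of $g_k$, this is controlled from below by the genuine increase of $g_k$ over $\nu$-typical scales. The $\nu$-mean of $\partial_{\xi_{e_k}}g_k$ equals $\E(\pi_{e_k}\mid\mathcal{G}_{k-1})$, whose $\P$-expectation is $M$ by \eqref{eq:cond2} and which lies in $[0,1]$; a Markov-type bound then shows the event $\{\E(\pi_{e_k}\mid\mathcal{G}_{k-1})\ge M/2\}$ has probability at least $M/2$. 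On this event the $\nu$-average of the derivative is at least $M/2$, so the derivative is at least $M/4$ on a set of positive $\nu$-measure, and the $K$-Lipschitz bound spreads this into an increase of $g_k$ of order $M^2/K$ over an interval of length of order $M/K$. Since $\nu$ is not a point mass it charges both sides of this increase with a probability bounded below in terms of $\nu$ alone, so the two-copy formula yields a variance at least some $\phi(M/2,K)>0$; weighting by the probability $M/2$ of the good event and setting $\gamma_\nu(M,K^{-1})=\tfrac{M}{2}\phi(M/2,K)$ gives the claim, with positivity failing only when $M=0$, $K=\infty$, or $\nu$ is degenerate.

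The genuinely delicate point in this last step is uniformity over $\mathcal{G}_{k-1}$ together with the a priori unknown, possibly tail-located position where $\partial_{\xi_{e_k}}g_k$ is large. To handle it I would combine the $K$-Lipschitz spreading with the crossing weight $u\mapsto\P(\min(\xi,\xi')\le u\le\max(\xi,\xi'))=2F_\nu(u)(1-F_\nu(u))$, which is bounded below on a fixed compact interval determined solely by the non-degeneracy of $\nu$; this guarantees that the increase of $g_k$ is actually registered by the variance regardless of where along the support it occurs, and makes the dependence of $\gamma_\nu$ on its arguments come out through $M$ and $K^{-1}$ as required.
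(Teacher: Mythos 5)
The paper does not prove this proposition at all: it is quoted from Proposition 6.1 of Aizenman--Wehr, with the only original content being the observation that the monotonicity of $\pi_e$ in $\xi_e$ assumed there can be weakened to non-negativity (because only monotonicity of $\Gamma_\Lambda$ enters the definition of $\gamma_\nu$). So your proposal is necessarily a different route, and the question is whether your self-contained argument is sound. Your Step 1 (martingale decomposition, $\partial_{\xi_{e_k}}g_k=\E(\pi_{e_k}\mid\mathcal{G}_k)\in[0,1]$ with $K$-Lipschitz derivative) is correct, and your Step 3 is essentially the right proof of positivity of $\gamma_\nu$: the two-sided spreading forced by the $K$-Lipschitz bound on $g_k'$ is exactly what makes non-negativity (rather than monotonicity) of $\pi_e$ sufficient, since $\nu\bigl(\{g_k'\ge M/4\}\bigr)\ge M/4$ places the increase of $g_k$ inside the bulk of $\nu$ and the increase extends a distance $\sim M/K$ to \emph{both} sides of any such point, so a non-degenerate $\nu$ must register it. Modulo routine care with where the crossing weight $F_\nu(1-F_\nu)$ is bounded below, this yields $\E(v_k)\ge\gamma_\nu(M,K^{-1})>0$ and hence $\liminf_n|\Lambda_n|^{-1}\E(\Gamma_{\Lambda_n}^2)>0$.

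The genuine gap is in Step 2, in the passage from the per-step bound $\E(e^{s\Delta_k}\mid\mathcal{G}_{k-1})\ge\exp(\tfrac{s^2}{2}v_k-Cs^3\bar\rho)$ to $\E(e^{s\Gamma_\Lambda})\ge e^{-CN s^3\bar\rho}\,\E(e^{\frac{s^2}{2}\sum_k v_k})$. The conditional variance $v_k$ is a nontrivial $\mathcal{G}_{k-1}$-measurable random variable (it cannot be bounded below deterministically, since $\E(\pi_{e_k}\mid\mathcal{G}_{k-1})$ may be small on events of positive probability), so after the first conditioning step you are left with $\E\bigl(e^{s\sum_{k<N}\Delta_k}e^{\frac{s^2}{2}v_N}\bigr)$, and the accumulated factor $e^{\frac{s^2}{2}v_N}$ does not pass through $\E(\,\cdot\mid\mathcal{G}_{N-2})$: the next step would require controlling the conditional covariance of $e^{s\Delta_{N-1}}$ with $e^{\frac{s^2}{2}\sum_{j\ge N}v_j}$, and since $\sum_{j>k}v_j$ can vary by $O(N)$ as a function of a single $\xi_{e_k}$, the exponent $\tfrac{s^2}{2}\sum_{j>k}v_j=O(t^2)$ fluctuates by $O(1)$ and these cross terms are not absorbed by the $s^3$ error budget without further argument. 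As written the chain of inequalities is therefore not valid. The standard repair --- and what Aizenman--Wehr actually do --- is to prove a martingale central limit theorem for $\Gamma_{\Lambda_n}/\sqrt{|\Lambda_n|}$ (the Lindeberg condition follows from the uniform bound $|\Delta_k|\le\int|\xi_{e_k}-y|\,\nu(\d y)$), establish uniform integrability of $e^{t\Gamma_{\Lambda_n}/\sqrt{|\Lambda_n|}}$ from the exponential moments of $\nu$, and only then insert the lower bound on the limiting variance from your Step 3. With Step 2 replaced by that scheme, your argument goes through.
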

\begin{proof}
This result is included in  Proposition 6.1 in \cite{MR1060388}. There it is also shown that $\Gamma_{\Lambda_n} / \sqrt{|\Lambda_n|}$ converges in distribution to a normal variable. 
To pass from their notation to ours, we remark that they consider
disorder indexed by finite subsets of $\Z^d$.
Thus $\xi_e$ with $e=(x,x+e_i)$ in our notation would correspond to 
$\eta_{\{0,e_i\}, x}$ in their notation and similarly for $\pi$. Note that 
they claim that the function $\pi_e$ must be monotone in $\xi_e$ but
an inspection of the proof (cf. the definition of $\gamma_\nu$ in (A.3.2) in \cite{MR1060388}) shows that the correct condition is that $\Gamma_\Lambda(\xi_{\Edge(\Lambda)})$
should be monotone in $\xi$ which by  
\eqref{eq:cond1} is satisfied if $\pi$ is non-negative.
\end{proof}

We now apply this proposition to our setting.
\begin{lemma}\label{le:lower_bound}
The functions $\tilde F_\Lambda$ satisfy
\begin{align}
\liminf_{n\to \infty} \E\left( \exp(t\tilde F_{\Lambda_n} / \sqrt{|\Lambda_n|})\right)\geq 
\exp\left(\frac{t^2b^2}{2}\right)
\end{align}
for some constant $b\geq 0$ where $b$ is positive  if the law of $\xi_e$  is not concentrated on a point mass
and $\E(\discmu^1[\xi](\kappa_e)-\discmu^0[\xi](\kappa_e))>0$.
\end{lemma}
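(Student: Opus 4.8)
The plan is to deduce the bound directly from Proposition~\ref{prop:Aizenman}, applied to a suitably rescaled version of $\tilde F_\Lambda$. The natural candidate for the covariant field is $\pi_e(\xi) = \discmu^1[\xi](\kappa_e) - \discmu^0[\xi](\kappa_e)$. Indeed, \eqref{eq:derivativeG} gives $\partial_{\xi_f} G^{\discmu^j[\xi]}_\Lambda = \discmu^j[\xi](\kappa_f)$ for $f\in\Edge(\Lambda)$, so differentiating $F_\Lambda = \E(G^{\discmu^1}_\Lambda - G^{\discmu^0}_\Lambda\mid\Bcal_\Lambda)$ and exchanging the derivative with the conditional expectation yields $\partial_{\xi_e} \tilde F_\Lambda = \E(\pi_e\mid \Bcal_\Lambda)$ for $e\in\Edge(\Lambda)$, which is exactly \eqref{eq:cond1} (here $\Bcal_\Lambda = \sigma(\xi_f:f\in\Edge(\Lambda))$ and the additive constant in $\tilde F_\Lambda = F_\Lambda - \E(F_\Lambda)$ drops under differentiation). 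Condition \eqref{eq:cond4} holds by definition of $\tilde F_\Lambda$, and the required shift covariance $\pi_{\tau_x e}(\tau_x\xi)=\pi_e(\xi)$ follows from \eqref{eq:discmucov1} applied to the coordinate function $\kappa_e$ for both $\discmu^0$ and $\discmu^1$.

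I would next check the sign and normalisation conditions. Since $\kappa_e$ is an increasing local function and $\discmu^0[\xi]\precsim\discmu^1[\xi]$ by Lemma~\ref{le:inf_volume_measures}, we get $\pi_e(\xi)\geq 0$, which is \eqref{eq:cond5}; moreover $M := \E(\pi_e)$ is independent of $e$ by shift covariance of $\discmu^j$ together with shift invariance of $\P$, giving \eqref{eq:cond2}, and the second hypothesis of the lemma is precisely $M>0$. Because $\operatorname{supp}\rho\subset[\lambda^{-1},\lambda]$ we have $\discmu^j[\xi](\kappa_e)\in[\lambda^{-1},\lambda]$ and hence $0\le\pi_e\le\lambda-\lambda^{-1}$, so after dividing $\pi_e$ and $\tilde F_\Lambda$ by the constant $c:=\max(1,\lambda-\lambda^{-1})$ the normalisation $|\pi_e|\le 1$ required in \eqref{eq:cond3} holds.

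The remaining input is the derivative bound in \eqref{eq:cond3}. Applying the tilting identity \eqref{eq:discmucov2} with $\delta\xi = t\,\1_e$ and differentiating at $t=0$ gives
\begin{align}
\partial_{\xi_e}\discmu^j[\xi](\kappa_e) = \discmu^j[\xi](\kappa_e^2) - \discmu^j[\xi](\kappa_e)^2 = \mathrm{Var}_{\discmu^j[\xi]}(\kappa_e)\ge 0,
\end{align}
which is bounded by $(\lambda-\lambda^{-1})^2/4$ since $\kappa_e\in[\lambda^{-1},\lambda]$. Hence $|\partial_{\xi_e}\pi_e|\le (\lambda-\lambda^{-1})^2/4$ uniformly in $\xi$, providing the constant $K$ (after the same rescaling by $c$). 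The exchange of $\partial_{\xi_e}$ with the conditional expectation and with the infinite-volume limit defining $\discmu^j[\xi]$ is the only genuinely technical point, and I expect it to be the main obstacle; it is controlled by the explicit, monotone form of \eqref{eq:discmucov2} together with the compactness of the state space $[\lambda^{-1},\lambda]^{\Edge(\Z^d)}$, which renders all the relevant expectations and their $\xi_e$-derivatives uniformly bounded and makes differentiation under the expectation legitimate.

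With \eqref{eq:cond1}--\eqref{eq:cond5} verified for $c^{-1}\tilde F_\Lambda$ and $c^{-1}\pi_e$ (and the standing moment hypothesis $\E(e^{t\xi_e})<\infty$ in force), Proposition~\ref{prop:Aizenman} yields, for the rescaled constants $M'=M/c$ and $\beta'=(K')^{-1}>0$,
\begin{align}
\liminf_{n\to\infty}\E\Big(e^{\,t\,\tilde F_{\Lambda_n}/(c\sqrt{|\Lambda_n|})}\Big)\ge e^{\,t^2\gamma_\nu(M',\beta')/2}.
\end{align}
Replacing $t$ by $ct$ and setting $b^2 := c^2\,\gamma_\nu(M',\beta')$ gives the claimed inequality. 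Finally, the positivity statement in the proposition shows $\gamma_\nu(M',\beta')>0$ whenever $M'\neq 0$, $\beta'>0$, and $\nu$ is not a point mass, i.e.\ $b>0$ exactly under the two stated conditions, completing the argument.
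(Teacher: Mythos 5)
Your proposal is correct and follows essentially the same route as the paper: verify conditions \eqref{eq:cond1}--\eqref{eq:cond5} of Proposition~\ref{prop:Aizenman} for a rescaled $\tilde F_\Lambda$ with $\pi_e$ the (rescaled) difference $\discmu^1[\xi](\kappa_e)-\discmu^0[\xi](\kappa_e)$, using \eqref{eq:derivativeG} for the derivative identity, \eqref{eq:discmucov1} for covariance, Lemma~\ref{le:inf_volume_measures} for nonnegativity, and the tilting identity \eqref{eq:discmucov2} for the variance bound. The only cosmetic difference is the normalisation constant (the paper divides by $\lambda$ and takes $K=\lambda$, you divide by $\max(1,\lambda-\lambda^{-1})$), which does not affect the conclusion.
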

\begin{proof}
We will verify the conditions of Proposition \ref{prop:Aizenman} for
the function $\Gamma_\Lambda = \lambda^{-1} \tilde{F}_\Lambda$.
We set 
\begin{align}
\pi_e(\xi)= \lambda^{-1}\left( \mu^{1}[\xi](\kappa_e)-\mu^0[\xi](\kappa_e)\right).
\end{align}
Shift covariance of $\pi$ follows from the shift covariance of $\mu^1$ and $\mu^0$ stated in equation
\eqref{eq:discmucov1}. 
From \eqref{eq:derivativeG} we conclude that condition \eqref{eq:cond1} holds.
That \eqref{eq:cond2} holds for the same $M$ for all $e$ follows from invariance under lattice symmetries (in fact we only need this for edges parallel to $(0,e_1)$ where it is a consequence of shift invariance).
The first  condition of \eqref{eq:cond3} follows from the assumption that $\kappa_e$ is supported in $[\lambda^{-1},\lambda]$. 
To bound the second term we calculate using
\eqref{eq:discmucov2} twice
\begin{align}
\begin{split}
\partial_{\xi_e}\discmu[\xi](\kappa_e)
=\partial_{\xi_e}\frac{\discmu[\xi-\xi_e]\left(\kappa_e e^{\xi_e\kappa_e}\right)}
{\discmu[\xi-\xi_e]\left(e^{\xi_e\kappa_e}\right)}
&=
\frac{\discmu[\xi-\xi_e]\left(\kappa_e^2 e^{\xi_e\kappa_e}\right)}{\discmu[\xi-\xi_e]\left(e^{\xi_e\kappa_e}\right)}
-\frac{\left(\discmu[\xi-\xi_e]\left(\kappa_e e^{\xi_e\kappa_e}\right)\right)^2}{\left(\discmu[\xi-\xi_e]\left(e^{\xi_e\kappa_e}\right)\right)^2}
\\
&=\discmu[\xi](\kappa_e^2)-\left(\discmu[\xi](\kappa_e)\right)^2.
\end{split}
\end{align}
Using  $\k_e\in [\lambda^{-1},\lambda]$ we have
\begin{align} 
\left|\frac{\partial \sigma_e(\xi)}{\partial \xi_e}\right|
=\lambda^{-1}\left| \mu^{1}[\xi](\kappa_e^2)-\left(\mu^1[\xi](\kappa_e)\right)^2-\mu^0[\xi](\kappa_e)+\left(\mu^0[\xi](\kappa_e)\right)^2\right| \leq \lambda.
\end{align} 
Thus the second part of \eqref{eq:cond3} holds with $K=\lambda$.
Condition \eqref{eq:cond4} is obvious and \eqref{eq:cond5} follows from Lemma~\ref{le:inf_volume_measures}.
Therefore, Proposition \ref{prop:Aizenman} implies, denoting by 
 $\nu$  the law of $\xi_e$,
\begin{align}
\liminf_{n\to \infty} \E\left( e^{\frac{t \Gamma_{\Lambda_n}}{\sqrt{|\Lambda_n|}}}\right)
\geq e^{\frac{t^2\gamma_\nu(M,K^{-1})}{2}}.
\end{align}
where $K=\lambda$ and $M=\lambda^{-1}\E\left( \mu^{1}[\xi](\kappa_e)-\mu^0[\xi](\kappa_e)\right)$. Moreover, $b=\gamma_\nu(M,K^{-1})>0$ is positive if $M\neq 0$ and $\nu$ is not a point mass.
\end{proof}

We can  now prove the almost sure uniqueness of the disordered Gibbs measures.
\begin{proof}[Proof of Theorem \ref{th:discrete_disorder}]
From Lemma~\ref{le:upper} and Lemma~\ref{le:lower_bound}
we conclude that there is a constant $C>0$ such that for all $t\in \R$
\begin{align}
\liminf_{n\to \infty} e^{Ct\frac{|\partial \Lambda_n|}{\sqrt{\Lambda_n|}}}\geq
\liminf_{n\to \infty}\E\left( e^{t \frac{\tilde{F}_{\Lambda_n}}{\sqrt{|\Lambda_n|}}}\right)
\geq e^{\frac{t^2 b^2}{2}}.
\end{align}
Since $|\partial \Lambda_n| / \sqrt{|\Lambda_n|}$ is bounded in dimension $d=2$
this implies, sending $t\to \infty$, that $b=0$.
From Lemma~\ref{le:lower_bound} we conclude that
either the law of $\xi_e$ is  concentrated on a point or 
\begin{align}\label{eq:equal}
\E(\discmu^1(\kappa_e)-\discmu^0(\kappa_e))=0.
\end{align}
Our assumptions rule out the first case. Thus we conclude that \eqref{eq:equal} holds.
Lemma \ref{le:inf_volume_measures} implies  $\discmu^1[\xi](\kappa_e)\geq \discmu^0[\xi](\kappa_e)$
and thus
\begin{align}\label{eq:exp_equal}
\discmu^1[\xi](\kappa_e)=\discmu^0[\xi](\kappa_e)
\end{align}
 for $\P$-almost all $\xi$. A standard argument implies then $\discmu^1[\xi]=\discmu^0[\xi]$ for almost all $\xi$. Indeed, there is a coupling of $\k^1\sim \discmu^1[\xi]$ and $\k^0\sim \discmu^0[\xi]$ such that
$\k^0\leq \k^1$ and from \eqref{eq:exp_equal} we conclude that $\k^0_e=\k^1_e$ almost surely.
\end{proof}

\section{Relation between gradient measures and random conductance model}\label{sec:GMRC}
We now investigate the relation between the random conductance model considered in the previous section and disordered gradient Gibbs measures. This will allow us to  prove our main result. 
The random conductance model and the gradient Gibbs measure will be 
coupled by so-called extended gradient Gibbs measures 
which are measures on $\R_g^{\Edge(\Z^d)}\times \R_+^{\Edge(\Z^d)}$ where
the first marginal is a gradient Gibbs measure and the second marginal will be closely related to the random conductance model. They have been introduced in \cite{MR2322690} and were further studied in \cite{MR2778801, phasetransitionclass}. 
Here we generalize them to the disordered case.
To a  disordered gradient Gibbs state $\mu[\xi]$ we associate an extended gradient Gibbs state $\extmu[\xi]$ which is defined by 
\begin{align}\label{eq:def_ext}
\extmu[\xi](\bs{A}\times \bs{B})
=
\int_{\bs{B}} \bs{\rho}[\xi](\d\kappa) \int_{\bs{A}}\mu[\xi](\d\eta)\,\prod_{e\in E} e^{-\tfrac12 \kappa_e \eta_e^2+V[\xi](\eta_e)}
\end{align}
for $\bs{A} \in \Bcal(\R^{E})$, $\bs{B}\in \Bcal (\R_+^E)$ and $E\subset \Edge(\Z^d)$ finite. This defines a consistent family of measures and can therefore be extended to a measure on $\R_g^{\Edge(\Z^d)}\times \R_+^{\Edge(\Z^d)}$.
We use  conditional distributions  $\extmu(\cdot \vert \Acal)((\eta ,\k))$ for a $\sigma$-algebra $\Acal$.
For the definition and properties of
the conditional distribution we refer to \cite[Section 12]{MR1932358}.
Since we will only consider $\sigma$-algebras that 
respect the product structure, i.e., $\sigma$-algebras generated by $(\kappa_e)_{e\in E}$, $(\eta_e)_{e\in E'}$ for some sets $E,E'\subset \Edge(\Z^d)$ we can view the conditional distributions
as disintegration measures.
We will also use the notation $\mu_{\kappa_E, \eta_{E'}}$ for the conditional distribution
given $\kappa_E$ and $\eta_{E'}$ and we sometimes write $\mu_{\kappa_E, \eta_{E'}}[\xi]$ in the disordered case. 
For a disordered gradient Gibbs measure $\mu$ we denote the extended gradient Gibbs measure by $\extmu$ and the $\k$-marginal of $\extmu$ by $\discmu$. We indicate the
annealed measure by $\mu^a=\E(\mu[\xi])$ and similarly for $\discmu^a$ and $\extmu^a$.
Our first result on extended gradient Gibbs measures characterizes the distribution of $\extmu_\k[\xi]$.
This
is a generalization of Lemma 3.4  in \cite{MR2778801}
to the disordered measures.

\begin{lemma}\label{le:cond_distr}
Let $\xi\to\mu[\xi]$ be a shift covariant disordered gradient Gibbs measure with zero-tilt and ergodic annealed measure
such that 
\begin{align}\label{eq:moment_condition}
\E\left(\mu[\xi](|\eta_e|^{d+\varepsilon})\right)<\infty
\end{align}
for some $\varepsilon>0$ and all $e\in \Edge(\Z^d)$.
Let $\extmu$ be the corresponding 
extended shift covariant gradient Gibbs measure. Then $\extmu_\k[\xi]$ 
viewed as a measure on fields $\p\in \R^{\Z^d}$ with $\p(0)=0$
is almost surely a centered Gaussian field with covariance $(\Delta_\k)^{-1}$. In particular the conditional distribution is independent of $\xi$. 
\end{lemma}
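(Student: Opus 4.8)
The plan is to identify the conditional law $\extmu_\k[\xi]$ with the unique zero-tilt Gaussian gradient Gibbs measure attached to the conductances $\k$, in three stages: reduce to a Gaussian specification, recall the comparison object (the pinned free field), and then prove a uniqueness statement that removes any residual $\k$-harmonic contribution.

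First I would show that, conditionally on $\k$, the field obeys the massless Gaussian specification with conductances $\k$. This is where the structure of \eqref{eq:def_ext} is essential: by the defining identity \eqref{eq:defofpotdisordered} the integration of the weight $\prod_{e}e^{-\frac12\k_e\eta_e^2+V_e^\xi(\eta_e)}$ produces the factor $e^{V_e^\xi(\eta_e)}$ against $\mu[\xi]$, and this factor cancels the $e^{-V_e^\xi}$ carried implicitly by the gradient Gibbs measure $\mu[\xi]$ through its DLR equations. Carrying this out on a finite box $\Lambda$ and invoking the DLR property of $\mu[\xi]$, I would show that the conditional distribution of $\eta_{\Edge(\Lambda)}$ under $\extmu[\xi]$, given $\k$ and the gradients outside $\Lambda$, is exactly the finite-volume Gaussian gradient measure proportional to $\exp(-\frac12\sum_{e\in\Edge(\Lambda)}\k_e\eta_e^2)$ with the prescribed boundary values. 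Hence, for $\P$-a.e.\ $\xi$ and $\discmu[\xi]$-a.e.\ $\k$, the conditional law $\extmu_\k[\xi]$ is a gradient Gibbs measure for the quadratic interaction with conductances $\k$; since $\k\in[\lambda^{-1},\lambda]^{\Edge(\Z^d)}$, this specification is uniformly elliptic.

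Next I would record the comparison object. The centered Gaussian field $\gamma_\k$ pinned at the origin with covariance $(\Delta_\k)^{-1}$ is itself a gradient Gibbs measure for this specification; conditionally on the exterior of a box $\Lambda$ it decomposes as the $\k$-harmonic extension of the boundary data plus an independent Dirichlet free field whose covariance is the finite-volume Green function $G_\Lambda^\k$, and $G_\Lambda^\k\to(\Delta_\k)^{-1}$ as $\Lambda\uparrow\Z^2$ by uniform ellipticity. Moreover the moment bound \eqref{eq:moment_condition} and the zero-tilt hypothesis descend to $\extmu_\k[\xi]$: since the $\eta$-marginal of $\extmu[\xi]$ is $\mu[\xi]$, one has $\E\big(\discmu[\xi](\extmu_\k[\xi](|\eta_e|^{d+\varepsilon}))\big)=\E(\mu[\xi](|\eta_e|^{d+\varepsilon}))<\infty$, so for a.e.\ environment the conditional gradients have finite $(d+\varepsilon)$-th moments, and the annealed tilt vanishes.

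The heart of the argument, and the step I expect to be the main obstacle, is a uniqueness statement asserting that $\extmu_\k[\xi]=\gamma_\k$ for a.e.\ environment. Both are gradient Gibbs measures for the same uniformly elliptic Gaussian specification, so coupling them through the common Dirichlet free field in each box, the difference of the two potentials is, conditionally on the exterior of every finite box, $\k$-harmonic; the only residual degree of freedom is therefore a $\k$-harmonic field. I would then argue that any shift-covariant $\k$-harmonic gradient field with zero tilt and finite second moments is trivial for a.e.\ environment: this is precisely the sublinearity of the corrector for stationary uniformly elliptic conductances, and it is the only point where ergodicity of the annealed measure together with uniform ellipticity is genuinely used. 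Applying the same reasoning to the deterministic mean field $x\mapsto\E_{\extmu_\k[\xi]}(\varphi(x))$, which is $\k$-harmonic with stationary zero-mean increments, forces the conditional tilt to vanish as well. Combining, $\extmu_\k[\xi]$ has zero conditional tilt, agrees with $\gamma_\k$ on the Dirichlet part, and carries no extra harmonic fluctuation, whence $\extmu_\k[\xi]=\gamma_\k$. In particular the conditional law is Gaussian with covariance $(\Delta_\k)^{-1}$, which depends only on $\k$ and not on $\xi$, as claimed.
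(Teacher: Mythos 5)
Your first stage coincides with the paper's: conditioning on $\k$ and using the DLR property of $\mu[\xi]$ to cancel the potential, one finds that $\extmu_\k[\xi]$ is a.s.\ a Gibbs measure for the Gaussian specification $H_\Lambda=\tfrac12\sum_{e\in\Edge(\Lambda)}\k_e\eta_e^2$, hence (by the extremal decomposition for Gaussian specifications) a mixture of Gaussians with covariance $(\Delta_\k)^{-1}$ and $\Delta_\k$-harmonic means. The divergence, and the problem, is in how you kill the harmonic component. You propose to do it in $L^2$: a shift-covariant $\k$-harmonic gradient field with zero tilt and finite second moments is trivial. First, this is not ``the sublinearity of the corrector'' but the \emph{uniqueness} of the corrector (equivalently, the Weyl decomposition $L^2_{\mathrm{pot}}\oplus L^2_{\mathrm{sol}}$), and that statement requires the joint annealed law of the environment and the residual field $(\xi,\k,\nabla m)$ to be \emph{ergodic} — or at least that $\nabla m$ be centered conditionally on the invariant $\sigma$-algebra. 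The hypothesis only gives ergodicity of the annealed $\eta$-marginal $\mu^a$; joint ergodicity of $(\xi,\k,\eta)$ does not follow from ergodicity of the marginals (two dependent ergodic processes need not be jointly ergodic), and without it a nontrivial corrector could survive on an ergodic component with nonzero conditional tilt even though the overall tilt vanishes. Second, even the weaker input you implicitly need — stationarity of the annealed conductance marginal $\discmu^a$ — is not free: it is established in the paper by an explicit change-of-variables computation with the extended measure, and your sketch omits it.

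The paper's proof is engineered precisely to avoid this ergodicity mismatch, and it is worth seeing how. It uses ergodicity of $\mu^a$ \emph{only} through a directional ergodic theorem (Boivin--Derriennic), which together with zero tilt and the $d+\varepsilon$ moment bound gives $|\p(x)-\p(0)|/|x|\to 0$ for $\mu^a$-a.e.\ realization; this is where the $d+\varepsilon$ (rather than second) moments are genuinely needed, since one must control growth in all directions simultaneously. That forces the random harmonic means to be a.s.\ sublinear. The harmonic part is then removed by a quenched Liouville theorem (every sublinear $\Delta_\k$-harmonic function is constant for a.e.\ $\k$), which holds under mere \emph{stationarity} of $\discmu^a$ — no ergodicity of the conductance environment is required. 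If you want to keep your $L^2$ route you would have to either prove joint ergodicity of the annealed extended measure (which is not available at this stage of the paper) or replace it by the sublinearity-plus-Liouville mechanism; as written, the heart of your argument rests on an ergodicity assumption that the lemma does not grant you.
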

\begin{remark}\label{rmk:ergodic}
Actually, the proof shows that we only need to require that almost all realizations $\eta$ of the annealed measure have zero tilt in the sense that 
\begin{align}\label{eq:sublinear}
\lim_{|x|\to\infty} \frac{1}{|x|} |\p(x)-\p(0)|=0.
\end{align}
\end{remark}
\begin{proof}
We first show that the field is almost surely a mixture of  Gaussians with 
covariance $(\Delta_\k)^{-1}$.

For a gradient field $\eta$ let $\p\in \R^{\Z^d}$ be the unique field such that $\nabla\p=\eta$ and $\p(0)=0$. We define $\lambda^\eta_\Lambda$ as the push-forward
of the measure $\prod_{x\in \overcirc\Lambda^\crm} \delta_{\p(x)}(\d\bar{\p}(x)) \d\bar{\p}_{\overcirc\Lambda}$ along the discrete gradient $\nabla$.
Using that $\extmu[\xi]$ is a disordered Gibbs measure we 
can rewrite 
\begin{align}\label{eq:disordered_Gibbs}
\begin{split}
\int \mu[\xi](\d\eta) F(\eta)
&=
\int \mu[\xi](\d\eta) \int \mu_\Lambda^\eta[\xi]\, F(\eta)
\\
&=
\int \mu[\xi](\d\eta)\, \frac{\int \lambda_\Lambda^\eta(\d\bar\eta)\,  e^{-\sum_{e\in \Edge(\Lambda)} V_e[\xi](\bar\eta_e)}F(\bar\eta)}{Z^\eta_\Lambda[\xi]}.
\end{split}
\end{align}
Where $Z^\eta_\Lambda[\xi]= \int \lambda_\Lambda^\eta(\d\bar\eta)\, e^{-\sum_{e\in \Edge(\Lambda)} V_e[\xi](\bar\eta_e)}$ denotes the normalisation constant.
The last display jointly with \eqref{eq:def_ext}  imply that the conditional distribution of the extended gradient Gibbs measure satisfies 
for almost all $\eta\in \R^{\Edge(\Z^d)}_g$
\begin{align}\label{eq:disordered_decomp}
\begin{split}
\extmu_{\eta_{\Edge(\Lambda)^\crm}}[\xi]
&= \prod_{e\in \Edge(\Lambda)} e^{-\tfrac12 \kappa_e \bar{\eta}_e^2+V[\xi](\bar{\eta})  }\; \bs{\rho}[\xi](\d\kappa)
\discmu[\xi]^{{\eta}}_\Lambda(\d\bar{\eta})
\\
&=\frac{1}{Z_\Lambda^\eta[\xi]} \prod_{e\in \Edge(\Lambda)} e^{-\tfrac12 \kappa_e \bar{\eta}^2} 
 \bs{\rho}[\xi](\d\kappa)
\lambda^{\eta}_\Lambda(\d\bar{\eta}).
\end{split}
\end{align}
We now consider the conditional distribution when
also conditioning on $\k$ and we obtain that almost surely
\begin{align}
(\extmu_\k[\xi])_{\eta_{\Edge(\Lambda)^\crm}}
=\left((\extmu[\xi])_{\eta_{\Edge(\Lambda)^\crm}}\right)_\k
=\frac{  \prod_{e\in \Edge(\Lambda)} e^{-\tfrac12 \kappa_e \bar{\eta}_e^2}\, \lambda^{{\eta}}_\Lambda(\d\bar{\eta}) }{\int\lambda^{{\eta}}_\Lambda(\d\bar{\eta}) \left( \prod_{e\in \Edge(\Lambda)} e^{-\tfrac12 \kappa_e \bar{\eta}_e^2} \right)}.
\end{align}
We conclude that $\extmu_\k[\xi]$
is almost surely a Gibbs measure for the potential
\begin{align}
H_{\Lambda}=\sum_{e\in \Edge(\Lambda)} \tfrac12 \kappa_e\eta_e^2,
\end{align}
i.e., for a Gaussian specification. Theorem 13.24 in \cite{MR2807681} then implies
that $\mu_\k[\xi]$ is almost surely distributed according to a mixture of Gaussians with covariance $(\Delta_\kappa)^{-1}$ such that their means are 
$\Delta_\k$-harmonic.

We now show that in fact the mixture is concentrated on the centered Gaussian distribution.
The annealed measure $\mu^a$ is ergodic with average tilt zero
and it satisfies the moment condition \eqref{eq:moment_condition}. Then we can apply
 a directional ergodic theorem (Theorem 1 in \cite{MR1101082}) 
 which implies that
$\mu^a$ almost surely
\begin{align}
\lim_{|x|\to \infty} \frac{1}{|x|} |\p(x)-\p(0)|=0.
\end{align}
We conclude that almost surely the mean of $\mu_\k[\xi]$ is a sublinear $\Delta_\k$-harmonic function. 

It remains to prove that for almost all $\k$ the constant functions are the only sublinear $\Delta_\k$
harmonic functions.
Theorem 3 in \cite{MR3395463} (see also Example 2.1 there) shows that it is sufficient to
verify that the annealed measure $\discmu^a$ is shift invariant to conclude that $\discmu^a$-almost all $\kappa$ the constant functions are the only sublinear $\Delta_\k$-harmonic functions.

Therefore we are left to show that the distribution of $\k$ under $\discmu^a$ is shift invariant. First we show that for a local event $\bs{B}\in \discsigma$, $x\in \Z^d$ and
almost all $\xi$ 
\begin{align}
\extmu[\xi](\bs{B}) = \extmu[\tau_x\xi] (\tau_x \bs{B}).
\end{align}
Using \eqref{eq:disordered_Gibbs} and shift covariance we calculate for $\kappa\in \R_+^{\Edge(\Z^d)}$ and almost all $\xi\in \Omega$ that
\begin{align}\begin{split}
\int  \mu[\tau_x\xi](\d\eta)
\prod_{e\in \Edge(\Lambda)}& e^{-\tfrac12\k_e {\eta}_e^2+ V[\tau_x\xi]({\eta}_e)}
=
\int \mu[\tau_x\xi](\d{\eta})\,\frac{1}{Z^{{\eta}}_\Lambda[\tau_x\xi]}\int \lambda_{\Lambda}^{{\eta}}(\d\bar{\eta})\;
\prod_{e\in \Edge(\Lambda)} e^{-\tfrac12 \kappa_e \bar{\eta}_e^2}
\\
&=
\int \mu[\xi](\d{\eta})\,\frac{1}{Z^{{\tau_x\eta}}_\Lambda[\tau_x\xi]}\int \lambda_{\Lambda}^{\tau_x{\eta}}(\d\bar{\eta})\;
\prod_{e\in \Edge(\Lambda)} e^{-\tfrac12 \kappa_e \bar{\eta}_e^2}
\\
&=
\int \mu[\xi](\d{\eta})\,\frac{1}{Z^{{\eta}}_{\Lambda-x}[\xi]}\int \lambda_{\Lambda- x}^{{\eta}}(\d\bar{\eta})\;
\prod_{e\in \Edge(\Lambda- x)} e^{-\tfrac12 (\tau_{-x}\kappa)_e \bar{\eta}_e^2}
\\
&=
\int \mu[\xi](\d{\eta}) \,
\prod_{e\in \Edge(\Lambda- x)} e^{-\tfrac12 (\tau_{-x}\kappa)_e {\eta}_e^2+
 V[\xi]({\eta}_e)}
 \end{split}
\end{align}
Recall that $\bs{\rho}_e[\tau_x\xi]=\bs{\rho}_{e-x}[\xi]$.
Using this and the last display, we obtain for $\bs B\in \discsigma_\Lambda$ that
\begin{align}
\begin{split}
\discmu[\tau_x\xi] (\bs B)
&=
\int_{\bs B} \prod_{e\in \Edge(\Lambda)} \bs{\rho}_e[\tau_x \xi](\d\k_e)
\int \mu[\tau_x\xi] (\d\eta)\, e^{-\tfrac12\k_e {\eta}_e^2+ V[\tau_x\xi]({\eta}_e)}
\\
&=
\int_{\bs B} \prod_{e\in \Edge(\Lambda)} \bs{\rho}_e[\tau_x \xi](\d\k_e)
 \int \mu[\xi](\d{\eta}) \;
\prod_{e\in \Edge(\Lambda- x)} e^{-\tfrac12 (\tau_{-x}\kappa)_e {\eta}_e^2+
 V[\xi]({\eta}_e)}
 \\
 &=
 \int \prod_{e\in \Edge(\Lambda-x)} \bs{\rho}_e[ \xi](\d\bar\k_e)\,
\1_{\bs B}(\tau_x\bar\k) \int \mu[\xi](\d{\eta}) \;
\prod_{e\in \Edge(\Lambda- x)} e^{-\tfrac12 \bar\kappa_e{\eta}_e^2+
 V[\xi]({\eta}_e)}
 \\
 &=\discmu[\xi](\tau_{-x}\bs B)
\end{split}
\end{align}
where we used the substitution $\bar\k_e=\k_{e+x}$, i.e., $\bar{\k}=\tau_{-x}\k$ in the second to last step.
Using the shift invariance of the disorder, we conclude that 
the measure $\discmu^a$ is shift invariant, i.e., 
\begin{align}
\discmu^a(\tau_x\bs B)=\discmu^a(\bs B).
\end{align}
This ends the proof.

\end{proof}

Recall that we use the notation $\discmu[\xi]{\restriction}_{\Lambda}$ for the marginal distribution of $\discmu[\xi]$ on $\R^{\Edge(\Lambda)}_+$.
\begin{lemma}\label{le:gradGibbs_stoch_dom}
If $\discmu[\xi]$ is the disordered $\kappa$-marginal of a disordered Gibbs state $\mu[\xi]$ then
\begin{align}
\discmu_\Lambda^0[\xi]\precsim \discmu[\xi]{\restriction}_\Lambda \precsim \discmu_\Lambda^1[\xi].
\end{align}
\end{lemma}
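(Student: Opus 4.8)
The plan is to obtain both inequalities by conditioning the extended measure $\extmu[\xi]$ on the configuration outside $\Lambda$ through the gradient DLR equation, computing the resulting conditional law of the interior conductances $\kappa_{\Edge(\Lambda)}$, and then comparing it with the free and wired finite-volume measures by means of the boundary-condition monotonicity of Corollary \ref{co:boundary_conditions} and the tilting criterion of Corollary \ref{co:stoch_dom}. Throughout I use that stochastic domination is preserved under mixtures, so that it suffices to establish the bounds for the conditional laws and then average over the exterior.

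For the upper bound I condition on the gradient field outside $\Lambda$. By the decomposition \eqref{eq:disordered_decomp} the conditional law of $\kappa_{\Edge(\Lambda)}$ is the wired density $s_{\Lambda^w}$ reweighted by the Gaussian partition function of the field on $\overcirc\Lambda$ with the boundary $\partial\Lambda$ pinned to its values $b$; this reweighting equals $\exp(-\tfrac12 E(\kappa,b))$, where $E(\kappa,b)=\min_{\phi{\restriction}_{\partial\Lambda}=b}\sum_{e\in\Edge(\Lambda)}\kappa_e(\nabla\phi)_e^2$ is the Dirichlet energy of the harmonic extension. Since $E(\cdot,b)$ is increasing in $\kappa$, the reweighting is a decreasing function of $\kappa$, so Corollary \ref{co:stoch_dom} (the wired measure satisfies the FKG lattice condition by Corollary \ref{co:Holley1}) gives that the conditional is $\precsim\discmu_\Lambda^1[\xi]$. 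Averaging over the exterior field yields $\discmu[\xi]{\restriction}_\Lambda\precsim\discmu_\Lambda^1[\xi]$. No difficulty arises from the precise boundary profile $b$: a decreasing reweighting can only decrease the measure, which is the direction we want.

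The lower bound is the delicate point. The same conditioning exhibits the interior conductance law as the wired measure reweighted by $\exp(-\tfrac12 E(\kappa,b))$, and because this factor is decreasing it cannot, for a fixed profile $b$, dominate the free measure $s_\Lambda$ from below: a large harmonic profile across $\Lambda$ may suppress the interior conductances. To recover $\discmu_\Lambda^0[\xi]\precsim\discmu[\xi]{\restriction}_\Lambda$ I use Lemma \ref{le:cond_distr}: given the conductances the field is a \emph{centered} Gaussian. Hence, rather than conditioning on a fixed profile, one integrates the exterior field against its own law, which produces a \emph{centered} effective Gaussian boundary law $N(0,\Sigma)$ on $\p{\restriction}_{\partial\Lambda}$. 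For a centered boundary law the reweighting of the free density becomes $\det\!\big(S(\kappa)^{-1}+\Sigma\big)^{-1/2}$, where $S(\kappa)$ is the boundary-to-boundary Schur complement of $\Delta_\kappa$ (so that $\det\Delta_\kappa^{\mathrm{free}}=\det\Delta_\kappa^{\mathrm{Dir}}\det S(\kappa)$); since $S(\kappa)$ is positive-definite and increasing in each $\kappa_e$, the matrix $S(\kappa)^{-1}+\Sigma$ is decreasing and the scalar $\det\!\big(S(\kappa)^{-1}+\Sigma\big)^{-1/2}$ is increasing in $\kappa$. Corollary \ref{co:stoch_dom} then gives domination of the free measure from below, and averaging over the exterior completes the lower bound.

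The main obstacle is to make this centering rigorous in infinite volume. Concretely one works in boxes $\Lambda_N\uparrow\Z^2$, where the gradient DLR pins the field on $\partial\Lambda_N$ to the ambient configuration; the resulting harmonic profile induces a nonzero shift of the effective boundary law on $\partial\Lambda$, and one must show that this shift---and with it the suppressing reweighting---has vanishing influence on $\Edge(\Lambda)$ as $N\to\infty$. This is exactly where zero tilt enters: by the sublinearity \eqref{eq:sublinear} furnished by Lemma \ref{le:cond_distr} and Remark \ref{rmk:ergodic}, the pinned data on $\partial\Lambda_N$ grows sublinearly, so in $d=2$ its harmonic extension has asymptotically vanishing gradient across the fixed region $\Lambda$; thus the boundary law becomes centered in the limit and the free bound is recovered. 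The remaining points---justifying the infinite-volume Gaussian integration and the Schur-complement monotonicity feeding into Corollary \ref{co:stoch_dom}---are routine given Lemma \ref{le:cond_distr} and the determinant identities of Section \ref{sec:RC}.
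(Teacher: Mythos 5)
Your upper bound is correct and is essentially the paper's argument: condition on the exterior field, identify the conditional density as $s_{\Lambda^w}$ reweighted by $e^{-\frac12 E(\kappa,b)}$ with $E(\cdot,b)$ the (increasing) Dirichlet energy of the harmonic extension, and apply Corollary \ref{co:stoch_dom} together with Corollary \ref{co:Holley1} before averaging over the mixture. The lower bound, however, contains a genuine gap. The ``effective centered Gaussian boundary law $N(0,\Sigma)$'' on $\p{\restriction}_{\partial\Lambda}$ with a fixed covariance $\Sigma$ independent of the interior conductances does not exist: under the joint law $\extmu[\xi]$ the boundary field and $\kappa_{\Edge(\Lambda)}$ are correlated, and the marginal law of the boundary profile depends on the full (unknown) infinite-volume conductance distribution. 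Lemma \ref{le:cond_distr} only gives the conditional law of the field given \emph{all} conductances; it does not let you decouple $\p{\restriction}_{\partial\Lambda}$ from $\kappa_{\Edge(\Lambda)}$. Writing the marginal as $\int \nu(\d b)\,\discmu_b$ with $\discmu_b\propto e^{-\frac12\langle b,S(\kappa)b\rangle}s_{\Lambda^w}\bs\rho$ and then pulling the $b$-integral inside as $\det\bigl(S(\kappa)^{-1}+\Sigma\bigr)^{-1/2}$ requires both that $\nu$ be Gaussian and that it be independent of $\kappa_{\Edge(\Lambda)}$ (and that the $b$-dependent normalizations $Z(b)$ cancel); none of these hold, and ``integrating the exterior field against its own law'' simply reproduces $\discmu[\xi]{\restriction}_\Lambda$ without yielding the claimed factorized density. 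So the Schur-complement monotonicity, even if true, has nothing valid to act on.

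Your closing paragraph does gesture at the correct route (which is the paper's): condition on the field on $\partial\Lambda_m$ and on the conductances in the annulus, and show the harmonic correction has vanishing influence on the fixed window $\Lambda$ as $m\to\infty$, so that the conditional law is within a factor $1\pm\varepsilon$ of $\discmu^{\Lambda_m^w,\Edge(\Lambda),\bar\k}$, which dominates $\discmu^\Lambda$ by Corollary \ref{co:boundary_conditions}. But the quantitative input you propose is insufficient. For uniformly elliptic $\k\in[\lambda^{-1},\lambda]^{\Edge(\Z^d)}$ the only available interior estimate for $\Delta_\k$-harmonic functions is the De Giorgi--Nash--Moser H\"older bound $|\nabla\chi_{\Lambda_m}(e)|\leq C m^{-\alpha}\operatorname{osc}_{\partial\Lambda_m}\p$ with $\alpha=\alpha(\lambda)<1$; sublinearity \eqref{eq:sublinear} only gives $\operatorname{osc}_{\partial\Lambda_m}\p=o(m)$, hence a gradient bound $o(m^{1-\alpha})$ that does \emph{not} vanish. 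The paper instead uses the Gaussian structure of the conditional field (via Lemma \ref{le:cond_distr} and Lemma~A.1 of \cite{phasetransitionclass}) to get $\operatorname{osc}_{\partial\Lambda_m}\p\leq(\ln m)^3$ with probability $1-C/\ln m$, which makes the energy variation $O\bigl((\ln m)^6 m^{-2\alpha}\bigr)\to0$. This logarithmic oscillation bound, together with the resulting approximate (rather than exact) domination at each finite $m$, is the substance of the lower bound and cannot be dismissed as routine.
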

\begin{proof}
The idea of the  proof is the same as in the proof of Proposition 4.18 in \cite{phasetransitionclass} where similar statements were shown.
We first show the second relation which is simpler. The basic observation used in both estimates is that conditioned on the $\eta$ field outside of $\Lambda$ 
we can write down the density of the $\kappa$ field inside $\Lambda$ explicitly
and it is given as the wired conductance model modified by
a corrector term. Let us make this precise using \eqref{eq:disordered_decomp} 
which implies for the conditional distribution of the $\k$-marginal
\begin{align}
\extmu_{\eta_{\Edge(\Lambda)^\crm}}[\xi](\d\kappa )
=Z^{-1}\bs{\rho}_\Lambda[\xi](\d\kappa) \int \lambda_\Lambda^{\eta}(\d\bar\eta)
\prod_{e\in \Edge(\Lambda)}  e^{-\tfrac12 \kappa_e \bar\eta_e^ 2}
\end{align}
where $Z=Z(\eta,\xi)$ is the normalization.
We write $\bar{0}$ for the gradient field with everywhere vanishing gradients.
A simple completion of the square shows that we can decompose the integral in the last display as follows
\begin{align}
\begin{split}\label{eq:rewrite_zero_bc}
\int \lambda_\Lambda^{\eta}(\d\bar\eta)
\prod_{e\in \Edge(\Lambda)}  e^{-\tfrac12 \kappa_e \bar\eta_e^ 2}
&=
e^{-\tfrac12 \sum_{e\in \Edge(\Lambda)}\k_e|\nabla \chi_\Lambda(\k,\eta)(e)|^2}\int \lambda_\Lambda^{\bar{0}}(\d\bar\eta)
\prod_{e\in \Edge(\Lambda)}  e^{-\tfrac12 \kappa_e \bar\eta_e^ 2}
\end{split}
\end{align}
where $\chi_\Lambda(\kappa,\eta):\Lambda\to \R$ denotes the corrector, i.e., the unique
solution of the equation
\begin{align}
\Delta_\k\chi_\Lambda=0 \quad \text{in $\overcirc\Lambda$,}\qquad  \chi_\Lambda=\p\quad \text{on $\partial\Lambda$} 
\end{align} 
where $\p$ is the unique function satisfying $\nabla\p=\eta$ and $\p(0)=0$.
To simplify the notation, we introduce the shorthand
\begin{align}\label{eq:def_of_E}
W_\Lambda(\kappa,\eta)=\sum_{e\in \Edge(\Lambda)}\k_e|\nabla \chi_\Lambda(\k,\eta)(e)|^2
\end{align}
for the energy of the corrector.
Equivalently $\chi_\Lambda$ can be defined as the minimizer of the quadratic form on the right-hand side of the last equation subject to the given boundary values. 
This immediately implies that for fixed $\eta$ the function $W_\Lambda(\kappa,\eta)$ is increasing in $\k$. From \eqref{eq:rewrite_zero_bc} we infer that the distribution 
$\extmu_{\eta_{\Edge(\Lambda)^\crm}}[\xi](\d\k)$ has
density proportional to $e^{-\tfrac12 W_\Lambda(\kappa,\eta)} s_{\Lambda^w}(\kappa)$ and therefore
\begin{align}\label{eq:density_bc}
\extmu_{\eta_{\Edge(\Lambda)^\crm}}[\xi](\d\k)
=\frac{1}{Z} e^{-\tfrac12 W_\Lambda(\kappa,\eta)} \discmu_\Lambda^1[\xi](\d\kappa).
\end{align}
By Corollary \ref{co:Holley1} the FKG inequality holds for the measure $\discmu_\Lambda^1[\xi](\d\kappa)$. Since $W$ is increasing in $\k$ this implies that
$\extmu_{\eta_{\Edge(\Lambda)^\crm}}[\xi](\d\k)
\precsim \discmu^1_\Lambda[\xi]$.
This implies
\begin{align}
\discmu[\xi]{\restriction}_\Lambda \precsim \discmu_\Lambda^1[\xi]
\end{align}
because $\discmu[\xi]{\restriction}_\Lambda$ 
can be expressed as a mixture of such measures.
The second result is shown similarly but the proof is slightly more involved.
Consider $m$ such that $\Lambda\subset \Lambda_m$ and let $E=\Edge(\Lambda)$ and as before $E_m=\Edge(\Lambda_m)$.
For  $\eta\in \R^{\Edge(\Z^d)}_g$ and 
$\bar\k\in \R_+^{\Edge(\Z^d)}$ we consider the measure
$\extmu_{\eta_{E_m^\crm}, \bar\k_{E^\crm}}[\xi](\d\k).
$
From \eqref{eq:density_bc} we conclude that this measure can be expressed as
\begin{align}
\begin{split}\label{eq:dens_subset}
\extmu_{\eta_{E_m^\crm}, \bar\k_{E^\crm}}[\xi](\d\k)
&= \frac{1}{Z}
e^{-\tfrac12 W_{\Lambda_m}((\k_{E},\bar\k_{E^\crm}), \eta)}
s_{\Lambda_m^w}((\k_E,\bar\k_{E^\crm}))\,\bs\rho_{\Lambda_m}[\xi](\d\k)
\\
&=
\frac{1}{Z'} e^{-\tfrac12 W_{\Lambda_m}((\k_E,\bar\k_{E^\crm}), \eta)}
\discmu^{\Lambda_m, E, \bar\kappa}[\xi](\d\k).
\end{split}
\end{align}
By the Giorgi-Nash-Moser theorem the corrector $\chi_{\Lambda_m}$ is Hölder continuous, i.e., there are constants $\alpha=\alpha(\lambda)>0$ and $C=C(\lambda)>0$ such that
for $\k\in [\lambda^{-1},\lambda]^{\Edge(\Z^d)}$ and $e\in \Edge(\Lambda_{m/2})$
\begin{align}\label{eq:hoelder}
|\nabla\chi_{\Lambda_m}(\k,\eta)(e)|\leq \frac{C}{m^\alpha}\,\sup_{x,y\in \partial\Lambda_m} (\p(x)-\p(y)).
\end{align}
The well-known theory for continuous problems was extended to the discrete case 
in \cite[Proposition 6.2]{MR1425544}.
Using that the corrector minimizes the quadratic form in \eqref{eq:def_of_E}
we bound for  $ \tilde{\k},\k\in [\lambda^{-1},\lambda]^{\Edge(\Z^d)}$
such that $\kappa_e=\tilde{\k}_e$ and $e\notin \Edge(\Lambda) $
\begin{align}\label{eq:corrector_diff}
\begin{split}
W_{\Lambda_m}(\k, \eta)
&\leq  \sum_{e\in \Edge(\Lambda_m)}
{\k}_e |\nabla\chi_{\Lambda_m}(\tilde{\k},\eta)(e)|^2
\\ &
\leq 
\sum_{e\in \Edge(\Lambda_m)}
\tilde{\k}_e |\nabla\chi_{\Lambda_m}(\tilde{\k},\eta)(e)|^2
+  \lambda|\Edge(\Lambda)| \sup_{e\in \Edge(\Lambda)}
|\nabla \chi_{\Lambda_m}(\tilde{\k},\eta)(e)|^2.
\end{split}
\end{align}
We now assume that $m$ is sufficiently large such that $\Lambda\subset \Lambda_{m/2}$.
Then we can bound for $  \bar{\k}, \tilde{\k},\k\in [\lambda^{-1},\lambda]^{\Edge(\Z^d)}$ and $\alpha$ and $C$ as above
\begin{align}
|W_{\Lambda_m}((\k_E,\bar\k_{E^\crm}), \eta)
-W_{\Lambda_m}((\k_E,\tilde\k_{E^\crm}), \eta)|
\leq \frac{C\lambda|\Edge(\Lambda)|}{m^{2\alpha}}\left( \sup_{x,y\in \partial\Lambda_m} (\p(x)-\p(y))\right)^2.
\end{align}
 We define the event 
 \begin{align}
 \bs{M}(m)=\left\{\eta\in \R_g^{\Edge(\Z^d)}\,:\,\sup_{x,y\in \partial\Lambda_m} (\p(x)-\p(y))\leq  \ln(m)^3\right\}.
 \end{align}
Lemma~A.1 in \cite{phasetransitionclass} together with Lemma~\ref{le:cond_distr}
imply that for almost all $\xi$
 \begin{align}\label{eq:bound_M}
 \mu[\xi](\bs{M}(m))\geq 1- C \ln(m)^{-1}.
 \end{align} 
 Note that the setting in \cite{phasetransitionclass} 
 is actually restricted to $\k\in \{1,q\}^{\Edge(\Z^d)}$ 
 but the only thing that is really needed is
 that the conductances are bounded below. 
We conclude that for given $\varepsilon$ and   $m\geq m_0(\varepsilon,\lambda)$ sufficiently large, $\eta\in \bs{M}(m)$  uniformly in $\k,\tilde\k,\bar\k\in [\lambda^{-1},\lambda]^{\Edge(\Z^d)}$
\begin{align}
\left|1-e^{W_{\Lambda_m}((\k_E,\bar\k_{E^\crm}), \eta)
-W_{\Lambda_m}((\k_E,\tilde\k_{E^\crm}), \eta)}\right|<\varepsilon.
\end{align}
 Using this estimate in \eqref{eq:dens_subset} we conclude that for any increasing event $\bs{B}\in \discsigma_\Lambda$, $m\geq m_0$, and  $\eta\in \bs{M}(m)$ 
 \begin{align}
 \extmu_{\eta_{E_m^\crm}, \bar\k_{E^\crm}}[\xi](\d\k)
 \geq (1-2\varepsilon)\discmu^{\Lambda_m^w, E, \kappa}[\xi](\bs{B})
 \geq (1-2\varepsilon)\discmu^{\Lambda}(\bs{B})
 \end{align}
 where we used Corollary~\ref{co:boundary_conditions} in the last step.
 We conclude that
 \begin{align}
 \discmu[\xi](\bs{B})\geq (1-2\varepsilon)\discmu^{\Lambda}[\xi](\bs{B})
 - \mu[\xi](\bs{M}(m))
 \end{align}
Sending $m\to \infty$ and $\varepsilon\to 0$ implies together with 
 \eqref{eq:bound_M} that
 \begin{align}
  \discmu[\xi](\bs{B})\geq \discmu^{\Lambda}[\xi](\bs{B}).
 \end{align}
 This ends the proof.

\end{proof}

\begin{proof}[Proof of Theorem \ref{th:main} (Uniqueness)]
Here we prove that there is at most one gradient Gibbs measure satisfying the conditions of the theorem.
Let $\mu$ be a gradient Gibbs measure as in the statement of the theorem.
Then Lemma~\ref{le:gradGibbs_stoch_dom} and a limiting argument show that the $\kappa$-marginal
$\discmu[\xi]$ of the extended gradient Gibbs measure $\extmu[\xi]$ satisfy for almost all $\xi$ the relation $\discmu^0[\xi]\precsim \discmu[\xi] \precsim \discmu^1[\xi]$.
We have shown in Theorem~\ref{th:discrete_disorder} that
for almost all $\xi$ the measures $\discmu^1[\xi]=\discmu^0[\xi]$ agree.
Therefore we can conclude that for almost all $\xi$
the equality $\discmu[\xi]=\discmu^1[\xi]=\discmu^0[\xi]$ holds.
From Lemma~\ref{le:cond_distr} we conclude that the measure $\discmu[\xi]$ determines
$\mu[\xi]$ for almost all $\xi$. This implies uniqueness of the shift covariant measure $\mu$.
\end{proof}
%
It remains to prove the simpler existence part of the theorem. Note that essentially 
existence was sketched in \cite{MR3383338} (see Theorem~3.1 and Remark~3.2 (c) there, their argument is actually restricted to zero-tilt). 
For completeness, we give a self-contained similar proof of the existence result. 
\begin{proof}[Proof of Theorem \ref{th:main} (Existence)]
We consider the following sequence $\extmu_{\Lambda_m}[\xi]$ of measures on 
the space $\R_g^{\Edge(\Z^d)}\times \R_+^{\Edge(\Z^d)}$ given by
\begin{align}
\extmu_{\Lambda_m}[\xi](\d\kappa, \d\eta)
=
\frac{1}{Z} \bs{\rho}[\xi](\d\kappa) \lambda^{\bar{0}}_{\Lambda_m}(\d\eta)\, e^{-\tfrac12 \sum_{e\in \Edge(\Lambda_m)} \k_e\eta_e^2}.
\end{align}
As before, we denote the $\kappa$-marginal of this measure by $\discmu_{\Lambda_m}[\xi]$ and the $\eta$-marginal by $\mu_{\Lambda_m}[\xi]$.
One easily sees that the $\k$-marginal $\discmu_{\Lambda_m}[\xi]$ is given by $Z^{-1}s_{\Lambda^w_m}(\k)\bs{\rho}[\xi]$
while the $\eta$-marginal is given by $\mu^{\bar{0}}_{\Lambda_m}[\xi]$ since 
\begin{align}
\int \bs{\rho}_e[\xi](\d\k)\, e^{-\tfrac12 \k_e\eta_e^2}=e^{-V_e[\xi](\eta_e)}.
\end{align}
Finally we notice that conditioned on $\k$ the gradient field is a centered Gaussian  field with covariance
$(\Delta_\k^{\Lambda^w})^{-1}$.
Taking $\Lambda \to \Z^d$ we observe that the $\kappa$-marginal converges to $\discmu^1[\xi]$.
Thus $\extmu_\Lambda[\xi]$ converges to 
a limiting measure $\extmu[\xi]$ where, given $\k$, the conditional distribution of $\eta$ is centered Gaussian
with covariance $(\Delta_\k)^{-1}$. We denote the law of this Gaussian field by $\mu_\k$.
 Since $\mu_{\Lambda_m}[\xi]$
 is a finite volume Gibbs measure,  a limiting argument shows that the $\eta$-marginal $\mu[\xi]$ of $\extmu[\xi]$ is an infinite volume gradient Gibbs measure.
We now prove that the map $\xi\to \mu[\xi]$ constitutes a disordered gradient Gibbs measure having the desired properties.

The moment condition \eqref{eq:cond_moment} is satisfied by the Brascamp-Lieb inequality, which bounds the moment generating function of the measures $\mu_\k$ uniformly in $\k\in [\lambda^{-1},\lambda]^{\Edge(\Z^d)}$.
Concerning the tilt we notice that already on the level of the quenched measures the tilt vanishes, i.e.,
$\extmu[\xi](\eta_e)=0$ for all $e\in \Edge(\Z^d)$.
Next we show the shift covariance. 
First we rewrite using disintegration that 
 \begin{align}
 \int \extmu[\xi](\d\eta,\d\kappa) \, F(\eta)
 =
 \int \discmu^1[\xi](\d\k) \int \mu_\kappa(\d\eta)\,  F(\eta).
 \end{align}
 We introduce the notation $G(\k)=\int \mu_\k(\d\eta) F(\eta)$.
 The property  $\mu_{tau_x\k} =(\tau_x)_\ast \eta_\k$ implies that
 \begin{align}
 G(\tau_x\k)=\int  \mu_{\tau_x\k}(\d\eta) \, F(\eta)
 =\int \mu_{\k}(\d\eta) F(\tau_x\eta).
 \end{align}
The last two displays and the shift covariance  \eqref{eq:discmucov1}  of $\discmu^1$ imply 
 \begin{align}
 \begin{split}
 \int \extmu[\tau_x\xi](\d\eta,\d\kappa)\, F(\eta)
 =
 \int \discmu^1[\tau_x\xi](\d\k)\, G(\k)
&=
 \int \discmu^1[\xi](\d\k)\, G(\tau_x\k)
 \\ &
 =
 \int \extmu[\xi](\d\eta,\d\kappa) F(\tau_x\eta).
 \end{split}
 \end{align}
It remains to prove that the annealed measure $\mu^a=\E(\mu[\xi])$ is ergodic. 
Here we rely on the uniqueness result proved before. This essentially repeats the proof of Theorem~4.5 in \cite{MR3383338}.
Suppose that the annealed measure $\mu^a$ is not ergodic. Then we can find a shift invariant event $\bs{A}\in \gradsigma$ such that $0<c=\E\left(\mu[\xi](\bs{A})\right)<1$.
Since we know that $\xi\to\mu[\xi]$ is shift covariant, we conclude from the shift invariance of $\bs{A}$ that
$\xi\to \mu[\xi](\bs{A})$ is a shift invariant function. Ergodicity of $\xi$ implies that $\mu[\xi](\bs{A})$ is almost surely constant and equal to $c$. We can
now consider the measures $\xi\to\mu_{\bs{A}}[\xi]$
and $\xi\to\mu_{\bs{A}^\crm}[\xi]$ which are defined by 
\begin{align}
\mu_{\bs{A}}[\xi](\bs{B})=\frac{\mu[\xi](\bs{A}\cap\bs{B})}{c},
\qquad 
\mu_{\bs{A}^\crm}[\xi](\bs{B})=\frac{\mu[\xi](\bs{A}^\crm\cap\bs{B})}{1-c}.
\end{align}   
This defines for almost all $\xi$ a probability measure and
they are distinct disordered gradient Gibbs measures 
by Theorem~7.7(b) and Proposition~14.9 in \cite{MR2807681}.
Shift covariance follows from 
the shift covariance of $\mu[\xi]$ and shift invariance of $\bs{A}$ since
\begin{align}
\mu_{\bs{A}}[\tau_x\xi](F)=\frac{\mu[\tau_x\xi](\mathds{1}_{\bs{A}}F)}{c}
=\frac{\mu[\xi]((\mathds{1}_{\bs{A}}F)\circ \tau_x)}{c}
=\frac{\mu[\xi](\mathds{1}_{{\tau_{-x}\bs{A}}}\,F\circ \tau_x)}{c}
=\mu_{\bs{A}}[\tau_x\xi](F\circ \tau_x).
\end{align}
Note that the definition of $\mu[\xi]$ 
which is a mixture of fields $\mu_\k$ and the Brascamp-Lieb inequality together with a Borel-Cantelli argument imply that 
for $\mu^a$ almost all $\eta$ the bound \eqref{eq:sublinear} holds.
Therefore the uniqueness part of the theorem and Remark~\ref{rmk:ergodic}
imply that $\mu_{\bs{A}}[\xi]=\mu_{\bs{A}^\crm}[\xi]$ for almost all $\xi$. This is a contradiction and we conclude that the annealed measure $\mu^a$ is ergodic.
\end{proof}
\begin{remark}
Actually, it can also directly be shown that the annealed measure is ergodic which yields a proof of existence in arbitrary dimension. 
For this one first shows that the annealed law $\discmu^{1,a}=\E(\discmu^1[\xi])$ is mixing. This can be shown similarly to the case without disorder using the correlation inequalities and the independence of $\xi_e$. Then it remains to show that
$\int \discmu^{1,a}(\d\k)\, \mu_\k$ is ergodic which follows from the decay of correlations of $\mu_\k$. 
\end{remark}

\paragraph{Acknowledgments}
SB has been supported by the 
Deutsche Forschungsgemeinschaft (DFG, German Research Foundation) 
through the Hausdorff Center for Mathematics
(GZ EXC 59 and 2047/1, Projekt-ID 3906$ $85813) and the collaborative research centre
'The mathematics of emerging effects'
(CRC 1060, Projekt-ID   2115$ $04053 ). CC has been supported by the EPSRC grant EP/M027694/1.
CC and SB would like to thank the Isaac Newton Institute for Mathematical Sciences for support and hospitality during the programme \emph{Scaling limits, rough paths, quantum field theory} (supported by EPSRC Grant Number EP/R014604/1) where this project was initiated.

\bibliographystyle{amsplain} 
\bibliography{./New_PHD.bib}

\end{document}